\documentclass[oneside,11pt]{amsart}
\usepackage[utf8]{inputenc}
\usepackage[T1]{fontenc}
\usepackage{amsmath, mathrsfs, bbm}
\usepackage{graphicx}
\usepackage[T1]{fontenc}
\usepackage{combelow}

\usepackage[small]{caption}

\usepackage{tikz-cd}
\usetikzlibrary{arrows,chains,matrix,positioning,scopes}

\makeatletter
\tikzset{join/.code=\tikzset{after node path={%
\ifx\tikzchainprevious\pgfutil@empty\else(\tikzchainprevious)%
edge[every join]#1(\tikzchaincurrent)\fi}}}
\makeatother
\tikzset{>=stealth',every on chain/.append style={join},
         every join/.style={->}}
\tikzstyle{labeled}=[execute at begin node=$\scriptstyle,
   execute at end node=$]



\usepackage{amsfonts}
\usepackage{amssymb}
\usepackage{amsxtra}

\usepackage[all]{xy}
\SelectTips{cm}{12}
\CompileMatrices


\usepackage{ifthen}

\newcommand{\showcomments}{yes}

\newsavebox{\commentbox}
%
{\ifthenelse{\equal{\showcomments}{yes}}%
{\footnotemark
    \begin{lrbox}{\commentbox}
    \begin{minipage}[t]{1.25in}\raggedright\sffamily\upshape\tiny
    \footnotemark[\arabic{footnote}]}
{\begin{lrbox}{\commentbox}}}%
{\ifthenelse{\equal{\showcomments}{yes}}%
{\end{minipage}\end{lrbox}\marginpar{\usebox{\commentbox}}}
{\end{lrbox}}}




\theoremstyle{plain}
\newtheorem{theorem}{Theorem}[section]
\newtheorem{corollary}[theorem]{Corollary}
\newtheorem{lemma}[theorem]{Lemma}
\newtheorem{proposition}[theorem]{Proposition}

\theoremstyle{definition}
\newtheorem{defn}[theorem]{Definition}

\newtheorem{definition}[theorem]{Definition}
\newtheorem{remark}[theorem]{Remark}

\newcommand{\bbp}{\mathbb{P}}
\newcommand{\bndry}{\partial}

\newcommand{\R}{\mathbb{R}}
\newcommand{\N}{\mathbb{N}}

\newcommand{\relbndry}{\bndry (G,\bbp)}

\newcommand{\boundary}{\partial}


\DeclareMathOperator{\Aut}{Aut}

\DeclareMathOperator{\Stab}{Stab}
\DeclareMathOperator{\Fix}{Fix}

\DeclareMathOperator{\Ends}{Ends}


\DeclareMathOperator{\Inc}{Inc}

\newcommand{\presentation}[2]{\langle\, {#1} \mid {#2} \,\rangle}

\newcommand{\bigset}[2]{ \bigl\{ \, {#1} \bigm| {#2} \, \bigr\} }

\renewcommand{\emptyset}{\varnothing}
\renewcommand{\setminus}{-}

\newcommand{\field}[1]{\mathbb{#1}}

\newcommand{\Q}{\field{Q}}

\renewcommand{\P}{\field{P}}

\newcommand{\of}{\circ}



\usepackage{combelow}

\newcommand{\Swiatkowski}{{\'{S}}wi{\k{a}}tkowski}
\newcommand{\Sierpinski}{Sierpi{\'n}ski}

\hyphenation{geo-desic geo-des-ics quasi-convex quasi-convex-ity
             quasi-geo-desic quasi-geo-des-ics
             state-ment prop-o-si-tion equi-vari-ant equi-vari-antly
             pa-ram-e-trized Rie-mann-ian
             Bow-ditch}


\begin{document}
\title[Local connectedness of the Bowditch boundary]{Local connectedness of boundaries for relatively hyperbolic groups}

\author{Ashani Dasgupta}
\address{Faculty of Mathematics\\
Cheenta Academy\\
22 Lake Place Road\\
Kolkata 700029\\
India}
\email{ashani.dasgupta@cheenta.com}

\author{G. Christopher Hruska}
\address{Department of Mathematical Sciences\\
University of Wisconsin--Milwaukee\\
P.O. Box 413\\
Milwaukee, WI 53201\\
USA}
\email{chruska@uwm.edu}

\subjclass[2020]{%
20F67 
20E08} 

\date{\today}

\begin{abstract}
Let $(\Gamma,\P)$ be a relatively hyperbolic group pair that is relatively one ended.
Then the Bowditch boundary of $(\Gamma,\P)$ is locally connected.
Bowditch previously established this conclusion under the additional assumption that all peripheral subgroups are finitely presented, either one or two ended, and contain no infinite torsion subgroups.
We remove these restrictions; we make no restriction on the cardinality of $\Gamma$ and no restriction on the peripheral subgroups $P \in \P$.
\end{abstract}

\maketitle

\section{Introduction}
\label{sec:Introduction}

One of the earliest major theorems proved about hyperbolic groups is the result that the Gromov boundary of a one-ended hyperbolic group is locally connected and without cut points.  The proof combines work of Bestvina--Mess, Levitt, Bowditch, and Swarup \cite{BestvinaMess91,Levitt98,Bowditch99_Treelike,Swarup96}.
Notable applications of this result include semistability (due to \mbox{Geoghegan} \cite{BestvinaMess91,GeogheganSwenson19}), the topological invariance of the JSJ decomposition \cite{Bowditch98_JSJ}, the one-dimensional boundary classification theorem of \cite{KapovichKleinier00}, and the existence of quasi-hyperbolic planes in hyperbolic groups \cite{BonkKleiner05}.

Subsequently Bowditch studied the local connectedness of boundaries of relatively hyperbolic groups. 
Bowditch introduces a natural boundary of a relatively hyperbolic pair, the Bowditch boundary, and establishes many of its fundamental properties in \cite{Bowditch12_RelHyp}.
In a series of articles \cite{Bowditch99_Boundaries,Bowditch99_Connectedness,Bowditch01_Peripheral}, he proves that the Bowditch boundary of a relatively hyperbolic group pair $(\Gamma,\P)$ is locally connected, if certain restrictions are imposed on the subgroups known as peripheral subgroups.  These restrictions require peripheral subgroups to be finitely presented, either one or two ended, and to contain no infinite torsion subgroups. In this paper we extend Bowditch's work and remove all restrictions, proving the following theorem. 

\begin{theorem}
\label{thm: intro main}
Let $(\Gamma,\P)$ be a relatively hyperbolic group pair with connected boundary $M=\boundary(\Gamma,\P)$.
Then $M$ is locally connected and every cut point of $M$ is the fixed point of a parabolic subgroup.

The boundary $M$ has a cut point with stabilizer $P \in \P$ if and only if $\Gamma$ splits relative to $\P$ over a subgroup of $P$.
\end{theorem}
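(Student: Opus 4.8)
The plan is to treat the two implications separately.

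\emph{Splitting $\Rightarrow$ cut point.} Suppose $\Gamma$ splits relative to $\P$ over a subgroup $H$ of some $P\in\P$, and fix a reduced nontrivial such splitting with Bass--Serre tree $T$; since the splitting is relative to $\P$, each member of $\P$ is elliptic. Here $H$ cannot be finite: a nontrivial splitting over a finite subgroup would give $(\Gamma,\P)$ more than one relative end, contradicting connectedness of $M$. So $H$ is an infinite subgroup of the parabolic group $P$, hence has limit set the single parabolic point $p=\Fix(P)$. To see that $p$ is a cut point, pick an edge $e$ of $T$ with $\Gamma_e$ conjugate to $H$ and, after translating, fixing $p$; removing the midpoint of $e$ splits $T$ into subtrees $T_1,T_2$, and let $M_i$ be the closure of the union of the limit sets of the vertex stabilizers lying in $T_i$. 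From the description of the Bowditch boundary of a graph of relatively hyperbolic groups (Dahmani's combination theorem), every point of $M$ other than $p$ lies on a definite side of $e$, so $M=M_1\cup M_2$, while a point common to limit sets on the two sides of $e$ must lie in $\Lambda\Gamma_e$, so $M_1\cap M_2=\{p\}$; reducedness and nontriviality force $M_i\setminus\{p\}\ne\emptyset$. Hence $M\setminus\{p\}$ is disconnected and $p$ is a cut point.

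\emph{Cut point $\Rightarrow$ splitting.} Suppose $M$ has a cut point. By the first part of the theorem $M$ is locally connected and every cut point of $M$ is the fixed point of a parabolic subgroup, so $M$ is a Peano continuum on which $\Gamma$ acts as a geometrically finite convergence group. I would then recover the cut-point structure as a tree, following Bowditch's treatment of canonical and peripheral splittings: the global cut points of $M$ together with the maximal subsets that no cut point separates (the inseparable pieces) form a $\Gamma$-invariant pretree, hence an $\R$-tree $\mathcal{T}$ with an isometric $\Gamma$-action whose arc stabilizers fix a cut point of $M$ and whose vertex stabilizers either fix a cut point or stabilize an inseparable piece. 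Each $Q\in\P$ is elliptic in $\mathcal{T}$, since its parabolic fixed point $q$ is either a cut point (a vertex of $\mathcal{T}$) or lies in a unique inseparable piece (again a vertex), fixed by $Q$ in either case; and because the $\Gamma$-action on $M$ is minimal with $M$ not a point, $\Gamma$ fixes no point of $\mathcal{T}$, so the action is nontrivial. Every arc stabilizer, fixing a cut point, is a subgroup of a conjugate of a member of $\P$.

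It remains to convert the action on $\mathcal{T}$ into an honest simplicial splitting, and this is where I expect the real work to be. In the absolute hyperbolic case this step is carried out by the Rips machine, which one typically invokes only under finiteness hypotheses on the group; since the present theorem allows $\Gamma$ of arbitrary cardinality, that tool is not available. My proposal is to sidestep it by using local connectedness of $M$ directly: local connectedness rules out the dense branching that would force $\mathcal{T}$ to be a genuine $\R$-tree, so that the inseparable pieces and cut points in fact assemble into a \emph{simplicial} $\Gamma$-tree $T$, and the desired nontrivial splitting of $\Gamma$ relative to $\P$, over a subgroup of a member of $\P$, is then read off by Bass--Serre theory. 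The main obstacle, then, is precisely this discretization step and the proof that local connectedness guarantees it with no restriction on $\Gamma$ or on the peripheral subgroups; the convergence-group bookkeeping above --- ellipticity of peripherals, minimality, the identity $M_1\cap M_2=\{p\}$ --- is by comparison routine.
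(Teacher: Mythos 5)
There is a genuine gap, and it is the main one: your proposal never proves the first (and principal) assertion of the theorem --- that $M$ is locally connected and that every cut point is parabolic --- yet your ``cut point $\Rightarrow$ splitting'' argument begins by invoking exactly that statement, so the proposal is circular precisely where the paper does its real work. In the paper this first assertion is the content of Sections \ref{sec:TreesOfCompacta}--\ref{sec:MainThm}: one first passes to the canonical JSJ tree of cylinders over parabolic subgroups (Proposition~\ref{prop:JSJfinitelygenerated}), whose rigid vertex pairs $(\Gamma_v,\Q_v)$ are \emph{finitely generated} and admit no splitting over finite or parabolic subgroups; one shows these rigid boundaries are Peano continua without cut points (Propositions \ref{prop:NoSplitCutPoint} and~\ref{prop:PeanoNoCutPoint}, via the dendrite quotient of Section~\ref{sec:  minimalcodense} and Bowditch's separating-horoball criterion); and one reassembles $M$ as the inverse limit of a tree system with monotone bonding maps, so that local connectedness follows from Capel's theorem (Proposition~\ref{prop:TreeOfSpaces} and Theorem~\ref{thm: localconnectednessInfiniteCase}). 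None of this appears in your outline, and the ``cut point iff splitting'' equivalence you do address falls out of that structure almost for free ($M$ has a cut point iff $T_c$ is nontrivial).

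Within the part you do attempt, the step you yourself flag as ``the real work'' --- turning the cut-point structure into a simplicial splitting --- is not merely unfinished; the proposed sidestep would not work as stated. Local connectedness of $M$ does not discretize the cut-point pretree: the dual object is in general a genuine dendrite (Definition~\ref{def:DendriteQuotient}), and passing from a non-nesting topological action to an isometric action, and thence to a simplicial splitting, is exactly where Rips-type machinery enters. The paper's resolution of the cardinality issue is the opposite of avoiding that machinery: after the JSJ reduction, the relevant groups do not split over finite or parabolic subgroups relative to their peripherals, so by Osin's theorem (Theorem~\ref{thm:OsinFinGen}) they are finitely generated and relatively finitely presented, and Levitt's theorem is extended to this relative setting (Proposition~\ref{prop:Levitt}, Corollary~\ref{cor: relativerips thm}) and combined with an accessibility result over loxodromic subgroups (Proposition~\ref{prop:Accessibility}). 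Finally, your ``splitting $\Rightarrow$ cut point'' half leans on Dahmani's combination theorem, which is available only in the finitely generated setting; the paper replaces it by the tree-system/halfspace description of $\boundary(\Gamma,\P)$ in Section~\ref{sec:TreesOfCompacta}, which is what makes the statement valid with no restriction on the cardinality of $\Gamma$ or on the peripheral subgroups.
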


A splitting as a graph of groups is said to be \emph{relative to $\mathbb{P}$} if every member of $\mathbb{P}$ is contained in a vertex group.

We note that this theorem is not restricted to finitely generated groups and imposes no restrictions on the peripheral subgroups.
Osin extends the notion of relative hyperbolicity to groups that need not be finitely generated in \cite{Osin06_RelHyp}.  
The construction and basic properties of the Bowditch boundary have been extended to this more general setting by Gerasimov and Potyagailo \cite{Gerasimov12,GerasimovPotyagailo15,GerasimovPotyagailo16}.
The boundary of a relatively hyperbolic group pair is always compact, and when $\Gamma$ is countable it is metrizable.
Thus, in the countable case, a connected Bowditch boundary is a Peano continuum.

We note that, throughout this paper, we consider only infinite peripheral subgroups.  (If any peripheral subgroup is finite, the boundary contains a dense set of isolated points and, hence, is not connected and also not locally connected.)
Under this convention, the boundary of $(\Gamma,\P)$ is connected if and only if $\Gamma$ does not split over a finite subgroup relative to $\P$.

In the case that the boundary of $(\Gamma,\P)$ is connected and contains cut points, the following result gives that the cut point tree in the sense of \cite{PapasogluSwenson06} is simplicial and provides a topological characterization of the boundary as the inverse limit of a tree of compacta in the sense of \Swiatkowski\ \cite{Swiatkowski20}.

\begin{theorem}[Cut point tree]
\label{thm: intro cut point}
Suppose $(\Gamma,\P)$ is relatively hyperbolic with connected boundary $M=\boundary(\Gamma,\P)$.
Consider the bipartite graph $T$ with vertex set $V_0\sqcup V_1$, where $V_0$ is the family of nontrivial maximal subcontinua of $M$ that are not separated by any cut point of $M$ and $V_1$ is the set of cut points of $M$. Vertices $v\in V_1$ and $B \in V_0$ are joined by an edge if $v \in B$. Then we have the following.
\begin{enumerate}
   \item The graph $T$ is a simplicial tree equal to the canonical JSJ tree of cylinders for splittings over parabolic subgroups relative to $\P$.
   \item Each group $\Stab(B)$ with $B\in V_0$ is finitely generated and relatively hyperbolic with boundary $B$, a Peano continuum with no cut points.
   \item The boundary $M$ is equal to the inverse limit of a tree of compacta, obtained by gluing the boundaries $B$ of the subgroups $\Stab(B)$ along points in the pattern of $T$.
\end{enumerate}
\end{theorem}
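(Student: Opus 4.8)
\emph{Strategy and construction of $T$.}
The plan is to read $T$ off the cut-point structure of $M$ using Theorem~\ref{thm: intro main}, to recognize it as the tree of cylinders of the JSJ over parabolics (hence canonical), to analyze the vertex groups $\Stab(B)$ via their convergence actions on the pieces $B$, and finally to repackage the decomposition of $M$ as a \Swiatkowski\ tree of compacta. By Theorem~\ref{thm: intro main}, $M$ is a locally connected continuum whose cut points are exactly the parabolic fixed points, so the treelike-structure machinery of Bowditch \cite{Bowditch99_Treelike} and Swenson converts the relation ``separated by a cut point of $M$'' into a pretree whose associated tree is precisely the bipartite graph $T$ of the statement, with $\Gamma\lacton M$ inducing a simplicial action $\Gamma\lacton T$. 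Since there are only countably many conjugacy classes of maximal parabolic subgroups and each fixes a single point of $M$, the vertex set $V_1$ meets finitely many $\Gamma$-orbits, with parabolic stabilizers; this forces $T$ to be a simplicial tree rather than a general $\R$-tree, with finitely many orbits of edges. Every edge $\{v,B\}$ has stabilizer inside the parabolic subgroup $\Stab(v)$, so $T$ is a splitting of $\Gamma$ over parabolic subgroups, and it is relative to $\P$ since each $P\in\P$ fixes its parabolic point, which lies in one vertex of $T$; thus $P$ is elliptic.

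\emph{Identification with the JSJ (the main obstacle).}
In the Guirardel--Levitt framework I would show that $T$ dominates every splitting of $\Gamma$ over a parabolic subgroup relative to $\P$, using the dynamical principle that the fixed point of an edge group of such a splitting is a cut point of $M$ separating the limit sets of the two sides, hence already recorded in the cut-point pretree. Since the cylinder equivalence --- two parabolic edge groups are equivalent precisely when they lie in a common maximal parabolic --- has the maximal parabolics for its cylinders, the tree of cylinders attached to the deformation space of $T$ is exactly the bipartition $V_0\sqcup V_1$; this identifies $T$ with the canonical JSJ tree of cylinders and completes~(1). The delicate part, which I expect to be the chief obstacle, is to run this argument with no finiteness hypotheses on the members of $\P$, and to extract along the way the relative accessibility statement needed for the finite-generation claim in~(2).

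\emph{Vertex groups.}
Fix $B\in V_0$ and set $G_B=\Stab(B)$. Restricting $\Gamma\lacton M$ gives a minimal convergence action of $G_B$ on $B$ with limit set $B$, so by Bowditch's convergence-group criterion for relative hyperbolicity \cite{Bowditch12_RelHyp}, in the generality of Gerasimov--Potyagailo \cite{GerasimovPotyagailo15}, $G_B$ is relatively hyperbolic with $\boundary G_B=B$, its peripheral structure consisting of the $G_B$-stabilizers of the cut points of $M$ lying in $B$ together with the induced parabolics. The piece $B$ has no cut point: a cut point of $B$ is either a cut point of $M$ inside $B$ --- impossible, as $B$ is not separated by any cut point of $M$ --- or it yields a relative splitting of $G_B$ invisible to $T$, contradicting the JSJ property. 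Finite generation of $G_B$ follows from relative accessibility: a rigid vertex group of the tree of cylinders is finitely generated relative to its finitely many incident parabolic edge groups, and having no cut point in its boundary it is relatively one ended, hence finitely generated. Local connectedness of $B$ descends from that of $M$ (or apply Theorem~\ref{thm: intro main} to $G_B$), so $B$ is a Peano continuum without cut points, giving~(2).

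\emph{Tree of compacta.}
For~(3), assemble the \Swiatkowski\ tree system over $T$ by attaching the compactum $B$ to each $B\in V_0$, the singleton $\{v\}$ to each $v\in V_1$, and the pattern in which adjacent pieces $B,B'$ meet exactly in the cut point $v$ between them, so $B\cap B'=\{v\}$. The tree-system axioms follow from local connectedness of $M$, which makes the cut points a null family of separating points. The tautological map from $M$ to the inverse limit, sending $\xi$ to its coherent family of ``sides'' over the cut points, is then a continuous bijection between compacta, hence a homeomorphism \cite{Swiatkowski20}, exhibiting $M$ as the stated inverse limit. This last step is bookkeeping once the previous ones are in place.
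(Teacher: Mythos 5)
Your proposal runs in the opposite direction from the paper: you try to recover the group theory from the topology of $M$ (build a cut-point tree from $M$, identify it with the JSJ, then recognize $\Stab(B)$ as relatively hyperbolic with boundary $B$), whereas the paper first builds the canonical JSJ tree of cylinders $T_c$ group-theoretically (Proposition~\ref{prop:JSJfinitelygenerated}), proves that $M$ is the limit of the tree system over $T_c$ (Proposition~\ref{prop:TreeOfSpaces}) with vertex spaces $\boundary(\Gamma_v,\Q_v)$ that are Peano continua without cut points (Proposition~\ref{prop:PeanoNoCutPoint}), and only then reads off that the topologically defined bipartite graph $T$ coincides with $T_c$. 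In your direction there are two genuine gaps. First, for item~(2) you deduce relative hyperbolicity of $G_B=\Stab(B)$ with boundary $B$ from the fact that $G_B$ has a ``minimal convergence action'' on $B$; that is not sufficient. The Bowditch--Yaman/Gerasimov--Potyagailo characterization requires the action to be geometrically finite (every point of $B$ conical or bounded parabolic \emph{for the subgroup} $G_B$), and this is exactly the substantive point: a subgroup acting on its limit set need not act geometrically finitely. In effect you need relative quasiconvexity of $\Stab(B)$, which the paper gets from the splitting via Bigdely--Wise (Proposition~\ref{prop: RelQCVertex}), not from the topological definition of $B$; your argument as written is circular at this point, and the finite-generation claim likewise leans on JSJ rigidity plus Theorem~\ref{thm:OsinFinGen} rather than on ``relatively one ended, hence finitely generated'' alone.

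Second, the identification of $T$ with the canonical JSJ tree of cylinders in item~(1) is asserted, not proved. You still owe: discreteness of the cut-point pretree (finitely many $\Gamma$--orbits of parabolic cut points does not by itself force the pretree dual to $C$ to be discrete, nor give finitely many orbits of edges or cocompactness of $\Gamma\curvearrowright T$); the statement that every splitting of $\Gamma$ over a parabolic subgroup relative to $\P$ has its edge-group fixed point appearing as a cut point separating the two sides (this is essentially the content of Section~\ref{sec:TreesOfCompacta} applied to an arbitrary parabolic splitting, and it needs the finitely generated edge-stabilizer hypotheses handled there); and the verification that $T$ equals the tree of cylinders of its own deformation space. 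You correctly flag this as ``the chief obstacle,'' but the proposal does not resolve it, whereas the paper sidesteps the whole issue by constructing $T_c$ first and exhibiting $M$ as $\lim\Theta(T_c)$, after which items (1)--(3) are immediate from Propositions~\ref{prop:TreeOfSpaces}, \ref{prop:JSJfinitelygenerated} and \ref{prop:PeanoNoCutPoint} together with the argument at the end of Section~\ref{sec:MainThm} that nonparabolic points of $M$ are never cut points.
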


This characterization simplifies related treelike decompositions of boundaries described in the finitely generated case in \cite{Bowditch01_Peripheral,Dahmani03} and extends this treelike structure to groups that need not be finitely generated.
The canonical tree of cylinders is called the ``maximal peripheral splitting'' in \cite{Bowditch01_Peripheral}.
See Proposition~\ref{prop:JSJfinitelygenerated} for a description of this canonical tree.

Haulmark--Hruska have used Theorem~\ref{thm: intro main} in their proof that the JSJ decomposition of a one-ended relatively hyperbolic pair over elementary subgroups depends only on the topology of the boundary \cite{HaulmarkHruska_Canonical}.
Hruska--Walsh have used Theorem~\ref{thm: intro main} in their examination of relatively hyperbolic pairs with planar boundary \cite{HruskaWalsh_Planar}.
Regarding semistability at infinity, Theorem~\ref{thm: intro main}, together with work of Mihalik--Swenson and Haulmark--Mihalik, gives the following immediate result (see \cite{MihalikSwenson21,HaulmarkMihalik_RelativeSemistable} for more details).

\begin{corollary}
Suppose $(\Gamma,\P)$ is nonelementary relatively hyperbolic and $\Gamma$ is finitely generated and one ended.
\begin{enumerate}
    \item If each $P\in \P$ is one ended and $\Gamma$ does not split over any subgroup $Q\le P$ with $P \in \P$, then $\Gamma$ is semistable at infinity.
    \item If each $P \in \P$ is finitely presented and semistable at infinity, then $\Gamma$ is semistable at infinity.
\end{enumerate}
\end{corollary}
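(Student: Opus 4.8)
The plan is to combine the two conclusions of Theorem~\ref{thm: intro main} with the relative semistability criteria of Mihalik--Swenson \cite{MihalikSwenson21} and Haulmark--Mihalik \cite{HaulmarkMihalik_RelativeSemistable}, which carry the real weight; the corollary then drops out. First I would record the relevant topology of the Bowditch boundary $M=\boundary(\Gamma,\P)$. A one-ended finitely generated group does not split over a finite subgroup, hence $\Gamma$ does not split over a finite subgroup relative to $\P$, so $M$ is connected. As noted earlier in the paper, $M$ is always compact, and it is metrizable because $\Gamma$ is finitely generated, hence countable; it is infinite because $(\Gamma,\P)$ is nonelementary. Thus $M$ is an infinite metrizable continuum, and Theorem~\ref{thm: intro main} upgrades this to the statement that $M$ is a Peano continuum, every cut point of which is a parabolic fixed point.

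For part (1), the hypothesis that $\Gamma$ does not split relative to $\P$ over any subgroup $Q\le P$ with $P\in\P$ forces $M$ to have no cut point, by the final assertion of Theorem~\ref{thm: intro main}. Thus $M$ is a Peano continuum without cut points while each $P\in\P$ is one ended --- precisely the hypotheses of the Mihalik--Swenson semistability theorem, which then yields that $\Gamma$ is semistable at infinity. The essential new input is local connectedness of $M$: earlier forms of this implication either assumed local connectedness outright or obtained it only under Bowditch's restrictions on the peripheral subgroups, whereas Theorem~\ref{thm: intro main} now provides it unconditionally.

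For part (2) cut points of $M$ may be present, so instead I would use the full strength of Theorem~\ref{thm: intro main}: $M$ is a locally connected continuum all of whose cut points are parabolic fixed points, organized --- via Theorem~\ref{thm: intro cut point} --- by the canonical cut point tree. Feeding this, together with the assumption that every $P\in\P$ is finitely presented and semistable at infinity, into the relative semistability theorem of Haulmark--Mihalik produces semistability of $\Gamma$.

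I expect no serious obstacle, since all the difficulty is already absorbed into Theorem~\ref{thm: intro main}; the only point demanding care is the bookkeeping check that ``$M$ is locally connected'' and ``every cut point of $M$ is a parabolic fixed point'' are exactly the hypotheses needed to invoke \cite{MihalikSwenson21} and \cite{HaulmarkMihalik_RelativeSemistable} as black boxes --- in particular, that the local connectedness hypothesis appearing in the earlier versions of these theorems can now be discharged, and that ``one ended'' peripheral subgroups (in part (1)) or ``finitely presented and semistable'' peripheral subgroups (in part (2)) are handled by the appropriate one of the two cited results.
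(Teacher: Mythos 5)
Your proposal is correct and follows essentially the same route as the paper, which presents this corollary as an immediate consequence of Theorem~\ref{thm: intro main} combined with the semistability criteria of Mihalik--Swenson \cite{MihalikSwenson21} (for the no-cut-point case with one-ended peripherals) and Haulmark--Mihalik \cite{HaulmarkMihalik_RelativeSemistable} (for finitely presented, semistable peripherals). Your additional bookkeeping---connectedness and metrizability of $M$, and deducing the absence of cut points in part (1) from the final assertion of Theorem~\ref{thm: intro main}---is exactly the intended reading.
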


We also note that the main results of Haulmark \cite{Haulmark19} apply to any relatively hyperbolic pair with locally connected boundary.  (They are stated there with more restrictive hypotheses, but only local connectedness is used in the proofs.)
Thus we have the following immediate consequence.

\begin{corollary}
Let $(\Gamma,\P)$ be relatively hyperbolic.
Suppose $\Gamma$ does not split relative to $\P$ over an elementary subgroup and each $P \in \P$ is one ended.
If the boundary $\boundary(\Gamma,\P)$ is one dimensional, then it is homeomorphic to a circle, a \Sierpinski\ carpet, or a Menger curve.
\end{corollary}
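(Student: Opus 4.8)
The plan is to verify that, under the stated hypotheses, the boundary $M=\boundary(\Gamma,\P)$ meets the input conditions of Haulmark's classification of one-dimensional relatively hyperbolic boundaries \cite{Haulmark19}, and then to invoke that classification. The only condition of \cite{Haulmark19} that is not already among the hypotheses of the corollary is local connectedness of $M$, and this is exactly what Theorem~\ref{thm: intro main} supplies; recall the remark above that the arguments of \cite{Haulmark19} use only local connectedness of the boundary, not the more restrictive standing hypotheses stated there.

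First I would record the consequences of the hypothesis that $\Gamma$ does not split relative to $\P$ over an elementary subgroup. Since $M$ is one dimensional it is nonempty and infinite, so $(\Gamma,\P)$ is nonelementary and $M$ is a nondegenerate compactum. Every finite subgroup is elementary, so $\Gamma$ does not split relative to $\P$ over a finite subgroup; by the criterion recalled in the introduction, $M$ is therefore connected. Theorem~\ref{thm: intro main} now applies: $M$ is locally connected, and $M$ has a cut point if and only if $\Gamma$ splits relative to $\P$ over a subgroup $Q$ of some $P\in\P$. Any such $Q$ is a subgroup of a member of $\P$, hence elementary, so the hypothesis forbids such a splitting; thus $M$ has no cut point. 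Collecting this, $M$ is a locally connected, one-dimensional continuum with no cut point, each $P\in\P$ is one ended, and $\Gamma$ admits no elementary splitting relative to $\P$ --- in particular no splitting over a two-ended subgroup relative to $\P$, which is the relative analogue of the Kapovich--Kleiner hypothesis.

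At this point Haulmark's theorem \cite{Haulmark19}, applied in the generality permitted once its standing hypotheses are weakened to local connectedness of the boundary, yields that $M$ is homeomorphic to a circle, a \Sierpinski\ carpet, or a Menger curve, as desired. I expect the only delicate point to be the bookkeeping just carried out: verifying that ``no elementary splitting relative to $\P$'' simultaneously produces connectedness of $M$ (via finite edge groups), absence of cut points (via edge groups that are subgroups of peripherals, together with Theorem~\ref{thm: intro main}), and the ``no two-ended splitting'' input of the trichotomy; and confirming that the proofs in \cite{Haulmark19} genuinely rely only on local connectedness. There is no further geometric obstacle, since the substantive work lies entirely in Theorem~\ref{thm: intro main} and in \cite{Haulmark19}.
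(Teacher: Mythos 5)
Your proposal is correct and follows essentially the same route as the paper: the corollary is obtained by combining Theorem~\ref{thm: intro main} (which supplies local connectedness, after noting that the no-elementary-splitting hypothesis gives connectedness via the absence of finite splittings relative to $\P$) with Haulmark's classification, whose proofs use only local connectedness of the boundary rather than his stated standing hypotheses. Your additional bookkeeping about cut points and the two-ended splitting condition is a harmless elaboration of what the paper treats as an immediate consequence.
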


We remark that de~Souza has independently obtained a variation of Bowditch's local connectivity theorem in \cite{deSouza_Blowups} using a method that is only valid when the relatively hyperbolic group $\Gamma$ is finitely presented and one ended.  Theorem~\ref{thm: intro main} includes and extends the result of de~Souza.

\subsection{Methods of proof}

The proofs of Theorems \ref{thm: intro main} and~\ref{thm: intro cut point} roughly follow the same outline as the prior proofs of Swarup and Bowditch.  However, several steps require new ideas to extend their conclusions from restricted settings to the broader setting of this paper.

In \cite{Bowditch12_RelHyp}, Bowditch constructs the boundary of a relatively hyperbolic pair $(\Gamma,\P)$ and proves that it is compact and Hausdorff, provided that $\Gamma$ is countable.  
In Section~\ref{sec:ConstructingBoundary}, we show that the same elementary construction extends to the general case with no cardinality restrictions on $\Gamma$.  The resulting Bowditch boundary has a simple, concrete description and coincides with the canonical boundary introduced by Gerasimov in \cite{Gerasimov12} as the abstract Cauchy--Samuel completion of a certain uniform space.
A reader who is only interested in countable relatively hyperbolic groups may safely skip this section, whose only purpose is to extend the reach of this paper beyond countable groups.

In Section~\ref{sec:TreesOfCompacta}, we show that a splitting over parabolic subgroups gives rise to a treelike decomposition of the boundary.  This observation is due to Bowditch \cite{Bowditch01_Peripheral} in the finitely generated case. Dahmani uses an elaborate hands-on construction of a tree of compacta to prove a combination theorem for relatively hyperbolic group pairs in \cite{Dahmani03}.
We simplify this construction by characterizing the treelike structure as the inverse limit of a tree system of compacta in the sense of \Swiatkowski\ \cite{Swiatkowski20}.
The inverse limit construction can be used in place of the elaborate construction of \cite{Dahmani03} and may lead to a deeper understanding of combination theorems.

This simplification allows us to eliminate all restrictions on the cardinality of $\Gamma$ and leads to characterizations of connectedness and local connectedness of boundaries in Section~\ref{sec:Connectedness}. 
Using an accessibility theorem for parabolic splittings (see Proposition~\ref{prop:JSJfinitelygenerated}), we reduce from the general case to the more well-behaved setting of finitely generated relatively hyperbolic groups.
After this point, all groups considered are finitely generated.

Section~\ref{sec:Pretrees} discusses pretrees and establishes results about non-nesting actions and full quotients of pretrees, which are used in the following two sections.
Section~\ref{sec:CutPointPretree} contains a brief summary of the cut point pretree construction of Swenson \cite{Swenson00} and some of its key dynamical properties.
We use this pretree in place of Bowditch's more technically elaborate ``pretree completion'' of \cite{Bowditch99_Treelike} to simplify the analysis in the following section.

Section~\ref{sec:  minimalcodense} is a study of a construction due to Bowditch that associates a dual dendrite $D$ to each continuum $M$ with a preferred family of cut points $C$.
Proposition~\ref{prop:MaximalHausdorff} characterizes this dendrite as the maximal Hausdorff quotient of $M$ in which any two points are identified if they are not separated by a point of $C$.
Bowditch \cite{Bowditch99_Treelike} establishes that this dendrite is nontrivial if $M$ admits a convergence action by a one-ended group $G$ containing no infinite torsion group and $M$ has a nonparabolic cut point.
The main goal of Section~\ref{sec:  minimalcodense} is to extend this result from one-ended groups to relatively one-ended groups.
This extension is one of the main innovations of the present proof; it makes essential use of the non-nesting property and the results about pretrees established in Sections \ref{sec:Pretrees} and~\ref{sec:CutPointPretree}.

In Section~\ref{sec:Isometric action of a relatively hyperbolic group} we verify that a theorem of Levitt \cite{Levitt98} about non-nesting actions on trees extends from finitely presented groups to groups with a finite relative presentation.
Section~\ref{sec:MainThm} contains an accessibility theorem for relatively hyperbolic groups and splittings over two-ended loxodromic subgroups.  It also contains the proofs of Theorems \ref{thm: intro main} and~\ref{thm: intro cut point}, which combine the ingredients discussed above.

\subsection{Acknowledgements}
We thank Arka Banerjee, Victor Gerasimov, Craig Guilbault, Aaron Messerla, Hoang Thanh Nguyen, Prayagdeep Parija, Kim Ruane, and the anonymous referee for helpful conversations, feedback, and corrections that influenced this paper.
The second author was partially supported by grant \#714338 from the Simons Foundation.

\section{Relatively hyperbolic groups and their splittings}
\label{sec: rel hyp groups}

\subsection{Relative hyperbolicity}

This section discusses relatively hyperbolic groups and their boundaries. 
For more details, see \cite{Bowditch12_RelHyp,GerasimovPotyagailo15,GerasimovPotyagailo16}.  Alternative approaches may be found in \cite{DrutuSapir05,Osin06_RelHyp,GrovesManning08}.

\begin{definition}
\label{def: graph, tree}
A \emph{graph} $K$ is a set $V=V(K)$ of \emph{vertices}, a set $E=E(K)$ of \emph{oriented edges}, an \emph{initial vertex} $\iota(\vec{e}\,)$, \emph{terminal vertex} $\tau(\vec{e}\,)$, and \emph{inverse edge} $-\vec{e}$ for each oriented edge $\vec{e}\in E(K)$ satisfying $-\vec{e}\ne\vec{e}$, $-(-\vec{e}\,)=\vec{e}$, and $\iota(\vec{e}\,) = \tau(-\vec{e}\,)$.
An (unoriented) \emph{edge} $e$ is a pair of oriented edges $\{\vec{e},-\vec{e}\,\}$.

In a graph $K$, a \emph{path} is indicated by giving its sequence of oriented edges or its sequence of vertices (if the graph is simplicial).
A path is \emph{simple} if its vertices are distinct.
A \emph{cycle} is a path from a vertex to itself, and a \emph{circuit} is a cycle with all vertices distinct except for the initial and terminal vertices, which are equal.
A \emph{simplicial tree} is a connected graph with no circuits.

We treat graphs as combinatorial objects, as above, or as topological or metric spaces depending on the context.
\end{definition}

\begin{definition}
A geodesic space $X$ is \emph{$\delta$--hyperbolic} if each side of a geodesic triangle lies in the $\delta$--neighborhood of the union of the other two sides.
\end{definition}

\begin{definition}
A \emph{group pair} $(\Gamma,\mathbb{P})$ consists of a group $\Gamma$ together with a finite collection $\mathbb{P}$ of subgroups of $\Gamma$.
Given a group pair $(\Gamma,\mathbb{P})$, an action of $\Gamma$ on $X$ is \emph{relative to $\mathbb{P}$} if each member of $\mathbb{P}$ has a fixed point in $X$.
\end{definition}

\begin{definition}[Relatively hyperbolic]
\label{Def: RelHyp}
A connected simplicial graph $K$ is \emph{fine} if for each $n$, each edge in $K$ is contained in only finitely many circuits of length $n$.
An action of a group pair $(\Gamma,\P)$ on a graph $K$ is \emph{relatively hyperbolic} if the action is relative to $\P$ and satisfies the following properties:
\begin{enumerate}
    \item The graph $K$ is fine and $\delta$--hyperbolic for some $\delta<\infty$.
    \item There are finitely many $\Gamma$--orbits
    of edges and each edge stabiliser is finite.
    \item The elements of $\P$ are representatives of the conjugacy classes of infinite vertex stabilisers of $K$.
\end{enumerate}
A group pair is \emph{relatively hyperbolic} if the pair admits a relatively hyperbolic action on a graph $K$.
The special case in which $\P$ is the empty collection reduces to the notion of a word hyperbolic group.
As our main results are already known in this case, we assume throughout this article that $\P$ is nonempty.  In this case, we may assume, in addition, that every vertex of $K$ has an infinite stabilizer (see \cite{Bowditch12_RelHyp}).
\end{definition}

\begin{definition}[Convergence groups]
For any compact Hausdorff space $M$, let $\Delta \subset M^n$ denote the subset of points with at least two coordinates equal.
The \emph{space of distinct $n$--tuples} $\Theta^n(M)$ is the quotient of $M^n \setminus \Delta$ by the symmetric group $S_n$, which acts by permuting the coordinates.
An action of a discrete group on $M$ is a \emph{convergence group action} if it satisfies one of the following conditions:
\begin{enumerate}
\item $M$ is empty, and $G$ is a finite group.
\item $M$ contains only one point, and $G$ is arbitrary.
\item $M$ contains exactly two points, and $G$ is virtually infinite cyclic.
\item $M$ contains at least three points and the action on $\Theta^3(M)$ is proper.
\end{enumerate}
A subgroup $H$ of a convergence group is \emph{elementary} if its limit set $\Lambda H$ contains at most two points.
If the limit set of $H$ has exactly one point, we say that $H$ is \emph{parabolic}, and if it has two points we say that $H$ is \emph{loxodromic}. Every loxodromic subgroup is virtually infinite cyclic.
We refer the reader to \cite{Tukia94,Tukia94Erratum,Bowditch99_Convergence} for more background on convergence groups in the setting of compact Hausdorff spaces that need not be metrizable.

A convergence group action of a group pair $(\Gamma,\mathbb{P})$ on a compact Hausdorff space $M$ is \emph{relatively hyperbolic} if the elements of $\mathbb{P}$ are representatives of the conjugacy classes of maximal parabolic subgroups and the action of $\Gamma$ on $\Theta^2(M)$ is cocompact.
(In the elementary case, $\Theta^2(M)$ has at most one point, so every action on it is considered to be cocompact.)
\end{definition}

\begin{definition}[Bowditch boundary]
\label{def:BowditchBoundary}
Suppose $(\Gamma,\mathbb{P})$ has a relatively hyperbolic action on the graph $K$.
Using a variation of a construction due to Bowditch, Gerasimov constructs a compact Hausdorff space $M = M(K)$ on which $(\Gamma,\mathbb{P})$ acts as a relatively hyperbolic convergence group
\cite{Bowditch12_RelHyp,Gerasimov12}.
Although a relatively hyperbolic pair $(\Gamma,\mathbb{P})$ admits relatively hyperbolic actions on many different graphs $K$, the associated compactum $M=M(K)$ is uniquely determined up to $\Gamma$--equivariant homeomorphism \cite{Bowditch12_RelHyp,GerasimovPotyagailo16}.
This unique compactum $M$ is the \emph{Bowditch boundary}, denoted $M=\boundary(\Gamma,\mathbb{P})$.
\end{definition}

\begin{remark}[On finite generation]
The definitions of relative hyperbolicity for a group pair $(\Gamma,\mathbb{P})$ above make no restriction on the cardinality of the discrete group $\Gamma$.
It is not clear whether there are interesting examples of uncountable relatively hyperbolic groups. 
In practice, one often restricts attention to the case when $\Gamma$ is finitely generated.
We note that if $(\Gamma,\mathbb{P})$ is relatively hyperbolic then $\Gamma$ is countable if and only if the Bowditch boundary $M= \boundary(\Gamma,\mathbb{P})$ is metrizable \cite{Gerasimov09}.
In fact, when $\Gamma$ is countable, $M$ is the (metrizable) Gromov boundary of a locally finite $\delta$--hyperbolic graph, known as the cusped space, whose geometry is central to many applications in the literature (see \cite{GrovesManning08,Hruska10}).
Cusped spaces do not play a significant role in this article, except for a brief appearance in the proof of Proposition~\ref{prop:PeanoNoCutPoint}.
\end{remark}

\subsection{Splittings of relatively hyperbolic groups}
\label{sec:Splittings}

This section summarizes the structure of splittings of relatively hyperbolic groups over parabolic subgroups.  The results of this section were first introduced in the finitely generated setting by Bowditch \cite{Bowditch01_Peripheral}.  This section extends that study to arbitrary relatively hyperbolic pairs and expresses the results in the terminology of Guirardel--Levitt's development of JSJ theory \cite{GuirardelLevitt_JSJ}.

Let $(\Gamma,\P)$ be a group pair, and let $\mathbb{A}$ be a family of subgroups of $\Gamma$ closed under conjugation.
Suppose $\Gamma$ acts on a simplicial tree $T$ without inversions and with no proper invariant subtree. Then $T$ is an \emph{$(\mathbb{A},\P)$--tree} if the action is relative to $\P$ and each edge stabilizer is a member of $\mathbb{A}$.
Trees $T$ and $T'$ are \emph{equivalent} if there are $\Gamma$--equivariant maps $T\to T'$ and also $T'\to T$.

\begin{definition}[Co-elementary]
Suppose $(\Gamma,\P)$ is relatively hyperbolic.
Two infinite elementary subgroups $H$ and $K$ are \emph{co-elementary} if they have the same limit set in the Bowditch boundary.
The limit point of a parabolic subgroup is a \emph{parabolic point}.  Its stabilizer is a maximal parabolic subgroup.
If $\{a,b\}$ is the limit set of a loxodromic subgroup $H$, then $H$ is a finite index subgroup of the maximal loxodromic subgroup $\Stab\{a,b\}$, which is again virtually cyclic.
In particular, co-elementary loxodromic subgroups are always commensurable.
It follows that each infinite elementary subgroup is contained in a unique maximal elementary subgroup.
For proofs of the above assertions, see \cite{Tukia94,Tukia94Erratum}.
\end{definition}

\begin{definition}
A group pair $(\Gamma,\P)$ is \emph{relatively one ended} if $\Gamma$ is infinite, $\Gamma \notin \P$, and $\Gamma$ does not split over a finite subgroup relative to $\P$.
\end{definition}

\begin{definition}[Cylinders]
Suppose $(\Gamma,\P)$ is relatively hyperbolic
and $\Gamma$ is one ended relative to $\P$. 
We will consider two cases in which $\mathbb{E}$ denotes either the collection of elementary subgroups or the collection of either finite or parabolic subgroups of $\Gamma$.
Let $\mathbb{E}_\infty$ denote the family of infinite subgroups that are members of $\mathbb{E}$.
The one-ended hypothesis implies that every $(\mathbb{E},\P)$--tree is actually an $(\mathbb{E}_\infty,\P)$--tree. 
Given an $(\mathbb{E},\mathbb{P})$--tree $T$, edges $e$ and $e'$ are considered to be equivalent if their stabilizers are co-elementary.
By \cite[Lem.~9.17]{GuirardelLevitt_JSJ}, the union of all edges in an equivalence class is a subtree of $T$.
(We note that the cited lemma is proved under the hypothesis that $\Gamma$ is finitely generated.  However, this hypothesis is not required, since all properties of elementary subgroups used in the proof also hold in the present more general setting.)
These subtrees are known as \emph{cylinders}.
\end{definition}

\begin{lemma}
\label{lem:Cylinders}
Let $T$ be an $(\mathbb{E},\P)$--tree.  Suppose $Y$ is the cylinder consisting of the equivalence class of the edge $e$.
Then the stabilizer $\Gamma_Y$ of $Y$ is the maximal elementary subgroup containing the stabilizer $\Gamma_e$.
Furthermore, $\Gamma_Y$ fixes a vertex of $T$.
\end{lemma}

\begin{proof}
Suppose $g$ stabilizes the cylinder $Y$. Then $\Gamma_e$ and $\Gamma_{g(e)}$ are co-elementary, so they have the same limit set $L=g(L)$. Thus, $g$ lies in the maximal elementary subgroup $\Stab(L)$.
Conversely, if $g\in \Stab(L)$ it must stabilize the cylinder $Y$, so that $\Gamma_Y=\Stab(L)$.

If $\Gamma_Y$ is a parabolic subgroup of $\Gamma$, then it has a fixed vertex in $T$ by the definition of $(\mathbb{E},\P)$--tree.
If $\Gamma_Y$ is loxodromic, then $\Gamma_e$ is a finite index subgroup of $\Gamma_Y$. Therefore, the edge $e$ has a finite orbit under the action of $\Gamma_Y$. Since $\Gamma_Y$ acts on $T$ with a bounded orbit, it must fix some vertex of $T$ (\cite[Prop.~I.4.19]{Serre_Trees}).
\end{proof}

\begin{definition}[Tree of cylinders]
The \emph{tree of cylinders} $T_c$ associated to $T$ is the bipartite tree with vertex set $V_0 (T_c) \sqcup V_1(T_c)$, where $V_0(T_c)$ is the set of all vertices $v$ of $T$ that belong to at least two cylinders, $V_1(T_c)$ is the set of cylinders $w$ of $T$, and there is an edge between $v$ and $w$ in $T_c$ if and only if $v$ is a vertex of the subtree $w$.
Equivalent trees have the same tree of cylinders \cite[Lem.~7.3]{GuirardelLevitt_JSJ}.
(Once again the cited result has a finite generation hypothesis that is not used in the proof.)

Furthermore, we will see that the graph $T_c$ is an $(\mathbb{E},\mathbb{P})$--tree equivalent to $T$.
To verify equivalence, one needs to show that each vertex stabilizer of $T$ fixes a vertex of $T_c$ and vice versa.  Let $v$ be a vertex of $T$ with stabilizer $\Gamma_v$. If $v$ lies in two cylinders, then $v$ is a $V_0$--vertex of $T_c$ fixed by $\Gamma_v$. On the other hand, if $v$ lies in a unique cylinder $Y$, then $Y$ is a $V_1$--vertex of $T_c$ fixed by $\Gamma_v$.
The converse is given by Lemma~\ref{lem:Cylinders}.
Therefore, $T$ and $T_c$ are equivalent.
It follows that each member of $\P$ has a fixed vertex in $T_c$. The stabilizer of an edge of $T_c$ is contained in the stabilizer of a cylinder, so it must be a member of $\mathbb{E}$ by Lemma~\ref{lem:Cylinders}.

It follows that $T_c$ is equal to its own tree of cylinders. In particular, $T$ and $T_c$ have the same family of nonelementary vertex stabilizers (\cite[Cor.~4.4]{GuirardelLevitt_Deformation}).
\end{definition}

\begin{proposition}[\cite{BigdelyWise13}, Rem.~2.2 and Lem.~4.9]
\label{prop:BigdelyWise}
Let $(\Gamma,\P)$ be relatively hyperbolic.  Suppose $T$ is a tree on which $\Gamma$ acts relative to $\P$.  If every edge stabilizer is a finitely generated elementary subgroup, then each vertex stabilizer is hyperbolic relative to a family of representatives of the conjugacy classes in $\Gamma_v$ of infinite subgroups of the form $\Gamma_v \cap gPg^{-1}$ for $g \in \Gamma$ and $P \in \P$.
\end{proposition}

We note that the statement of \cite[Lem.~4.9]{BigdelyWise13} requires that each vertex stabilizer and each edge stabilizer be finitely generated.  The finite generation of edge groups is used significantly in the proof, but the finite generation of vertex stabilizers is not used and may be omitted as a hypothesis.

\begin{proposition}[\emph{cf.}\ \cite{GuirardelLevitt_Splittings}, Lem.~3.8]
\label{prop:RelQCVertex}
Let $(\Gamma, \P)$ be relatively hyperbolic, and suppose $\Gamma$ is one ended relative to $\P$.
Let $\mathbb{E}$ be the family of elementary subgroups.
Let $T$ be any $(\mathbb{E},\P)$--tree that is equal to its own tree of cylinders.
Suppose each edge group of $T$ is finitely generated.
For each vertex $v\in V_0(T)$, the stabilizer $\Gamma_v$ is hyperbolic relative to 
a family of subgroups $\mathbb{Q}_v = \Inc_v \cup \mathbb{P}_{|\Gamma_v}$, defined as follows:
   \begin{enumerate}
       \item $\Inc_v$ consists of representatives of the conjugacy classes in $\Gamma_v$ of stabilizers of edges incident to $v$ in $T$.
       \item $\mathbb{P}_{|\Gamma_v}$ consists of representatives of the conjugacy classes in $\Gamma_v$ of subgroups $gPg^{-1}$ with $g \in \Gamma$ and $P \in \P$ that stabilize $v$ and do not stabilize any edge incident to $v$ in $T$.
   \end{enumerate}
\end{proposition}

\begin{proof}
By Proposition~\ref{prop:BigdelyWise}, each $\Gamma_v$ with $v \in V_0(T)$ is hyperbolic relative to a family of representatives of the conjugacy classes in $\Gamma_v$ of infinite subgroups of the form $\Gamma_v \cap gPg^{-1}$ for $g \in \Gamma$ and $P \in \P$.
Observe that the stabilizers of vertices adjacent to $v$ are distinct maximal elementary subgroups, so their pairwise intersections are finite.

Suppose $H = \Gamma_v \cap gPg^{-1}$ is infinite.  Then $gPg^{-1}$ stabilizes either $v$ or a vertex $w$ adjacent to $v$.  In either case, $H$ is a conjugate to a member of $\Inc_v \cup \mathbb{P}_{|\Gamma_v}$.
Conversely, if $H \in \Inc_v \cup \mathbb{P}_{|\Gamma_v}$ does not have the form $\Gamma_v \cap gPg^{-1}$, then $H$ must be a maximal elementary loxodromic subgroup of $\Gamma_v$.  A theorem of Osin \cite{Osin06_Elementary} states that such subgroups can be added to the peripheral structure of $\Gamma_v$.
\end{proof}

Given a parabolic splitting, one may construct a graph $K$ with a relatively hyperbolic action that is compatible with the splitting in the following sense.

\begin{definition}[Compatible action]
\label{defn:Compatible}
Suppose $(\Gamma,\P)$ is relatively hyperbolic and relatively one ended.
Let $\mathbb{E}$ be the family of finite or parabolic subgroups.
Let $T$ be an $(\mathbb{E},\P)$--tree with finitely generated edge stabilizers such that $T$ is equal to its tree of cylinders.
For each vertex $v \in V_0(T)$, choose a graph $K_v$ admitting a relatively hyperbolic action of the pair $(\Gamma_v,\Q_v)$. If $v$ and $v'$ are in the same $\Gamma$--orbit, we choose the same graph $K_v = K_{v'}$. For each $w \in V_1(T)$ let $K_w$ be a graph consisting of a single vertex.
For each edge $e$ of $T$ joining vertices $v$ and $w$, we glue the vertex $K_w$ to the unique vertex of $K_v$ fixed by the parabolic subgroup $\Gamma_e$.
The graph $K$ resulting from these identifications admits a natural relatively hyperbolic action of $(\Gamma,\mathbb{P})$ and
is said to be \emph{compatible} with $T$.
(See \cite[Thm.~1.4]{BigdelyWise13} for more details.)
\end{definition}

The following theorem due to Osin is often useful when studying splittings over parabolic subgroups.

\begin{theorem}[\cite{Osin06_RelHyp}, Thm.~2.44 and Prop.~2.29]
\label{thm:OsinFinGen}
Let $(\Gamma,\P)$ be relatively hyperbolic. If\/ $\Gamma$ does not split relative to $\P$ over parabolic subgroups, then $\Gamma$ and the members of $\P$ are all finitely generated.
\end{theorem}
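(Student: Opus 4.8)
The plan is to deduce the statement from results of Osin in \cite{Osin06_RelHyp}, whose framework presupposes that $\Gamma$ is generated by a finite set together with the peripheral subgroups. So the first task is to verify that hypothesis directly from the fine-graph definition of relative hyperbolicity, and the rest is a matter of translating between that definition and Osin's relative-isoperimetric one.

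\emph{Step 1: every relatively hyperbolic pair $(\Gamma,\P)$ is finitely generated relative to $\P$, i.e. $\Gamma = \bigl\langle\, X \cup \bigcup_{P \in \P} P \,\bigr\rangle$ for some finite set $X$.} Fix a $(\Gamma,\P)$--graph $K$. Since $K$ is connected and $\Gamma$ acts with finitely many orbits of edges, the quotient $K/\Gamma$ is a finite connected graph; after subdividing edges we may assume the action of $\Gamma$ on $K$ is without inversions. Lift a spanning tree of $K/\Gamma$ to a finite subtree $T \subseteq K$. The standard structure theory for a group acting on a connected graph then presents $\Gamma$ as generated by the finitely many vertex stabilizers $\Gamma_v$, $v \in V(T)$, together with one element for each edge of $K/\Gamma$ outside the spanning tree. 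Each $\Gamma_v$ is either finite or a conjugate $gPg^{-1}$ of a member of $\P$. Collecting the conjugating elements, the edge elements, and the elements of the finite vertex stabilizers into a finite set $X$ yields the claim.

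\emph{Step 2: pass to Osin's setting and conclude.} In the presence of a finite relative generating set the fine-graph definition of relative hyperbolicity agrees with Osin's, and this comparison uses no restriction on the cardinality of $\Gamma$ (see \cite{Bowditch12_RelHyp,Osin06_RelHyp,GerasimovPotyagailo16}). Hence $(\Gamma,\P)$ is relatively hyperbolic in the sense of \cite{Osin06_RelHyp}, and Proposition~2.29 together with Theorem~2.44 of \cite{Osin06_RelHyp} applies, giving the dichotomy that either $\Gamma$ splits over a parabolic subgroup relative to $\P$ or every $P \in \P$ is finitely generated. Under the hypothesis the first alternative is excluded, so every member of $\P$ is finitely generated; since $\P$ is finite and $\Gamma$ is finitely generated relative to $\P$ by Step 1, $\Gamma$ is finitely generated as well.

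The only real content is in Step 2, and specifically in Osin's accessibility theorem, which this argument treats as a black box and which rests on his relative linear isoperimetric estimates; Step 1 and the final reduction to $\Gamma$ finitely generated are routine. The one point requiring care on our side is that the comparison of definitions in Step 2 be invoked at the generality needed here — arbitrarily large $\Gamma$, with only the peripheral collection $\P$ required to be finite — so one should either cite a source stating the equivalence in that generality or note that the usual proofs do not use countability.
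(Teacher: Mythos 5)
Your proposal is correct and takes essentially the same route as the paper: the paper offers no independent argument for this statement, deriving it exactly as you do by citing Osin's Theorem~2.44 and Proposition~2.29. Your Step~1 (finite relative generation extracted from the action on a $(\Gamma,\P)$--graph) and the remark on matching the fine-graph definition with Osin's framework are just the routine glue that the paper leaves implicit in making that citation.
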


The tree $T_{JSJ}$ in the following proposition is the \emph{canonical JSJ tree of cylinders} for splittings of $\Gamma$ over parabolic subgroups relative to $\P$.

\begin{proposition}
\label{prop:JSJfinitelygenerated}
Let $(\Gamma,\P)$ be relatively hyperbolic and relatively one ended.
Let $\mathbb{E}$ be the family of finite or parabolic subgroups.
There exists an $(\mathbb{E},\P)$--tree $T_{JSJ}$ equal to its own tree of cylinders having the following properties.

The action of $\Gamma$ on $T_{JSJ}$ has finitely many orbits of edges.
Each edge stabilizer is infinite, parabolic, and finitely generated. For each $v \in V_0(T_{JSJ})$ the stabilizer $\Gamma_v$ is finitely generated, the pair $(\Gamma_v,\Q_v)$ is relatively hyperbolic, and $\Gamma_v$ does not split over a finite or parabolic subgroup relative to $\Q_v$.
\end{proposition}

\begin{proof}
There exists a unique equivalence class, known as the JSJ deformation space, such that the tree of cylinders $T_{JSJ}$ of any tree in this class has the following properties.
The action on $T_{JSJ}$ has finitely many orbits of edges and, for each $v \in V_0(T_{JSJ})$, the stabilizer $\Gamma_v$ does not split relative to $\mathbb{Q}_v$ over any finite or parabolic subgroup.
(For the existence and uniqueness of the JSJ deformation space, see \cite[\S 3.8]{GuirardelLevitt_JSJ} under the assumption that $\Gamma$ is finitely generated. The proof never uses finite generation, so it holds in the present setting. For the fact that $\Gamma_v$ does not split, see \cite[Cor.~4.14]{GuirardelLevitt_JSJ}.)

Proposition~\ref{prop:RelQCVertex} implies that $\Gamma_v$ is hyperbolic relative to $\Q_v$ for each $v \in V_0(T_{JSJ})$.
Thus, by Theorem~\ref{thm:OsinFinGen} the group $\Gamma_v$ and all members of $\Q_v$ are finitely generated.
In particular, every edge incident to $v$ in $T_{JSJ}$ has a finitely generated stabilizer.
\end{proof}

\section{Constructing the Bowditch boundary}
\label{sec:ConstructingBoundary}

In this section, we study the definition of the Bowditch boundary, proving the equivalence of two constructions of the boundary due to Bowditch \cite{Bowditch12_RelHyp} and Gerasimov \cite{Gerasimov12}.
In each case, a fine $\delta$--hyperbolic graph $K$ leads to a canonical compact Hausdorff space that contains the vertex set $V(K)$ as a dense subspace.
Bowditch constructs a compact Hausdorff space $\Delta K$ whose underlying set is $V(K) \cup \boundary K$.
The topology on $\Delta K$ is given by an explicit neighborhood basis, which is quite simple and easy to work with directly.
However, the proof in \cite{Bowditch12_RelHyp} that $\Delta K$ is compact is valid only in the special case that $K$ has a countable vertex set.
Gerasimov gives a different definition of a compactum $M=M(K)$ for arbitrary $K$ by first defining a uniform structure on $V(K)$ and then letting $M$ be its Cauchy--Samuel completion, a space whose points are the minimal Cauchy filters on $V(K)$. The latter construction is theoretically elegant but quite technically elaborate, and the exact relationship between the compactum $M$ and the set $V(K) \cup \boundary K$ is a bit obscured by this approach.

In this section we show---with no restriction on the cardinality of $K$---that the two constructions lead to spaces that are canonically homeomorphic.
In order to produce this homeomorphism between $\Delta K$ and $M(K)$, the main step is Proposition~\ref{prop: Del K is compact}, which establishes the compactness of $\Delta K$ for an arbitrary fine $\delta$--hyperbolic graph $K$.
We begin with a study of the topology on $\Delta K$ introduced in \cite[\S 8]{Bowditch12_RelHyp}.

\begin{definition}[Topology on $\Delta K$]
\label{def: topology on Delta K}
Let $K$ be an arbitrary fine $\delta$--hyperbolic graph for some constant $\delta$. Let $\Delta K$ denote the disjoint union $V(K) \cup \boundary K$.
For points $a,b \in \Delta K$, we denote a geodesic from $a$ to $b$ by $[a, b]$. For each $a \in \Delta K$ and each finite subset $A \subseteq V(K)$, define
\[
   M'(a, A) = \bigset{b \in \Delta K}{\text{there exists $[a,b]$ such that $[a,b] \cap A \setminus {a} = \emptyset$} }.
\]
It is established in \cite[\S 8]{Bowditch12_RelHyp} that the sets $M'(a,A)$ form a base for a topology on $\Delta K$ such that for each $a$ the family $\bigl\{ M'(a,A)\}_A$ is a neighborhood base at $a$.
The space $\Delta K$ is Hausdorff and contains $V(K)$ as a dense subset.
\end{definition}

Suppose $f\colon \N \to \N$ is any function with $f\ge 1_{\N}$ and bounded above by a linear function. 
A \emph{simple $f$--quasigeodesic} is a simple path $\gamma$ such that for each subpath $\gamma'$ with endpoints $x$ and $y$ we have $\operatorname{length}(\gamma')\leq f\bigl( d(x, y) \bigr)$.
For each $f$ as above, equivalent neighborhood bases for a point $a \in \Delta K$ are given by the collections
\[
  \bigl\{ M'_f(a,A) \bigr\}_A \quad
  \text{and} \quad
  \bigl\{ P'_f(a,J) \bigr\}_J
\]
defined below, where $A$ ranges over all finite sets of vertices and $J$ ranges over all finite sets of edges of $K$. (When $a$ is a vertex, it suffices to let $J$ range over finite sets of edges incident to $a$.)
\begin{enumerate}
    \item The set $M'_f(a, A)$ contains all $b \in \Delta K$ such that vertex set of at least one simple $f$--quasigeodesic from $a$ to $b$ is disjoint from $A\setminus \{a\}$.
    \item The set $P'_f(a, J)$ contains all $b \in \Delta K$ such that edge set of at least one simple $f$--quasigeodesic from $a$ to $b$ does not intersect $J$.
\end{enumerate}

Shortcutting is a process that produces a simple quasigeodesic from a pair of geodesics as in the following lemma (see \cite[\S 8]{Bowditch12_RelHyp} for a proof).

\begin{lemma}[Shortcutting]
\label{lem: shortcutting}
There exists a function $f_1\colon \N \to \N$ with $f_1\ge 1$ and bounded above by a linear function and there exists a constant $R_1$, such that $f_1$ and $R_1$ depend only on the hyperbolicity constant of $K$ and such that the following holds.
Suppose $a$ and $b$ are two points in $\Delta K$  and $c \in V(K)$ such that both $a$ and $b$ are not the same point of $\partial K$. Let $\alpha$ be a geodesic from $a$ to $c$ and $\beta$ be a geodesic from $b$ to $c$.
Then there exists a simple $f_1$--quasigeodesic
$\gamma$ from $a$ to $b$ such that $\gamma = \alpha' \cup \omega \cup \beta'$, where the path $\alpha'$ is a subpath of $\alpha$, the path $\beta'$ is a subpath of $\beta$, and the path $\omega$ has length at most $R_1$. If $a\in V(K)$ then the edge of $\gamma$ incident to $a$ is contained in $\alpha$.
\end{lemma}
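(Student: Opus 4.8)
The plan is to build $\gamma$ directly from the two given geodesics, using the thinness of the (possibly ideal) geodesic triangle with vertices $a$, $b$, $c$ and sides $\alpha$, $\beta$, and a geodesic $[a,b]$. Write $(a\mid b)_c$ for the Gromov product of $a$ and $b$ based at $c$; by the hypothesis that $a$ and $b$ are not a single point of $\partial K$, this is finite. Put $N=\lfloor(a\mid b)_c\rfloor$, let $p$ be the vertex at distance $N$ from $c$ along $\alpha$ and $q$ the vertex at distance $N$ from $c$ along $\beta$. Thinness of the triangle on $a$, $b$, $c$ gives a constant $R_0=R_0(\delta)$ with $d(p,q)\le R_0$; fix a geodesic $\omega_0$ from $p$ to $q$, so $\operatorname{length}(\omega_0)\le R_0$. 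The construction starts from the edge-walk $\gamma_0=\alpha_0\cdot\omega_0\cdot\beta_0$ (not yet simple), where $\alpha_0$ is the subpath of $\alpha$ from $a$ to $p$ and $\beta_0$ the subpath of $\beta$ from $q$ to $b$.

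First I would record the metric estimate for $\gamma_0$. Let $\gamma_0'$ be a finite subpath with endpoints $x$, $y$. If $x$ and $y$ both lie on $\alpha_0$ or both on $\beta_0$ then $\gamma_0'$ is a geodesic. Otherwise, relabelling if needed, $x$ lies on $\alpha_0$ at distance $s\ge N$ from $c$ and $y$ lies on $\beta_0$ at distance $t\ge N$ from $c$, so that
\[
  \operatorname{length}(\gamma_0')\le (s-N)+\operatorname{length}(\omega_0)+(t-N)\le 2(x\mid y)_c-2N+R_0 .
\]
Applying the inequality $(u\mid w)_c\ge\min\{(u\mid v)_c,(v\mid w)_c\}-\delta$ with $v=x$ and then with $v=y$, together with $(a\mid x)_c=s$ and $(y\mid b)_c=t$, gives $(x\mid y)_c\le(a\mid b)_c+2\delta$ (with the usual additive $O(\delta)$ when $a$ or $b\in\partial K$); hence $\operatorname{length}(\gamma_0')\le d(x,y)+C$ with $C=C(\delta):=R_0+4\delta+2$. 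Therefore $\gamma_0$ is an $f_1$--quasigeodesic for $f_1(n)=n+C$, which satisfies $f_1\ge 1$ and is bounded above by a linear function; both $f_1$ and the constant $R_1$ below depend only on $\delta$.

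The heart of the argument is to replace $\gamma_0$ by a \emph{simple} path of the asserted form. I would reduce the walk $\gamma_0$ to a simple path $\gamma$ from $a$ to $b$ by the standard loop-deletion procedure, so that the edge-sequence of $\gamma$ is obtained from that of $\gamma_0$ by excising blocks and never repeating a vertex. Because $\alpha_0$ and $\beta_0$ are simple and $\omega_0$ is a geodesic, loop-deletion can only jump forward along $\alpha_0\cdot\omega_0\cdot\beta_0$; a short case analysis then shows that $\gamma$ has the form $\alpha'\cdot\omega\cdot\beta'$, where $\alpha'$ is an initial subpath of $\alpha_0$ (hence of $\alpha$), $\omega$ is a subpath of $\omega_0$, and $\beta'$ is a terminal subpath of $\beta_0$ (hence of $\beta$). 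In particular $\operatorname{length}(\omega)\le R_0=:R_1$, simplicity is built into the reduction, and the quasigeodesic bound transfers from $\gamma_0$: a finite subpath of $\gamma$ with endpoints $x$, $y$ traverses only edges that lie, in $\gamma_0$, between an occurrence of $x$ and one of $y$, so its length is at most that of the corresponding subwalk of $\gamma_0$, which is at most $d(x,y)+C$. For the last clause, suppose $a\in V(K)$: so long as $a$ lies at distance more than $2R_0$ from the geodesic $\beta$, the vertex at which $\gamma$ leaves $\alpha_0$ is not $a$, because that vertex recurs in $\omega_0\cup\beta_0$ and the latter is contained in the $2R_0$--neighbourhood of $\beta$; hence $\alpha'$ contains the initial edge of $\alpha$. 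In the boundary configuration where $a$ lies within $2R_0$ of $\beta$ one instead forces $\alpha'$ to be the first edge $e$ of $\alpha$ and connects the far endpoint of $e$ to a nearby vertex of $\beta$ by a geodesic of length at most $2R_0+1$, enlarging $R_1$ accordingly; after loop-deletion this again yields a simple path of the required form with initial edge $e\subseteq\alpha$ (except when $a$ lies on $\beta$, in which case a subpath of $\beta$ serves as $\gamma$).

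I expect the third paragraph to be the main obstacle. The metric inequalities are routine $\delta$--hyperbolic bookkeeping, but one must check carefully that loop-deletion confines all self-intersection to the bounded middle segment $\omega_0$, so that $\gamma$ genuinely has the decomposition $\alpha'\cdot\omega\cdot\beta'$ with $\operatorname{length}(\omega)\le R_1$ and with the prescribed initial edge at $a$; this requires elementary but attentive tracking of how the reduction interacts with the three blocks $\alpha_0$, $\omega_0$, $\beta_0$. The structural facts that make it work are that $\alpha_0$ and $\beta_0$ are simple and that two geodesics issuing from $c$ can meet again only at parameters at most $(a\mid b)_c+\delta$, which together prevent loop-deletion from producing a long detour where $\omega_0$ runs back along $\alpha$ or $\beta$.
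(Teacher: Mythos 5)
Your core construction is sound and is essentially the standard shortcutting argument (the one in Bowditch's \S 8, which the paper cites rather than reproves): cut $\alpha$ and $\beta$ at parameter $N\approx(a\mid b)_c$ from $c$, join the cut points by a short geodesic $\omega_0$ supplied by thinness, verify the additive-error subpath estimate, and loop-erase. Two small points: the subpath estimate should also treat subpaths with an endpoint on $\omega_0$ (routine, it only enlarges $C$ by $O(R_0)$), and your structural claim about loop-erasure is correct but for the right reason: since $\alpha_0$, $\omega_0$, $\beta_0$ are each simple, every truncation cuts back into the already-retained part, and after the last truncation the tail is a terminal segment of $\omega_0$ or of $\beta_0$; this gives the decomposition $\alpha'\cup\omega\cup\beta'$ without needing the remark about where two geodesics from $c$ can meet.

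The genuine gap is the final clause. In your ``boundary configuration'' you assert that after prepending the edge $e=[a,a_1]$ of $\alpha$ and connecting $a_1$ to a nearby vertex of $\beta$, loop-deletion ``again yields a simple path of the required form with initial edge $e$.'' That is not justified: the connecting geodesic, chosen only by its length, may pass through $a$, and then loop-deletion erases $e$. Worse, your fallback when $a$ lies on $\beta$ (take a subpath of $\beta$ as $\gamma$) does not have its first edge in $\alpha$, so the clause is simply not proved in that case --- and it cannot be: if $K$ is the path $c$--$u$--$a$--$v$--$b$, with $\alpha$ the geodesic $a,u,c$ and $\beta$ the geodesic $b,v,a,u,c$, then no simple path from $a$ to $b$ begins with the edge $\{a,u\}$. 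So the case $a\in\beta$ must be excluded (as it implicitly is in the paper's only use of the clause, in Proposition~\ref{prop: Del K is compact}, where $\beta$ is chosen to miss a finite vertex set containing $a$), and in the remaining case $a\notin\beta$ you still owe an argument that $e$ survives. One way: leave $\alpha$ at a vertex $x$ with $d(c,x)\le (a\mid b)_c-\delta-1$; then $d(x,a)=d(c,a)-d(c,x)>\delta$, while thinness of the triangle $(a,b,c)$ gives a path of length at most $\delta$ from $x$ to $\beta$, which therefore cannot reach back to $a$; since $a\notin\beta$ and $\alpha$ is simple, the vertex $a$ occurs only once in the resulting walk, so loop-erasure retains $e$, and because $d(a,x)\le 2R_0+\delta+O(1)$ the quasigeodesic constants grow only by $O(R_0+\delta)$.
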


We write $M'_{f_1} (a, A)$ as $M'_1 (a, A)$ and so on.
Bowditch claims that the following result holds for arbitrary $K$ in \cite[Prop.~8.6]{Bowditch12_RelHyp}.  However, the proof depends, in an essential manner, on an unstated hypothesis that $K$ has a countable vertex set.  The following proposition extends Bowditch's result to the general case, removing this restriction on the cardinality of $K$.

\begin{proposition}
\label{prop: Del K is compact}
The space $\Delta K$ is compact.
\end{proposition}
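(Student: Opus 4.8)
The goal is to show $\Delta K$ is compact, where $\Delta K = V(K) \cup \partial K$ carries the topology generated by the sets $M'(a,A)$ (equivalently $M'_1(a,A)$ or $P'_1(a,J)$). Since $\Delta K$ is Hausdorff, it suffices to show every net has a convergent subnet, or equivalently that every ultrafilter on $\Delta K$ converges. I would work with the ultrafilter formulation, or—more in the spirit of Bowditch—with maximal filters, since this dovetails with Gerasimov's Cauchy–Samuel picture and avoids cardinality issues inherent in the sequential argument that Bowditch's proof implicitly used. The strategy is: given an ultrafilter $\mathcal{F}$ on $\Delta K$, produce a candidate limit point $\xi \in \Delta K$ and verify that every basic neighborhood $P'_1(\xi, J)$ of $\xi$ lies in $\mathcal{F}$.

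\textbf{Constructing the limit.} Fix a basepoint $v_0 \in V(K)$. For each point $b \in \Delta K$ with $b \neq v_0$, choose a geodesic $[v_0, b]$; if $b = v_0$ treat it as the constant path. The first oriented edge of such a geodesic (when $b \neq v_0$) is an edge incident to $v_0$; since $K$ is fine, $v_0$ may have infinitely many incident edges, so this step already requires care. Here I would use the ultrafilter: consider the map sending $b \mapsto$ (first edge of $[v_0,b]$), and ask whether some single incident edge $e_1$ is the first edge for an $\mathcal{F}$--large set of $b$'s. This need not happen — but the alternative is that the first edges are "spread out", which I claim forces the limit to be $v_0$ itself, i.e. $\mathcal{F}$ converges to $v_0$. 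Precisely: if for every finite set $J$ of edges incident to $v_0$ the set of $b$ whose geodesic $[v_0,b]$ avoids $J$ (after $v_0$) is $\mathcal{F}$--large, then $P'_1(v_0, J) \in \mathcal{F}$ for all such $J$, so $\mathcal{F} \to v_0$ and we are done. Otherwise, there is a finite $J_1$ such that the set $S_1$ of $b$ with $[v_0,b]$ meeting $J_1$ immediately after $v_0$ is $\mathcal{F}$--large; by finiteness of $J_1$ and the ultrafilter property, we may pin down a single edge $e_1$ and a single vertex $v_1 = \tau(e_1)$ such that $[v_0,b]$ passes through $v_1$ as its first vertex after $v_0$, for an $\mathcal{F}$--large set of $b$. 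Iterate this at $v_1$, using the tail geodesics $[v_1, b]$ (which, up to the shortcutting/stability constant, are uniformly close to tails of $[v_0,b]$ — this is where Lemma~\ref{lem: shortcutting} enters, to guarantee the chosen geodesics fit together into simple quasigeodesics). This produces either a finite chain terminating at a vertex $\xi$ that is $\mathcal{F}$--large, or an infinite geodesic ray $v_0, v_1, v_2, \dots$ whose endpoint $\xi \in \partial K$ is the candidate limit.

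\textbf{Verifying convergence.} With $\xi$ in hand, I must show $P'_1(\xi, J) \in \mathcal{F}$ for every finite edge set $J$. The content is: for $\mathcal{F}$--almost every $b$, some simple $f_1$--quasigeodesic from $\xi$ to $b$ avoids $J$. When $\xi$ is a vertex reached after finitely many steps this is essentially built into the construction. When $\xi \in \partial K$: choose $N$ large enough that the initial segment $\sigma_N = [v_0, v_N]$ of the ray already "captures" $J$ in the sense that any geodesic from $v_N$ onwards toward a point of the relevant $\mathcal{F}$--large set stays boundedly away from $J$ — here one uses that $J$ is finite and $K$ is fine, so only finitely many circuits of bounded length through the edges of $J$ exist, giving a uniform bound on how $\delta$--thin triangles can double back; combined with the $\mathcal{F}$--large choice that geodesics $[v_0,b]$ pass through $v_N$, the shortcutting lemma assembles a simple $f_1$--quasigeodesic from $\xi$ to $b$ of the form (tail of the ray beyond some point) $\cup\, \omega\, \cup$ (tail of $[v_0,b]$ beyond $v_N$), with $\omega$ short, and this quasigeodesic avoids $J$ for a suitable choice of starting point along the ray.

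\textbf{Main obstacle.} The real difficulty — and the reason Bowditch's countable argument does not transfer verbatim — is precisely the non-sequential bookkeeping at vertices of infinite valence: a naive diagonal/subsequence construction chooses geodesics one point at a time and breaks down when there is no countable cofinal structure. Replacing subsequences by an ultrafilter (or maximal filter) handles the set theory cleanly, but then one must check at each stage that the ultrafilter genuinely "decides" which finite edge to follow, and that the tail geodesics chosen at $v_n$ are consistent (via shortcutting) with those chosen at $v_0$; keeping this consistency uniform — with constants depending only on $\delta$ — while $n \to \infty$ is the technical heart. Fineness is used exactly to control the finitely many short circuits through any finite edge set $J$, which is what prevents a quasigeodesic from being forced back through $J$ and is what ultimately makes $P'_1(\xi,J)$ large in $\mathcal{F}$.
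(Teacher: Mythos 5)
Your proposal is correct and follows essentially the same route as the paper's proof: the ultrafilter's edge-by-edge decision at each vertex, using the finite incident-edge neighborhood basis $P'_1(v_n,J)$, plays exactly the role of the paper's rowdy elements $a_A$ and cofinal subcollections $\Lambda_n$ of finite vertex sets, and your endgame (shortcutting the ray beyond $v_N$ against the tail of $[v_0,b]$, with everything staying far from $v_0$ while $J$ sits in a ball of radius $r<N-R_1$) is the paper's concluding estimate. Two cosmetic remarks: no stability constants are needed to keep tails consistent, since for $b$ in the current $\mathcal{F}$--large set the tail of the chosen geodesic $[v_0,b]$ is itself a geodesic from $v_n$, and the appeal to fineness via ``finitely many circuits through $J$'' in your verification is unnecessary --- the distance estimate plus the shortcutting constant $R_1$ already suffice.
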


\begin{proof}
We first note that, for each finite set $A$ of vertices of $K$, the finite collection $\mathcal{U}_A = \bigset{M'(a,A)}{a\in A}$ is an open cover of $\Delta(K)$.
Indeed, for each $x \in \Delta K$, there exists a geodesic from $x$ to some point $a\in A$ that contains no points of $A$ in its interior, so that $x \in M'(a,A)$.

Let $\mathcal{U}$ be an arbitrary open cover of $\Delta K$. 
In order to show that $\mathcal{U}$ has a finite subcover, it suffices to show that for some finite set $A\subset V(K)$ the finite cover $\mathcal{U}_A$ refines $\mathcal{U}$.
Suppose by way of contradiction that for every finite set $A\subset V(K)$, there is at least one \emph{rowdy} element $a_A \in A $ such that $M'(a_A, A)$ is not contained in any $U \in \mathcal{U}$.
Fix any vertex $x_0$ in $K$. For each finite $A$, choose a geodesic $\alpha_A$ from $x_0$ to $a_A$.

Observe that for each $a \in V(K)$, each $U \in \mathcal{U}$ containing $a$, and each finite set $B\subset V(K)$, there exists a finite set $A$ containing $B \cup \{a\}$ such that $M'(a,A) \subseteq U$.
Indeed, since $U$ is open, we may choose a finite set of vertices $F=F(a,U)$ such that $a \in M'(a, F) \subset U$. 
But then the finite set $A= F \cup B \cup \{a\}$ satisfies $M'(a, A) \subseteq M'(a,F) \subseteq U$. In particular, any finite set $B$ of vertices with rowdy element $a_B\in B$, can be increased to a larger finite set $A$ such that $a_B$ is no longer rowdy with respect to $A$.

We will construct a geodesic ray $\epsilon$ with vertices $(x_0,x_1,x_2,\dots)$ and a nested sequence of cofinal collections of finite vertex sets $\Lambda_0 \subseteq \Lambda_1 \subseteq \cdots$ such that, if $A\in \Lambda_n$, the geodesic $\alpha_A$ shares its first $n$ edges with $\epsilon$.
(A collection $\Lambda$ of finite sets is \emph{cofinal} if for each finite set $A$, some member of $\Lambda$ contains $A$.)
We construct $x_{n+1}$ and $\Lambda_{n+1}$ recursively, beginning with the previously chosen $x_0$ and the collection of all finite sets for $\Lambda_0$.

Assume that $x_{n}$ and $\Lambda_{n}$ have been previously constructed.
Clearly $x_n \in U$ for some $U \in \mathcal{U}$, so that $x_n$ has a basic neighborhood $P_1'(x_n, J_n) \subseteq U$ for some finite set of edges $J_n$ incident to $x_n$.
Let $\Lambda'_n \subseteq \Lambda_n$ be the subcollection of all finite sets of vertices $B$ large enough to contain all endpoints of the edges in $J_n$ and large enough that $x_n$ is not rowdy with respect to $B$.

Suppose $B\in \Lambda'_n$.  Then the geodesic $\alpha_B$ from $x_0$ to $a_B$ shares its first $n$ edges with $\epsilon$ and has length at least $n+1$.
We show that the edge of $\alpha_B$ following $x_n$ must be a member of the set $J_n$.
If not, then by Lemma~\ref{lem: shortcutting}, for each $b \in M'(a_B,B)$ there is a simple $f_1$--quasigeodesic $\gamma$ from $x_n$ to $b$ whose first edge is contained in $\alpha_B$ and, thus, is not contained in $J_n$.
In particular, the quasigeodesic $\gamma$ avoids $J_n$, so that $b \in P'_1(x_n, J_n)$. Consequently, $M'(a_B, B) \subseteq P'_1(x_n, J_n) \subseteq U$, contradicting the assumption that $a_B$ is rowdy in $B$. Therefore, the edge of $\alpha_B$ following $x_n$ must be a member of $J_n$ as claimed.
Since $J_n$ is finite, we may pass to a cofinal subcollection $\Lambda_{n+1} \subseteq \Lambda'_n$ such that all geodesics $\alpha_{C}$ with $C \in \Lambda_{n+1}$ involve the same edge of $J_n$. Letting $x_{n+1}$ be the other endpoint of this edge (distinct from $x_n$) completes the recursive construction of the ray $\epsilon$.

Let $y \in \partial K$ be the endpoint of $\epsilon$. There is some $U \in \mathcal{U}$ such that $ y \in U$. There exists some finite set $S \subset V(K)$ and a basic neighborhood $M'_1(y, S) \subseteq U$. As $S$ is finite, it lies in the ball of radius $r$ about $x_0$ for some $r < \infty$.

Choose $n > r + R_1 + \delta +  1$ where $R_1$ is the constant from Lemma~\ref{lem: shortcutting} and $\delta$ is the hyperbolicity constant of $K$.
Choose any $A \in \Lambda_n$ large enough that $S\subseteq A$.
We claim that $M'(a_A, A) \subseteq M'_1(y, S)$. 

Let $\alpha$ be a geodesic from $a_A$ to $y$. Consider the thin geodesic triangle $T_{thin}$ with vertices $y, x_n, a_A$ such that the geodesic $\alpha$ is the side of the triangle from $y$ to $a_A$, the side from $x_n$ to $a_A$ is contained in the geodesic from $x_0$ to $a_A$ that intersects $\epsilon$ up to length $n$ and the geodesic from $x_n$ to $y$ is contained in $\epsilon$. The geodesic $\alpha$ is in the $\delta$--neighborhood of the union of geodesics $[x_n,y]$ and $[x_n,a_A]$. In particular if $z$ is any vertex of $\alpha$ then $d(x_0, z) \geq n - \delta = r + R_1 + 1$. Hence $d(x_0, z) > r$, implying that $z \not \in S$.

Now consider any $x \in M'(a_A, A)$.
Since $S \subseteq A$, there exists a geodesic $\beta$ from $a_A$ to $x$ with vertex set disjoint from $S$. Applying Lemma~\ref{lem: shortcutting}, we obtain a simple $f_1$--quasigeodesic $\gamma = \alpha' \cup \omega \cup \beta'$ from $y$ to $x$ where $\alpha' \subseteq \alpha$ and $\beta' \subseteq \beta$ and $\omega$ has length at most $R_1$.
Then $\gamma$ misses $S$. Indeed, if $s \in S$ intersects $\gamma$ then $s \in V(\omega)$. Let $t_1$ be the vertex at which $\alpha'$ meets $\omega$.
Then $d(x_0, t_1) < d(x_0, s) + d(s, t_1) \leq r + R_1$, contradicting the fact that $d(x_0, t_1) > r + R_1 + 1$.  Hence $V(\gamma) \cap S = \emptyset$. Therefore $x \in M'_1(y, S)$, which establishes that $M'(a_A, A) \subseteq M'_1(y, S) \subseteq U$. 
However, this conclusion contradicts our initial assumption that $a_A$ is rowdy in $A$.
\end{proof}

Next we define a uniformity $\mathcal{D}$ on $\Delta K$ that induces the given topology.

\begin{definition}
\label{def: uniformity on Delta K}
Let $J$ be any finite set of edges in $K$. Define an entourage $D_J \subseteq \Delta K \times \Delta K$ that contains all pairs $(a,b)$ such that there exists a geodesic from $a$ to $b$ that does not contain an edge from $J$. In other words, $(a,b) \in D_J$ if and only if $b \in P'(a,J)$.
The proof of Proposition~\ref{prop: Del K is compact} shows that the 
family of entourages $D_J$ for all finite edge sets $J$ is a base for the canonical uniform structure $\mathcal{D}$ on the compact space $\Delta K$ (see \cite[\S 36]{Willard_Topology}).
\end{definition}

\begin{remark}[Gerasimov's definition]
To any fine $\delta$--hyperbolic graph $K$ we have associated a canonical compact Hausdorff space $\Delta K$ that contains the vertex set $V(K)$ as a dense subspace.
Using a different construction, Gerasimov associates a compact Hausdorff space $M=M(K)$ to $K$ such that $M$ also contains $V(K)$ as a dense subspace (see \cite{Gerasimov12} and \cite[\S 2]{GerasimovPotyagailo16}).
We note that the space $M$ of Gerasimov coincides with the space $\Delta K$ constructed above, in the sense that the identity map on $V(K)$ extends to a unique homeomorphism $M\to\Delta K$.

Indeed, the canonical uniform structure $\mathcal{D}$ on $\Delta K$ restricts to a uniform structure, also denoted $\mathcal{D}$, on $V(K)$.
By Proposition~\ref{prop: Del K is compact}, the space $\Delta K$ is compact and, thus, complete as a uniform space. 
As Gerasimov defines $M$ to be the Cauchy--Samuel completion of this same uniform space $\bigl( V(K),\mathcal{D} \bigr)$, the claim follows immediately from the uniqueness of completions.
\end{remark}

If $(\Gamma,\P)$ has a relatively hyperbolic action on the fine hyperbolic graph $K$, then the action of $\Gamma$ on its vertex set $V$ induces a relatively hyperbolic convergence group action of the pair $(\Gamma,\P)$ on $\Delta K = M(K)$ by \cite{Gerasimov12}.
Thus $\Delta K$ may be identified with the Bowditch boundary $\boundary(\Gamma,\P)$ using the uniqueness theorem of \cite{Bowditch12_RelHyp} in the finitely generated case and of \cite{GerasimovPotyagailo16} in general (see Definition~\ref{def:BowditchBoundary}).

\section{Trees of compacta}
\label{sec:TreesOfCompacta}

Suppose $(\Gamma,\P)$ is relatively hyperbolic and $\Gamma$ acts on a simplicial tree $T$ relative to $\P$ with infinite, finitely generated parabolic edge stabilizers.
In this section, we show that the boundary of $(\Gamma,\P)$ may be described as the limit of a tree of compacta formed by gluing the boundaries of the vertex groups along parabolic points in the pattern of the tree $T$ and then compactifying.
The main results of this section are inspired by the treelike structures associated to peripheral splittings in \cite{Bowditch01_Peripheral} and \cite{Dahmani03}.
The inverse limit construction is a minor variation of a construction due to \Swiatkowski\ \cite{Swiatkowski20}, which itself is modeled on the Ancel--Siebenmann characterization of trees of manifolds \cite{AncelSiebenmann85,Jakobsche91}.

\subsection{The limit of a tree system}

\begin{definition}
\label{def: tree system of metric compacta}
A \emph{tree system} $ \Theta$ of compacta consists of the following data:
\begin{enumerate}
    \item A bipartite tree $T$ with bipartition $V(T) = V_0(T) \sqcup V_1(T)$.
    \item To each vertex $v$ of $V_0(T)$, we associate a compact space $M_v$ called a \emph{component space}. For each $w \in V_1$ we associate a one-point space $M_w$, known as a \emph{peripheral point}.
    \item To each oriented edge $\vec{e} \in E(T)$, we associate a one-point space $\Sigma_{\vec{e}}$ and a map $i_{\vec{e}} \colon \Sigma_{\vec{e}} \to M_{t(\vec{e}\,)}$.
    We require that in each component space $M_v$ the adjacent edge spaces have distinct images.
\end{enumerate}
\end{definition}

Let $\bigcup \Theta$ denote the quotient space of $\bigsqcup_{v \in V(T)} M_v$ by the equivalence relation generated by $i_{\vec{e}}(\Sigma_{\vec{e}}) \sim i_{-\vec{e}} (\Sigma_{-\vec{e}})$ for all oriented edges $ \vec{e} \in E(T)$.

\begin{definition}
\label{def: subtree system}
Suppose $S$ is any subtree of $T$. Let $V(S) = V_0(S) \sqcup V_1(S)$ and $E(S)$ denote the set of vertices and the set of edges respectively of $S$.
Let $\Theta_S$ be the restriction of the tree system $\Theta$ to the subtree $S$.
\end{definition}

\begin{definition}[Limit of a tree system]
\label{def: limitOftreeSystem}
For each finite subtree $F$ of $T$, the \emph{partial union} is the compact space $M_F = \bigcup \Theta_F $.
For a pair of subtrees $F_1 \subseteq F_2$, the \emph{collapse map} $f_{F_1 F_2} \colon M_{F_2} \to M_{F_1}$ 
is the quotient map obtained by collapsing $M_v$ to a point for each $v \in V(F_2) - V(F_1)$.
Notice that $ f_{F_1 F_2}\of f_{F_2 F_3} = f_{F_1 F_3}$ whenever $ F_1 \subseteq F_2 \subseteq F_3 $. Hence the system of spaces $M_F$ and maps $f_{FF'}$ with $F \subseteq F' $ is an inverse system of compacta indexed by the poset of all finite subtrees $F$ of $T$. 

The \emph{limit} $\lim \Theta $ \emph{of the tree system} $ \Theta $ is the inverse limit of the above inverse system. Observe that $ \lim \Theta $ is compact and Hausdorff, since it is an inverse limit of compact Hausdorff spaces.
\end{definition}

\subsection{The boundary as a tree system}
\label{sec:BoundaryTreeSystem}

Suppose $(\Gamma,\P)$ is relatively hyperbolic.
Let $T$ be any simplicial tree on which $\Gamma$ acts relative to $\P$ with finitely many orbits of edges such that all edge stabilizers are infinite, parabolic, and finitely generated.
Suppose $T$ is equal to its tree of cylinders.
Fix a relatively hyperbolic action of the pair $(\Gamma,\P)$ on a fine $\delta$--hyperbolic graph $K$ that is compatible with this splitting in the sense of Definition~\ref{defn:Compatible}.

\begin{definition}
\label{def: ends}
A \emph{ray} in a tree is a subgraph isomorphic to $[0,\infty)$ with its natural structure as an infinite graph.
Two rays are \emph{equivalent} if their intersection is a ray. An equivalence class of rays is an \emph{end} of the tree.
\end{definition}

\begin{definition}
\label{def:TreeMap}
Let $K'$ be the barycentric subdivision of $K$.
We consider $K'$ and $T$ as length metric spaces in which each edge has length one.
Consider the map $f \colon K'\to T$ defined as follows, using the notation of Definition~\ref{defn:Compatible}.
Recall that $K$ is formed as a quotient of the disjoint union of graphs $K_v$ for each $v \in V_0(T)$ and one-point graphs $K_w$ for each $w \in V_1(T)$.
Each edge $e$ of $K$ lies in the image of $K_v$ for a unique vertex $v \in V_0(T)$.
We map the barycenter $x$ of the edge $e$ to this vertex $v$.
If $y$ is a vertex of $K$ contained in the image of a unique subgraph $K_v$, we also map the vertex $y$ to the vertex $v$.
If $y$ is contained in the image of $K_v$ for more than one vertex $v \in V_0(T)$, then $y$ is the image of the vertex $K_w$ for a unique vertex $w \in V_1(T)$, and $w$ is adjacent in $T$ to all such vertices $v$.
In this case, we map the vertex $y$ to the vertex $w$.
Observe that, whenever vertices $x,y$ of the barycentric subdivision $K'$ are adjacent, their images $f(x)$ and $f(y)$ either coincide or are adjacent. We extend $f$ linearly to the edges of $K'$ in the natural way.
\end{definition}

Let $\boundary_T K \subset \boundary K$ denote the set of points represented by geodesic rays that have bounded intersection with each subgraph $K_v$ of $K$.

\begin{lemma}
\label{lem:BoundaryOfTree}
The map $f \colon K' \to T$ induces a bijection $\boundary f \colon \boundary_T K \to \Ends(T)$.
\end{lemma}

\begin{proof}
The map $f$ is Lipschitz, by construction.
Since each subgraph $K_v$ is convex and its image in $T$ has diameter $2$, it follows from Kapovich--Rafi \cite[Prop.~2.5]{KapovichRafi14} that $f$ is alignment preserving in the sense of Dowdall--Taylor \cite{DowdallTaylor17}.
The result is a direct consequence of \cite[Thm.~3.2]{DowdallTaylor17}.
\end{proof}

The \emph{tree system associated to $T$} is the system $\Theta$ with underlying tree $T$ whose vertex spaces are the compacta $M_v = \Delta K_v$ for $v \in V(T)$ such that each edge map $i_{\vec{e}}$ has image equal to the unique point of $\Delta K_{t(\vec{e}\,)}$ fixed by $\Gamma_e$.
By the definition of the tree of cylinders, if $\vec{e}_0$ and $\vec{e}_1$ are distinct oriented edges of $T$ with terminal vertex $v$, the parabolic groups $\Gamma_{e_0}$ and $\Gamma_{e_1}$ are not co-elementary.
Therefore, the vertices of $K_v$ fixed by $\Gamma_{e_0}$ and $\Gamma_{e_1}$ are distinct.

\begin{proposition}
\label{prop:ClassificationOfPoints}
There is a function $\rho\colon \bigl( \bigcup \Theta \bigr)\cup \Ends(T) \to \Delta K$ with the following properties:
\begin{enumerate}
    \item $\rho$ is a bijection.
    \item $\rho$ is continuous on $\bigcup \Theta$.
    \item For each $v \in V(T)$ 
    the restriction of $\rho$ to $\Delta K_v$ is the natural inclusion $\Delta K_v \to \Delta K$.
    \item The restriction of $\rho$ to $\Ends(T)$ is the map $(\boundary f)^{-1} \colon \Ends(T) \to \boundary_T K$, where $\boundary f$ is the map given by Lemma~\ref{lem:BoundaryOfTree}.
\end{enumerate}
\end{proposition}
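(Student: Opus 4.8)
The idea is to build $\rho$ piece by piece: first define it on each $\Delta K_v$ as the natural inclusion $\Delta K_v \hookrightarrow \Delta K$ (these inclusions are well-defined because each $K_v$ is a convex subgraph of the fine hyperbolic graph $K$, so $\Delta K_v$ embeds in $\Delta K$; this also needs the observation that the point of $\Delta K_v$ fixed by an edge group $\Gamma_e$ maps to the corresponding vertex of $K$, so the inclusions are compatible with the gluing relation $i_{\vec e}(\Sigma_{\vec e}) \sim i_{-\vec e}(\Sigma_{-\vec e})$). This yields a well-defined map $\rho|_{\#\Theta(T_c)} \colon \#\Theta(T_c) \to \Delta K$. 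On $\Ends(T_c)$ we set $\rho = \boundary f^{-1}$, landing in $\boundary_T K \subset \boundary K \subset \Delta K$ via Lemma~\ref{lem:BoundaryOfTree}.

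\textbf{Injectivity.} First I would check that the restriction to $\#\Theta(T_c)$ is injective. Two points of distinct vertex spaces $\Delta K_v$, $\Delta K_{v'}$ are identified in $\#\Theta(T_c)$ only through a chain of edge identifications along the segment from $v$ to $v'$ in $T_c$; since the edge maps $i_{\vec e}$ have distinct images in each component space (guaranteed by the tree-of-cylinders construction, as noted just before the proposition: $\Gamma_{e_0}$ and $\Gamma_{e_1}$ non-co-elementary forces distinct fixed vertices in $K_v$), the induced map on $\#\Theta(T_c)$ into $\Delta K$ separates points of different vertex spaces except at the prescribed gluing points. The key geometric input here is that the subgraphs $K_v$ meet pairwise in at most a single vertex and are convex in $K$, so their images in $\Delta K$ overlap only in the expected parabolic points. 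For injectivity across $\#\Theta(T_c)$ and $\Ends(T_c)$: a point of $\Ends(T_c)$ corresponds to a geodesic ray with bounded intersection with every $K_v$, hence lies in $\boundary_T K$, which is disjoint from every $\Delta K_v$ (a point of $\Delta K_v$ is either a vertex of $K$ or represented by a ray trapped in $K_v$); and $\boundary f^{-1}$ is injective by Lemma~\ref{lem:BoundaryOfTree}. This gives (1) modulo surjectivity.

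\textbf{Surjectivity.} This is the main obstacle and the heart of the proof. Given a point $p \in \Delta K$, I would distinguish cases. If $p \in V(K)$ it lies in some $K_v$, hence in the image. If $p \in \boundary K$, represent $p$ by a geodesic ray $\gamma$ from a basepoint. Either $\gamma$ has bounded intersection with every $K_v$—in which case $f(\gamma)$ is a ray in $T_c$ (using that $f$ is Lipschitz and alignment preserving, from the proof of Lemma~\ref{lem:BoundaryOfTree}) and $p = \boundary f^{-1}$ of the corresponding end, so $p \in \rho(\Ends(T_c))$—or $\gamma$ has unbounded intersection with some $K_v$. In the latter case, convexity of $K_v$ forces $\gamma$ to be eventually contained in $K_v$ (a geodesic that leaves a convex set and returns would violate convexity in the appropriate sense), so $p \in \boundary K_v \subset \Delta K_v$. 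The delicate point is verifying the dichotomy cleanly: one must show that "unbounded intersection with $K_v$" upgrades to "eventually inside $K_v$," which uses convexity of $K_v$ together with the fact that $f(\gamma)$, being alignment preserving, cannot leave and re-enter the diameter-$2$ set $f(K_v)$ infinitely often without the preimage being eventually trapped. I would package the end-behavior of $f\circ\gamma$ in $T_c$ — it converges either to an end or to a vertex $v$ — and match the two outcomes to the two cases above.

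\textbf{Continuity of $\rho|_{\#\Theta(T_c)}$.} For (2), I would argue locally. A point of $\#\Theta(T_c)$ lies in $\Delta K_v$ for some $v$ (possibly in finitely many, if it is a gluing point of finite valence — but actually the valence of a parabolic point may be infinite, which is exactly why one must be careful). Near a point $p$ that is not a gluing point, $\rho$ agrees with the continuous inclusion $\Delta K_v \hookrightarrow \Delta K$. Near a gluing point $p$ fixed by some parabolic $\Gamma_e$, a basic neighborhood of the image $\rho(p)$ in $\Delta K$ is some $P'(\rho(p), J)$ for a finite edge set $J$; I would show its preimage is open in $\#\Theta(T_c)$ by checking that all but finitely many of the adjacent component spaces $\Delta K_{v'}$ map entirely into $P'(\rho(p),J)$ — this is because a geodesic in $K$ from $\rho(p)$ into a far-away branch $K_{v'}$ must pass through the vertex $\rho(p)$ and then stay in $K_{v'}$, avoiding the finite set $J$ once $K_{v'}$ is far enough from $J$ (fineness of $K$ ensures only finitely many branches come near $J$). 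Properties (3) and (4) hold by construction of $\rho$.

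The step I expect to be the main obstacle is the surjectivity dichotomy — proving that a geodesic ray in $K$ with unbounded intersection with a convex subgraph $K_v$ is eventually contained in $K_v$ — since this is where the interaction between the hyperbolic geometry of $K$, convexity of the $K_v$, and the alignment-preserving property of $f$ all come together.
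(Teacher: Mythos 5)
Your proposal is correct and follows essentially the same route as the paper: decompose $\Delta K$ as $\boundary_T K \sqcup \bigcup_v \Delta K_v$ using convexity of the subgraphs $K_v$ (the step you flag as the main obstacle is exactly the paper's one-sentence observation that a geodesic ray with unbounded intersection with a convex $K_v$ eventually lies in $K_v$ and hence represents a point of $\boundary K_v$), identify $\boundary_T K$ with $\Ends(T_c)$ via Lemma~\ref{lem:BoundaryOfTree}, and note that distinct vertex spaces meet only in the glued vertices. The only real difference is cosmetic: since $\#\Theta(T_c)$ carries the quotient topology of $\bigsqcup_v \Delta K_v$, continuity of $\rho$ on $\#\Theta(T_c)$ follows immediately from continuity of each inclusion $\Delta K_v \to \Delta K$, so your more elaborate local analysis at gluing points is unnecessary.
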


\begin{proof}
Since the subgraphs $K_v$ are convex, a geodesic ray of $K$ having unbounded intersection with some $K_v$ represents a point of $\boundary K_v$. Thus the complement in $\Delta K$ of the set $\boundary_T K$ equals the union of the sets $\Delta K_v$.

For each $v,v' \in V_0(T)$, the sets $\Delta K_v$ and $\Delta K_{v'}$ intersect precisely when the subgraphs $K_v$ and $K_{v'}$ intersect, in which case their intersection is a single vertex $w$ of $K$ and the corresponding vertex of $V_1(T)$ is incident to $v$ and $v'$. Thus the system of inclusions $\Delta K_v \to \Delta K$ induces a continuous injection $\bigcup \Theta \to \Delta K$ with image $\Delta K \setminus \boundary_T K$.
\end{proof}

\begin{definition}[Halfspaces]
Let $\vec{e}$ be any oriented edge of $T$.
Removing the open edge $e$ from $T$ leaves two components $T(\vec{e}\,)$ and $T(-\vec{e}\,)$, the first containing the vertex $t(\vec{e}\,)$ and the second containing $o(\vec{e}\,)$.
The \emph{halfspaces} of $\relbndry$ corresponding to $\vec{e}$ are the sets
$H(\vec{e}\,)$ and $H(-\vec{e}\,)$, where
\[
   H(\vec{e}\,) = \rho \Biggl(\Ends T(\vec{e}\,) \cup  {\bigcup_{v \in V(T(\vec{e}\,))} \!\!\!\!\Delta K_v} \Biggr).
\]
\end{definition}

\begin{proposition}
For each oriented edge $\vec{e}$ of $T$, the halfspaces $H(\vec{e}\,)$ and $H(-\vec{e}\,)$ are closed sets of $\Delta K$ with union $\Delta K$ and with intersection the one-point space $K_w$, where $w$ is the vertex of $V_1(T)$ incident to $e$.
\end{proposition}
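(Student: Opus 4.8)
The plan is to transport the statement across the bijection $\rho$ of Proposition~\ref{prop:ClassificationOfPoints} into a combinatorial statement about the tree $T_c$ and the subgraphs $K_v$, and to treat closedness separately by hand. Write $w$ for the endpoint of $e$ lying in $V_1(T_c)$, let $y_0 \in V(K)$ be the single vertex that is the image of $K_w$, and set $K_{\vec e} = \bigcup_{v\in V(T_c(\vec e))} K_v$ and $K_{-\vec e} = \bigcup_{v\in V(T_c(-\vec e))} K_v$, both connected subgraphs of $K$.

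First, since $T_c$ is a tree, removing the open edge $e$ splits every ray into one of the two sides, so $\Ends(T_c)=\Ends T_c(\vec e)\sqcup\Ends T_c(-\vec e)$; together with $V(T_c)=V(T_c(\vec e))\sqcup V(T_c(-\vec e))$ and $\#\Theta(T_c)=\bigcup_{v\in V(T_c)}\Delta K_v$, and the bijectivity of $\rho$, this yields $H(\vec e)\cup H(-\vec e)=\Delta K$. For the intersection, note that the $\Ends$ parts on the two sides are disjoint and disjoint from every $\Delta K_v$, so $\rho^{-1}(H(\vec e))\cap\rho^{-1}(H(-\vec e))$ reduces to $\bigcup\,\Delta K_{v'}\cap\Delta K_{v''}$ over $v'$ on the $\vec e$ side and $v''$ on the $-\vec e$ side. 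As in the proof of Proposition~\ref{prop:ClassificationOfPoints}, using the observation recorded before it that adjacent edge spaces have distinct images, each such intersection is empty or a single point $\rho(\Delta K_{w'})$ for a $V_1$-vertex $w'$ lying on and adjacent to both endpoints of the $T_c$-segment $[v',v'']$; since that segment must traverse $e$, a short case analysis on its length ($0$, $1$, or $2$) forces $w'=w$. Because $w$ lies on the $\vec e$ side while its neighbour $o(\vec e)$ lies on the $-\vec e$ side, the point $y_0$ does occur in the intersection, so $H(\vec e)\cap H(-\vec e)=\{y_0\}$, which is the one-point space $K_w$. The same analysis shows $V(K_{\vec e})\cap V(K_{-\vec e})=\{y_0\}$ and that no edge of $K$ joins $V(K_{\vec e})\setminus\{y_0\}$ to $V(K_{-\vec e})\setminus\{y_0\}$; consequently each of $K_{\vec e}$, $K_{-\vec e}$ is convex in $K$ and $y_0$ is a ``gate'' lying on every path from one side to the other.

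For closedness it suffices to show $\Delta K\setminus H(\vec e)=H(-\vec e)\setminus\{y_0\}$ is open, so fix $x$ in this set. If $x\in V(K)$, then $x\in V(K_{-\vec e})\setminus\{y_0\}$ and I claim the basic neighbourhood $M'(x,\{y_0\})$ is disjoint from $H(\vec e)$: for $b\in H(\vec e)\setminus\{y_0\}$, every geodesic from $x$ to $b$ must pass through $y_0$ --- immediate if $b\in V(K)$, and, if $b\in\partial K$, because $b\in H(\vec e)\cap\partial K$ forces every geodesic ray representing $b$ eventually into $K_{\vec e}$ (using convexity and the gate), so that a geodesic from $x$ to $b$ starts in $K_{-\vec e}\setminus\{y_0\}$ and ends up in $K_{\vec e}$ --- while $y_0\notin M'(x,\{y_0\})$ since any geodesic to $y_0$ contains $y_0$. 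If $x\in\partial K$, fix a finite vertex set $A\ni y_0$ and use the basic neighbourhood $M'_1(x,A)$: since $x\notin H(\vec e)$, a geodesic ray representing $x$ is (being simple) eventually confined to $K_{-\vec e}$ and escapes every ball about $y_0$, so by stability of quasigeodesics in the hyperbolic graph $K$ the $x$-end of any simple $f_1$-quasigeodesic $\gamma$ from $x$ to a point $b\in H(\vec e)\setminus\{y_0\}$ has a vertex in $K_{-\vec e}$, while its $b$-end has a vertex in $K_{\vec e}$; hence $\gamma$ passes through the gate $y_0$, so $\gamma\cap A\ne\emptyset$, i.e.\ $b\notin M'_1(x,A)$; and $y_0\notin M'_1(x,A)$ since any path from $x$ to $y_0$ terminates at $y_0\in A$. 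In both cases $x$ has a neighbourhood missing $H(\vec e)$, so $H(\vec e)$, and by symmetry $H(-\vec e)$, is closed.

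The step I expect to be the main obstacle is the last one --- showing that a quasigeodesic joining $x$ to a point on the far side of the gate really does pass through $y_0$. The neighbourhood bases on $\Delta K$ are defined using $f_1$-quasigeodesics rather than geodesics, and a quasigeodesic can stray a bounded distance from the convex subgraph that it fellow-travels; turning the fellow-traveling estimate into the assertion that $\gamma$ has an actual vertex in each of $K_{\vec e}$ and $K_{-\vec e}$ requires exploiting that $y_0$ is the unique vertex of $K$ shared by the two sides and therefore lies on every path between them, which pins down where $\gamma$ can be near the gate.
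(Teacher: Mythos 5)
Your proposal is correct and follows essentially the same route as the paper: transport everything through the bijection $\rho$ of Proposition~\ref{prop:ClassificationOfPoints}, so that the statement reduces to the assertion that $K$ is the union of the two subgraphs $K_{\vec e}$ and $K_{-\vec e}$ meeting in the single ``gate'' vertex $y_0 = K_w$, with the union and intersection read off combinatorially and closedness proved by exhibiting, for each point off a halfspace, a basic neighborhood missing that halfspace. The only place you diverge is the ideal-point case of the closedness argument, where you switch to the quasigeodesic neighborhoods $M'_1(x,A)$ and invoke the Morse lemma; this is exactly the step you flag as the main obstacle, and it is avoidable. The paper uses the geodesic-defined neighborhood $M'\bigl(x,\{y_0\}\bigr)$ uniformly at every point $x$ of $\Delta K_{-\vec e}\setminus\{y_0\}$, vertex or ideal: a geodesic from $x$ to $b$ whose vertex set avoids $y_0$ can never cross between the two sides (every edge of $K$ lies in one of them and the only shared vertex is $y_0$), and since its $x$-end is eventually in $K_{-\vec e}$ the whole geodesic lies in $K_{-\vec e}\setminus\{y_0\}$, forcing $b\in \Delta K_{-\vec e}\setminus\{y_0\}$. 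So no stability estimate is needed and the worry about a quasigeodesic straying near the gate simply does not arise; your version is correct but carries unnecessary machinery.
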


\begin{proof}
According to Proposition~\ref{prop:ClassificationOfPoints}, it suffices to show the following.
Suppose $K$ is a fine $\delta$--hyperbolic graph that is the union of two fine $\delta$--hyperbolic graphs $K_1$ and $K_2$ whose intersection is a single vertex $x$.
Then $\Delta K_1$ and $\Delta K_2$ are closed sets with union $\Delta K$ and with intersection $\{x\}$.
The assertions about the union and intersection are evident since, as a set, $\Delta K$ is the union of the vertex set and the Gromov boundary of $K$.

For each $y \in \Delta K_2 \setminus \{x\}$, the basic open neighborhood $M'\bigl( y, \{x\} \bigr)$ is contained in $\Delta K_2 \setminus\{x\}$.
Thus $\Delta K_2 \setminus\{x\}$ is open in $\Delta K$, and its complement $\Delta K_1$ is closed.  By symmetry, $\Delta K_2$ is also closed.
\end{proof}

A family of subsets $\mathcal{C}$ of a topological space $X$ is \emph{null} if for each open cover $\mathcal{U}$ of $X$, all but finitely many members of $\mathcal{C}$ are $\mathcal{U}$--small.  A set is \emph{$\mathcal{U}$--small} if it is contained in some $U \in \mathcal{U}$.
The following is a variation of a well-known result in decomposition space theory.

\begin{proposition}
\label{prop:NullUSC}
If $\mathcal{C}$ is a null family of closed, pairwise disjoint subsets of a compact Hausdorff space $X$, the quotient space $X/\mathcal{C}$ is Hausdorff.
\end{proposition}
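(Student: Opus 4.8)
The plan is to show directly that $X/\mathcal{C}$ satisfies the Hausdorff separation axiom by separating two distinct points with disjoint open saturated sets in $X$. Let $q \colon X \to X/\mathcal{C}$ be the quotient map. A subset $U \subseteq X$ is \emph{saturated} if it is a union of members of $\mathcal{C}$ together with singletons $\{x\}$ for $x$ not in any member of $\mathcal{C}$; equivalently $U = q^{-1}\bigl(q(U)\bigr)$. It suffices to prove: given disjoint closed sets $P, Q \subseteq X$, each of which is either a single point lying in no member of $\mathcal{C}$ or an element of $\mathcal{C}$, there are disjoint saturated open sets $U \supseteq P$ and $V \supseteq Q$.

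First I would observe that the key consequence of nullness is the following finiteness statement: if $W$ is any open set containing a fixed member $C_0 \in \mathcal{C}$ (or a fixed point), then only finitely many members of $\mathcal{C}$ fail to be contained in $W$. Indeed, apply the definition of null to the open cover $\mathcal{U} = \{W, X \setminus C_0\}$ (using that $C_0$, or the point, is closed): all but finitely many $C \in \mathcal{C}$ are $\mathcal{U}$--small, and a member $C$ disjoint from $C_0$ that is $\mathcal{U}$--small but not contained in $W$ would have to lie in $X \setminus C_0$, which is allowed — so this does not immediately work and I need a sharper argument. The correct approach: take an open star refinement $\mathcal{V}$ of a suitable cover; by nullness all but finitely many $C \in \mathcal{C}$ are $\mathcal{V}$--small, hence each such $C$ lies in a single $V \in \mathcal{V}$, hence $C$ and its star $\Star(C,\mathcal{V})$ both lie in a single $U \in \mathcal{U}$. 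This is the mechanism that lets us thicken all but finitely many elements of $\mathcal{C}$ simultaneously to saturated open sets.

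Here is the construction. Cover $X$ by open sets $\mathcal{U}$ so that $P$ and $Q$ each lie in members with disjoint closures (using compact Hausdorff normality to first choose disjoint open $U_0 \supseteq P$, $V_0 \supseteq Q$ with $\overline{U_0} \cap \overline{V_0} = \emptyset$, then refine). Pass to an open star refinement $\mathcal{V}$. By nullness, the finite set $\mathcal{C}_{\mathrm{bad}}$ of members of $\mathcal{C}$ that are not $\mathcal{V}$--small is finite; enlarge it to also include the at most one member equal to $P$ and the at most one equal to $Q$. For each $C \in \mathcal{C} \setminus \mathcal{C}_{\mathrm{bad}}$, the set $\Star(C,\mathcal{V})$ is open, contains $C$, and lies inside some member of $\mathcal{U}$, hence is contained in $U_0$ or disjoint from $U_0$ — and likewise for $V_0$. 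Now set $U$ to be $U_0$ minus the (finitely many) members of $\mathcal{C}_{\mathrm{bad}}$ that are disjoint from $P$ — these are closed, so this is open — and then further remove, for the finitely many elements $C \in \mathcal{C}_{\mathrm{bad}} \setminus \{P\}$, a closed neighborhood separating $C$ from $P$; simultaneously add in $\Star(C,\mathcal{V})$ for those $C \in \mathcal{C}\setminus\mathcal{C}_{\mathrm{bad}}$ whose star meets $U_0$, so that $U$ becomes saturated with respect to those elements. Do the symmetric construction for $V$ inside $V_0$. Since $\overline{U_0}\cap\overline{V_0}=\emptyset$ and each star added is contained in one of them, $U$ and $V$ remain disjoint; by construction both are open, saturated, and contain $P$, $Q$ respectively.

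The main obstacle I anticipate is bookkeeping: one must handle the finitely many ``bad'' elements of $\mathcal{C}$ by hand (using normality of $X$ to separate each from the relevant point or element), while handling the cofinitely many ``good'' elements uniformly via the star refinement, and then verify that the resulting sets $U,V$ are genuinely saturated (every member of $\mathcal{C}$ is either entirely inside or entirely outside each of them) and genuinely open after all the finite additions and deletions. The disjointness of $U$ and $V$ is then automatic from the separation $\overline{U_0}\cap\overline{V_0}=\emptyset$, provided every modification stays within $U_0$ or $V_0$ respectively. Once $U$ and $V$ are produced, $q(U)$ and $q(V)$ are the required disjoint open neighborhoods in $X/\mathcal{C}$, since $q$ is a quotient map and $U,V$ are saturated open sets.
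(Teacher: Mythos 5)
Your overall plan (separate the two classes by disjoint \emph{saturated} open sets obtained from disjoint open sets $U_0 \supseteq P$, $V_0 \supseteq Q$, exploiting nullness through a star refinement) is the same strategy as the paper's, but the implementation has a genuine gap at the thickening step. You fix one cover $\mathcal{U}$ for both points --- in effect $\{U_0, V_0, X\setminus(P\cup Q)\}$ --- and claim that for a $\mathcal{V}$--small $C$ the star $\Star(C,\mathcal{V})$, being contained in a single member of $\mathcal{U}$, is \emph{contained in $U_0$ or disjoint from $U_0$}. That dichotomy is false: the member containing the star may be $X\setminus(P\cup Q)$, which overlaps both $U_0$ and $V_0$; indeed no open cover of $X$ can have every member contained in $U_0$ or disjoint from $U_0$ unless $\partial U_0=\emptyset$, so no choice of $\mathcal{U}$ rescues the claim. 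Concretely, a $\mathcal{V}$--small class sitting in the region between $U_0$ and $V_0$ can have its star meet both; your recipe then adds that star to both $U$ and $V$, destroying disjointness. Saturation also fails: an added star can poke partially into another class (good or bad) that is not itself added, and the finite deletions you perform inside $U_0$ do not protect against points added afterwards via stars. So, as written, the sets $U,V$ need be neither disjoint nor saturated, and the final step does not go through.

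The repair is to shrink rather than thicken, and to adapt the cover to the class being engulfed --- which is exactly what the paper does. For an open $U$ let $U^*$ be the union of all partition classes contained in $U$; this is saturated and contained in $U$ by definition, and the only content is that $U^*$ is open. Given $x \in C \subseteq U$, star-refine the \emph{two}-element cover $\{U, X\setminus C\}$ by $\mathcal{V}$: any $\mathcal{V}$--small class meeting $\Star(C,\mathcal{V})$ lies in a member of $\mathcal{V}$ that meets a member of $\mathcal{V}$ meeting $C$, hence lies in a member of the cover that meets $C$, which must be $U$; removing from $\Star(C,\mathcal{V})$ the finitely many non-small classes other than $C$ (nullness) gives an open neighborhood of $x$ inside $U^*$. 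With this upper semicontinuity in hand, Hausdorffness follows at once: for disjoint open $U_0\supseteq P$, $V_0\supseteq Q$ given by normality, the sets $U_0^*$ and $V_0^*$ are saturated, open, and automatically disjoint. Note that the worry you raised in your first paragraph about the two-set cover $\{W, X\setminus C_0\}$ is resolved precisely by this star-refinement mechanism; abandoning that cover for the three-set one is where the hole enters.
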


\begin{proof}
Let $\hat{\mathcal{C}}$ be the partition of $X$ consisting of the members of $\mathcal{C}$ and all singleton sets $\{x\}$ such that $x$ is not contained in any member of $\mathcal{C}$.
We first show that for each open set $U$ of $X$, the union $U^*$ of all members of $\hat{\mathcal{C}}$ contained in $U$ is an open set.
Suppose $U$ is open and $x \in U^*$.
Then $x \in C$ for some $C \in \hat{\mathcal{C}}$ with $C \subseteq U$.
By normality, $C$ has an open neighborhood $V$ with $\overline{V}\subseteq U$.
Consider the open cover $\mathcal{V} = \bigl\{X-\overline{V},V,U\setminus\{x\} \bigr\}$, and let $C_1,\dots,C_k$ be the finitely many members of $\mathcal{C} \setminus \{C\}$ that are not $\mathcal{V}$--small.
Then $V \setminus (C_1 \cup \cdots\cup C_k)$ is an open neighborhood of $x$ contained in $U^*$, so that $U^*$ is open.
Since $X$ is normal, it follows immediately that $X/\mathcal{C}=X/\hat{\mathcal{C}}$ is Hausdorff.
\end{proof}

\begin{proposition}
\label{prop:NullHalfspaces}
Let $\mathcal{U}$ be any open cover of $\Delta K$.
For all but finitely many oriented edges $\vec{e}$ of $T$, at least one of $H(\vec{e}\,)$ and $H(-\vec{e}\,)$ is $\mathcal{U}$--small.
\end{proposition}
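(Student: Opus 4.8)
The plan is to reduce the statement to the canonical finite covers $\mathcal{U}_A$ used in the proof of Proposition~\ref{prop: Del K is compact}. Given an open cover $\mathcal{U}$ of $\Delta K$, that argument produces a finite nonempty set $A \subseteq V(K)$ such that $\mathcal{U}_A = \bigset{M'(a,A)}{a \in A}$ refines $\mathcal{U}$. Let $F$ be the smallest subtree of $T_c$ containing the finite vertex set $f(A)$. I claim that for every unoriented edge $e$ of $T_c$ not lying in $F$, one of the two halfspaces $H(\vec{e}\,)$ and $H(-\vec{e}\,)$ is $\mathcal{U}_A$--small, hence contained in a member of $\mathcal{U}$; since $F$ is finite, this leaves only the finitely many oriented edges over $E(F)$ as possible exceptions to the proposition, and we are done.

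To prove the claim, fix an edge $e \notin E(F)$. Since $F$ is connected and misses $e$, it lies in one component of $T_c \setminus e$; orient $e$ so that this is the component $T_c(-\vec{e}\,)$ containing $o(\vec{e}\,)$, and let $w \in V_1(T_c)$ be the vertex incident to $e$. By the discussion of halfspaces above (together with Proposition~\ref{prop:ClassificationOfPoints}), $K$ is the union of two subgraphs $K_1 = \bigcup_{v \in V(T_c(\vec{e}\,))} K_v$ and $K_2 = \bigcup_{v \in V(T_c(-\vec{e}\,))} K_v$; both contain $w$, they meet only in $w$, and $H(\vec{e}\,) = \Delta K_1$ while $H(-\vec{e}\,) = \Delta K_2$. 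Because $f(A) \subseteq F \subseteq T_c(-\vec{e}\,)$, every vertex of $A$ lies in $K_2$, so $A \cap V(K_1) \subseteq \{w\}$. I would then fix $a_0 \in A$ minimizing $d(a_0,w)$ and show $H(\vec{e}\,) = \Delta K_1 \subseteq M'(a_0,A)$, which by the choice of $A$ puts $H(\vec{e}\,)$ inside a member of $\mathcal{U}$.

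For the inclusion $\Delta K_1 \subseteq M'(a_0,A)$ I would use two facts. First, \emph{every} geodesic $[a_0,w]$ in $K$ avoids $A \setminus \{a_0\}$: an interior vertex $a_1 \in A$ of such a geodesic is distinct from $a_0$ and from $w$ (note $w \notin A \setminus \{a_0\}$, since $w \in A$ would force $a_0 = w$), so $d(a_1,w) < d(a_0,w)$, contradicting minimality. Second, for $b \in \Delta K_1 \setminus \{w\}$, any geodesic $\gamma$ from $a_0$ to $b$ --- a ray if $b \in \boundary K$ --- must pass through $w$; since a geodesic does not repeat a vertex, $\gamma$ then splits at $w$ as $[a_0,w] \cup [w,b]$ with $[w,b] \subseteq K_1$. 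Now $[a_0,w]$ avoids $A \setminus \{a_0\}$ by the first fact, while $[w,b] \cap A \subseteq V(K_1) \cap A \subseteq \{w\}$ and $w \notin A \setminus \{a_0\}$ show $[w,b]$ avoids it too; hence $\gamma$ witnesses $b \in M'(a_0,A)$. Since also $w \in M'(a_0,A)$ by the first fact, we conclude $\Delta K_1 \subseteq M'(a_0,A)$.

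The step I expect to require the most care is the claim that a geodesic from $a_0 \in V(K_2) \setminus \{w\}$ to a point $b \in \Delta K_1 \setminus \{w\}$ passes through $w$, especially when $b$ is an ideal point. Here I would invoke that $\Delta K_2$ is closed in $\Delta K$ with $\Delta K_1 \cap \Delta K_2 = \{w\}$ (from the halfspace results above): then $\Delta K \setminus \Delta K_2$ is an open neighborhood of $b$ contained in $\Delta K_1$, so a geodesic ray converging to $b$ is eventually contained in $V(K_1) \setminus \{w\}$, while it starts at $a_0 \in V(K_2) \setminus \{w\}$; as no edge of $K$ joins $V(K_1) \setminus \{w\}$ to $V(K_2) \setminus \{w\}$, the ray must cross $w$, and the same reasoning shows that after crossing $w$ it stays in $K_1$, giving the splitting used above. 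The remaining points --- that the $K_v$ cover $K$ and that $K_1 \cap K_2 = \{w\}$ --- are routine consequences of the compatible-graph construction and the tree structure of $T_c$.
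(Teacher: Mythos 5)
Your proof is correct, and it is a close cousin of the paper's argument rather than a literally identical one. Both proofs convert the cover $\mathcal{U}$ into finite data, push that data into a finite subtree of $T_c$, and observe that for every edge outside that subtree the halfspace on the far side is $\mathcal{U}$--small. The difference is the smallness certificate. The paper works with the uniformity of Definition~\ref{def: uniformity on Delta K}: by compactness it suffices to bound diameters with respect to a single entourage $D_J$ for a finite edge set $J$, and one takes the convex hull $\bar{S}$ of the vertices of $V_0(T_c)$ whose pieces contain an edge of $J$; that far halfspaces have small $D_J$--diameter is then essentially a convexity statement left implicit. You instead use what the proof of Proposition~\ref{prop: Del K is compact} actually establishes --- that some canonical cover $\mathcal{U}_A$ refines $\mathcal{U}$ (a legitimate extraction, parallel to how Definition~\ref{def: uniformity on Delta K} extracts the entourage base from the same proof) --- take the hull $F$ of $f(A)$, and prove outright that each far halfspace lies in a single basic set $M'(a_0,A)$ by choosing $a_0 \in A$ closest to the separating vertex $w$ and splitting every geodesic at $w$. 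What each buys: the paper's route is shorter because it leans on the uniform structure already in place; yours is more self-contained at the level of the basic neighborhoods $M'(a,A)$ and makes explicit the cut-vertex geodesic argument the paper suppresses. The steps you defer are indeed routine here: the decomposition $K=K_1\cup K_1$ with $K_1\cap K_2=\{w\}$ follows from the compatible-graph construction and is exactly what the unnumbered halfspace proposition invokes; existence of geodesics from a vertex to any point of $\Delta K$ is standing in this framework; and convergence of a geodesic ray to its ideal endpoint follows from the neighborhood base of Definition~\ref{def: topology on Delta K}, since the ray meets a finite vertex set in only finitely many vertices.
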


\begin{proof}
Let $J$ be a finite set of edges of $K$, and let $D_J \subset \Delta K \times \Delta K$ be the associated entourage (see Definition~\ref{def: uniformity on Delta K}).
A set $Y \subset \Delta K$ has \emph{diameter less than $D_J$} if $(x,y) \in D_J$ for all $x,y \in Y$.
By compactness, it suffices to show the following holds for any $J$.
For all but finitely many edges $\vec{e}$ of $T$, one of the halfspaces $H(\vec{e}\,)$ and $H(-\vec{e}\,)$ has diameter less than $D_J$.

Let $S$ be the finite set of vertices $s\in V_0(T)$ such that $K_s$ contains an edge of $J$.
If an edge ${e}$ of $T$ is not contained in the convex hull $\bar{S}$ of $S$ then $e$ may be oriented so that the halftree $T(\vec{e}\,)$ is disjoint from $\bar{S}$.  For each such edge, the halfspace $H(\vec{e}\,)$ has diameter less than $D_J$, completing the proof since $\bar{S}$ has only finitely many edges.
\end{proof}

\begin{proposition}
\label{prop:TreeOfSpaces}
The Bowditch boundary $\Delta K$ is homeomorphic to the limit $\lim\Theta$ of the tree system associated to $T$.
\end{proposition}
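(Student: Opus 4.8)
The plan is to produce a continuous bijection $\lim\Theta(T_c) \to \Delta K$ and then invoke compactness to upgrade it to a homeomorphism. The raw material is already assembled: Proposition~\ref{prop:ClassificationOfPoints} gives a set-theoretic bijection $\rho\colon \#\Theta(T_c)\cup\Ends(T_c)\to\Delta K$, and the classical inverse limit description identifies $\lim\Theta(T_c)$ as a subset of $\prod_F M_F$. First I would identify the underlying set of $\lim\Theta(T_c)$ with $\#\Theta(T_c)\cup\Ends(T_c)$. Each compatible thread in the inverse system $(M_F, f_{FF'})$ either eventually stabilizes on the copy of some $M_v$ sitting inside every $M_F$ with $v\in F$ (these are the points of $\#\Theta(T_c)$), or else the ``active'' vertex escapes to infinity along a ray of $T_c$, determining an end (these are the points of $\Ends(T_c)$); conversely each such datum determines a unique thread. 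This is the standard identification of the limit of a tree system with $\#\Theta\cup\Ends(T)$, and it is essentially bookkeeping once one unwinds Definition~\ref{def: limitOftreeSystem}.

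Next I would equip this set with the topology it inherits as $\lim\Theta(T_c)$ and check that $\rho$ is continuous into $\Delta K$. On the part $\#\Theta(T_c)$, continuity of $\rho$ is already asserted in Proposition~\ref{prop:ClassificationOfPoints}(2). The content is therefore continuity at the ends: given $y\in\Ends(T_c)$ and a basic neighborhood $M'_1(\rho(y),S)$ of $\rho(y)=\boundary f^{-1}(y)$ in $\Delta K$, I must exhibit a neighborhood of $y$ in $\lim\Theta(T_c)$ mapping inside it. Here is where Proposition~\ref{prop:NullHalfspaces} does the work: the halfspaces $H(\vec e\,)$ form a null family (with respect to the entourage $D_J$ determined by the edges $J$ meeting $S$), so all but finitely many oriented edges $\vec e$ have $H(\vec e\,)$ of diameter less than $D_J$; choosing an edge $\vec e$ far out along the ray representing $y$, the neighborhood of $y$ consisting of threads whose active vertex lies in $T_c(\vec e\,)$ maps into $H(\vec e\,)$, which is $D_J$-small and hence contained in the basic neighborhood once one checks $\rho(y)\in H(\vec e\,)$ and $H(\vec e\,)\subseteq M'_1(\rho(y),S)$. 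A parallel but easier argument, using that the halfspaces are closed (previous Proposition) and the basic neighborhoods $M'(y,\{x\})$ are small, handles openness of $\rho$, or one simply cites compactness.

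Finally, since $\lim\Theta(T_c)$ is compact and $\Delta K$ is Hausdorff, the continuous bijection $\rho$ is automatically a homeomorphism. I expect the main obstacle to be the continuity of $\rho$ at the points of $\Ends(T_c)$: one must match the two rather different-looking neighborhood bases --- the ``escape to infinity in $T_c$'' basis on the limit side and the $M'_1(\cdot,S)$ basis on the boundary side --- and the bridge between them is precisely the nullity of the halfspace family established in Proposition~\ref{prop:NullHalfspaces} together with the uniform structure $\mathcal D$ of Definition~\ref{def: uniformity on Delta K}. Everything else is formal consequences of the inverse limit construction and of facts already proved in this section.
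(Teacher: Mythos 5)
Your overall shape (continuous bijection from a compact space to a Hausdorff space, then upgrade by compactness) is reasonable, but the continuity verification has a genuine gap, and it occurs at exactly the point where the real work of the proposition lives. First, your proposed neighborhood of an end $y$ --- ``threads whose active vertex lies in $T_c(\vec e\,)$'' --- is not a neighborhood in the inverse limit topology: it is $\pi_F^{-1}(x_F)$, where $x_F\in M_F$ is the image of the collapsed halfspace, and since $M_F$ is a nontrivial connected compactum the point $x_F$ is not isolated, so this preimage is closed but not open in $\lim\Theta(T_c)$. A genuine basic neighborhood of $y$ has the form $\pi_F^{-1}(U)$ with $U$ open in $M_F$, and $\rho\bigl(\pi_F^{-1}(U)\bigr)=\mu_F^{-1}(U)$; to show this can be forced inside a basic set $M'_1(\rho(y),S)$ you need to know that $\mu_F^{-1}(x_F)=H(\vec e\,)$ is small \emph{and} that $\mu_F$ is continuous (equivalently closed), so that a small open $U\ni x_F$ has small preimage. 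That continuity of $\mu_F$ is precisely what Propositions~\ref{prop:NullUSC} and~\ref{prop:NullHalfspaces} are combined to prove in the paper's argument; it is not available to you for free, and without it the step ``$H(\vec e\,)$ is $D_J$--small, hence the neighborhood maps into $M'_1(\rho(y),S)$'' does not close.

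Second, continuity at points of $\#\Theta(T_c)$ is not ``already asserted'' by Proposition~\ref{prop:ClassificationOfPoints}(2). That statement concerns the quotient (weak) topology on $\#\Theta(T_c)$, which is \emph{finer} than the subspace topology $\#\Theta(T_c)$ inherits from $\lim\Theta(T_c)$; continuity with respect to a finer topology on the source does not imply continuity with respect to a coarser one, and at a point of some $M_v$ you must again control $\rho\bigl(\pi_F^{-1}(U)\bigr)$, i.e.\ the union of $\rho(U)$ with all halfspaces attached at points of $U$ --- the same nullity argument as at the ends. Once you repair both points you will essentially have rebuilt the paper's retractions $\mu_F\colon \Delta K\to M_F$, at which stage it is simpler to run the argument in the paper's direction: the $\mu_F$ induce a continuous $\mu\colon\Delta K\to\lim\Theta(T_c)$ by the universal property, $\mu$ is closed and surjective as a map of compacta to a Hausdorff space, and injectivity is checked through the bijection $\rho$ of Proposition~\ref{prop:ClassificationOfPoints} --- this also lets you avoid your first step, the set-theoretic identification of $\lim\Theta(T_c)$ with $\#\Theta(T_c)\cup\Ends(T_c)$, which is plausible but is itself a nontrivial piece of bookkeeping that the paper never needs.
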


\begin{proof}
For each finite subtree $F$ of $T$, we define a map $\mu_F \colon \Delta K \to M_F$, where $M_F$ is the corresponding partial union.
Let $E_F$ be the set of all oriented edges with $t(\vec{e}\,)$ in $T \setminus F$ and with $o(\vec{e}\,)$ in $F$.
We first show that the family of halfspaces $H(\vec{e}\,)$ with $\vec{e} \in E_F$ is null.
Let $\mathcal{U}$ be an arbitrary open cover of $\Delta K$.
Form a new cover $\mathcal{U}'$ as follows.
Let $a\ne b$ be two points of the partial union $M_F$.
Define $\mathcal{U}'$ to contain all open sets of $\mathcal{U}$, except that each set $U \in \mathcal{U}$ with $M_F \subset U$ is replaced by the pair of sets $U\setminus\{a\}$ and $U \setminus\{b\}$.
Then the cover $\mathcal{U}'$ refines $\mathcal{U}$, and $M_F$ is not $\mathcal{U}'$--small.
Nullity now follows by applying Proposition~\ref{prop:NullHalfspaces} to the family of edges $E_F$.

Let $\Delta K/{\sim}$ be the quotient in which each halfspace $H(\vec{e}\,)$ with $\vec{e} \in E_F$ is collapsed to a point.
The inclusion $M_F \to \Delta K$ induces a continuous bijection $M_F \to \Delta K/{\sim}$.  
According to Proposition~\ref{prop:NullUSC}, the quotient $\Delta K /{\sim}$ is Hausdorff, so this bijection is a homeomorphism.
We define $\mu_F$ to be the continuous retraction $\Delta K \to \Delta K/{\sim} \to M_F$.
Since the maps $\mu_F$ commute with the maps of the inverse system, they induce a continuous closed map $\mu\colon \Delta K \to \lim\Theta$.
As each $\mu_F$ is a surjective map from a compact space to a Hausdorff space, the induced map $\mu$ is surjective \cite[Thm.~3.2.14]{Engelking_Topology}.

To see that $\mu$ is injective, it suffices to show that
\[
   \mu \of \rho\colon {\textstyle \bigl(\bigcup \Theta\bigr)} \cup \Ends(T) \to \lim\Theta
\]
is injective, where $\rho$ is the bijection from Proposition~\ref{prop:ClassificationOfPoints}.
If $x\ne y$ lie in $\bigcup\Theta$ then there exists a finite subtree $F$ of $T$ such that $x,y \in M_F$.
Since $\mu_F$ is a retraction, we have $\mu_f\of\rho(x) =x \ne y = \mu_f\of\rho(y)$.
Thus $\mu\of\rho(x)\ne \mu\of\rho(y)$.

If $x\ne y$ lie in $\Ends(T)$, let $v \in V_0(T)$ be a vertex on the geodesic from $x$ to $y$.
Since the images of distinct edge spaces are distinct in $\Delta K_v$, 
it follows that $\mu_v\of\rho$ maps $x$ and $y$ to distinct points of $M_v=\Delta K_v$.
The case when $x \in \bigcup\Theta$ and $y\in \Ends(T)$ is similar.
In either case, we see that $\mu\of\rho(x)\ne \mu\of\rho(y)$.
Thus $\mu\of\rho$ is injective.
\end{proof}

\section{Connectedness properties of boundaries}
\label{sec:Connectedness}

This section discusses criteria for the connectedness and local connectedness of the Bowditch boundary.

Recall that a group pair $(\Gamma,\P)$ is relatively one ended if $\Gamma$ does not admit an action relative to $\P$ on a simplicial tree with finite edge stabilizers.
Relative one-endedness is weaker than absolute one-endedness.  For instance, let $\Gamma$ be the fundamental group of a genus one surface with one cusp and let $P$ be the fundamental group of the cusp.
Then $\Gamma$ is a free group of rank two, which is not one ended.  But $\Gamma$ is one ended relative to $P$, having a Bowditch boundary homeomorphic to the circle $S^1$.

\begin{proposition}
\label{prop: stallings}
Let $(\Gamma,\P)$ be relatively hyperbolic.
The boundary $M=\boundary(\Gamma,\P)$ is connected if and only if $\Gamma$ is one ended relative to $\P$.
\end{proposition}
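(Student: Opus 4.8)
The plan is to prove both implications by reducing to the structure theory developed in Sections~\ref{sec:Splittings} and~\ref{sec:TreesOfCompacta}. For the forward direction I would argue the contrapositive: suppose $\Gamma$ is \emph{not} one ended relative to $\P$, meaning either $\Gamma$ is finite, or $\Gamma \in \P$, or $\Gamma$ splits over a finite subgroup relative to $\P$. In the first case $M$ is empty; in the second case $M$ is a single point (since we consider only infinite peripheral subgroups, and the Bowditch boundary of a parabolic pair is one point), so $M$ has an isolated point and is disconnected under our running convention that peripheral subgroups are infinite---more to the point, a one-point space fails to be ``connected'' only in the trivial sense, so the real content is the splitting case. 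If $\Gamma$ splits over a finite subgroup $F$ relative to $\P$, take a $(\Gamma,\P)$--graph $K$ compatible with the Bass--Serre tree $T$ of this splitting (via Definition~\ref{defn:Compatible}); actually, since $F$ is finite rather than parabolic, I would instead work directly with the action on $T$ and a $(\Gamma,\P)$--graph built from it. Then Proposition~\ref{prop:TreeOfSpaces} (or its evident analogue for finite edge groups) exhibits $M$ as a limit of a tree system in which the edge spaces $\Sigma_{\vec e}$ are \emph{empty} (a finite subgroup fixes no point of the boundary of a vertex group in general position), so the halfspaces $H(\vec e\,)$ and $H(-\vec e\,)$ are disjoint nonempty closed sets covering $M$; hence $M$ is disconnected.

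For the reverse direction, assume $\Gamma$ is one ended relative to $\P$ and show $M$ is connected. Here I would invoke Proposition~\ref{prop:JSJfinitelygenerated}: there is a canonical JSJ tree of cylinders $T_c$ for splittings over parabolic subgroups, all of whose edge stabilizers are \emph{infinite} parabolic (this is exactly where relative one-endedness is used), and for each $v \in V_0(T_c)$ the vertex pair $(\Gamma_v,\Q_v)$ is relatively hyperbolic, finitely generated, and splits over no finite or parabolic subgroup relative to $\Q_v$. By Proposition~\ref{prop:TreeOfSpaces}, $M = \boundary(\Gamma,\P)$ is the limit of the tree system $\Theta(T_c)$, whose edge spaces are now nonempty one-point spaces. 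An inverse limit of connected compacta with surjective bonding maps is connected, and a partial union $M_F = \#\Theta_F$ over a finite subtree is connected provided each component space $\Delta K_v$ is connected and the gluings are along (nonempty) points---so the whole argument reduces to the base case: if $(\Gamma_v,\Q_v)$ is relatively hyperbolic, finitely generated, and does not split over a finite subgroup relative to $\Q_v$, then $\boundary(\Gamma_v,\Q_v) = \Delta K_v$ is connected.

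The main obstacle, then, is this base case of a finitely generated relatively hyperbolic pair with no finite splitting, and here I would use the cusped space. By the ``On finite generation'' remark, $\Delta K_v$ is the Gromov boundary of a locally finite $\delta$--hyperbolic cusped space $X$ on which $\Gamma_v$ acts geometrically in the relative sense. If $\boundary X$ were disconnected, a standard argument (a separating pair of disjoint closed sets in the boundary of a hyperbolic space pulls back to a coarse separation of $X$ by a bounded set, which by a Stallings-type/Dunwoody accessibility argument in the relative setting yields a splitting over a subgroup that stabilizes a bounded piece, hence over a finite or parabolic-but-coarsely-bounded subgroup, which in the cusped-space picture is finite) contradicts the hypothesis. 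I would cite the relative version of Stallings' theorem (available through the cusped space; see \cite{GrovesManning08}) rather than reprove it: a finitely generated relatively hyperbolic group with disconnected Bowditch boundary splits over a finite subgroup relative to $\P$. Assembling: connectedness of each $\Delta K_v$ feeds into connectedness of every partial union $M_F$, which feeds into connectedness of $\lim\Theta(T_c) \cong M$, completing the reverse implication.

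I expect the delicate point to be making the relative Stallings step rigorous without circularity---one must ensure the splitting produced is genuinely \emph{relative to $\P$} (every $P\in\P$ fixing a vertex), which is automatic from the construction of the cusped space since the horoballs are connected and a bounded separating set cannot cross a horoball deeply; alternatively one can quote that the number of ends of the pair $(\Gamma_v,\Q_v)$ is defined via the relative cusped space and that ``more than one end of the pair'' is equivalent to both ``disconnected boundary'' and ``splits over a finite subgroup relative to $\Q_v$,'' which is precisely the relatively hyperbolic analogue of Stallings' theorem. I would present the proof in exactly this two-implication format, keeping the tree-system reduction explicit and pushing all the hyperbolic-geometry content into a single appeal to the relative Stallings theorem.
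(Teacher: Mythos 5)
Your reverse implication is essentially the paper's own argument: pass to the JSJ tree of cylinders of Proposition~\ref{prop:JSJfinitelygenerated}, write $M$ as the limit of the tree system $\Theta(T_c)$ via Proposition~\ref{prop:TreeOfSpaces}, and reduce to the statement that a finitely generated pair $(\Gamma_v,\Q_v)$ with no finite splitting relative to $\Q_v$ has connected boundary. The paper discharges that base case by citing Bowditch's result in the finitely generated setting; your cusped-space/relative-Stallings citation is the same known input packaged differently, and your sketch of its proof is not needed as long as you genuinely cite rather than reprove it.

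The gap is in the forward implication. You propose to apply ``Proposition~\ref{prop:TreeOfSpaces} (or its evident analogue for finite edge groups)'' with empty edge spaces, but no such analogue exists in the paper and it is not evident: Proposition~\ref{prop:TreeOfSpaces}, the compatible-graph construction of Definition~\ref{defn:Compatible}, and the halfspace/nullity arguments all use in an essential way that edge stabilizers are \emph{infinite parabolic} (and finitely generated) --- the vertex graphs are glued along the unique vertex fixed by the parabolic edge group, and the halfspaces meet exactly in that point. For a splitting over a finite group there is no fixed vertex to glue along, so asserting that the ``halfspaces'' are disjoint nonempty closed sets covering $M$ is precisely the statement to be proved, not a consequence of the machinery; your parenthetical reason (that a finite subgroup fixes no boundary point ``in general position'') is not the relevant mechanism. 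The paper instead argues directly: take any $(\Gamma,\P)$--graph $K$, map $V(K)$ equivariantly to the Bass--Serre tree $T$ of the finite splitting, observe that the set $J$ of edges of $K$ joining vertices mapping to the two halftrees determined by an edge of $T$ is finite (finitely many $\Gamma$--orbits of edges of $K$ and finite edge stabilizers of $T$), and then the basic open sets $P'(a_1,J)$ and $P'(a_2,J)$ are disjoint and partition $\Delta K=M$, so $M$ is disconnected. Some such direct argument is what your forward direction needs; as written, it rests on an unproved extension of the tree-system proposition. (The degenerate cases $\Gamma$ finite or $\Gamma\in\P$ are handled no more carefully in the paper than in your sketch, so I would not count them against you, but note that a one-point boundary is connected, so the real content there is supplied by the standing conventions on the pair.)
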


This proposition was established in the finitely generated case by Bowditch \cite[Thm.~10.1]{Bowditch12_RelHyp}.
The proof of the forward implication is similar to that given by Bowditch.  Bowditch's argument for the reverse implication is valid only when $\Gamma$ is finitely generated.  The general case is easily reduced to the finitely generated case using Propositions \ref{prop:JSJfinitelygenerated} and \ref{prop:TreeOfSpaces}.

\begin{proof}
We first assume that $\Gamma$ admits a nontrivial action relative to $\P$ on a simplicial tree $T$ with finite edge stabilizers.  If $K$ is any fine hyperbolic graph with a relatively hyperbolic action  of $(\Gamma,\P)$, we may map the vertex set $V(K)$ equivariantly to the vertex set $V(T)$ of $T$.  Each edge $e$ of $T$, separates $T$ into two halftrees $T_1$ and $T_2$. Let $A_i$ be the set of vertices of $K$ mapping to the halftree $T_i$.
Then the set $J$ of edges of $K$ connecting a vertex of $A_1$ to a vertex of $A_2$ is a finite set.
If $a_1$ and $a_2$ are vertices of $A_1$ and $A_2$ respectively, then the basic open sets $P'(a_1,J)$ and $P'(a_2,J)$ are disjoint and partition $\Delta K = M$.
Thus $M$ is disconnected.

For the converse, assume that $\Gamma$ is one ended relative to $\P$.  Let $T_{JSJ}$ be the JSJ tree of cylinders over parabolic subgroups given by Proposition~\ref{prop:JSJfinitelygenerated}.
Then each vertex group $\Gamma_v$ with $v \in V_0(T_{JSJ})$ is finitely generated, each edge group is finitely generated, and the pair $(\Gamma_v,\Q_v)$ is relatively hyperbolic, where $\Q_v$ is the peripheral structure given by Proposition~\ref{prop:RelQCVertex}.
Furthermore, $\Gamma_v$ is one ended relative to $\Q_v$.

As $\Gamma_v$ is finitely generated, one-endedness of the relatively hyperbolic pair $(\Gamma_v,\Q_v)$ implies that its Bowditch boundary $\boundary(\Gamma_v,\Q_v)$ is connected by \cite[Prop.~10.1]{Bowditch12_RelHyp}.
According to Proposition~\ref{prop:TreeOfSpaces}, the boundary $\boundary(\Gamma,\P)$ is the inverse limit of a tree system whose vertex spaces are all connected.
In particular, the factor spaces $M_F$ of the inverse system are connected.
We conclude that the inverse limit is connected, since an inverse limit of connected compact Hausdorff spaces is always connected \cite[Thm.~6.1.20]{Engelking_Topology}. 
\end{proof}

The following elementary result of Capel provides a criterion for the local connectedness of an inverse limit of compacta.

\begin{theorem}[\cite{Capel54}]
\label{thm: monotoneBondingCapel}
Let $\{X_\alpha\}$ be an inverse system such that each bonding map $X_\alpha\to X_{\alpha'}$ is monotone. If each factor space $X_\alpha$ is compact and locally connected then the inverse limit $\varprojlim X_\alpha$ is locally connected.
\end{theorem}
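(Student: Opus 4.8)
The plan is to reduce the statement to a single elementary fact about monotone maps of compacta and then verify local connectedness against a basis for the inverse limit topology. Recall that a continuous map of compact Hausdorff spaces is \emph{monotone} if it is surjective and has connected point-inverses. The key lemma I would isolate is: \emph{if $g\colon X\to Y$ is a monotone map of compact Hausdorff spaces and $C\subseteq Y$ is a subcontinuum, then $g^{-1}(C)$ is connected.} To prove this, suppose $g^{-1}(C)=A\sqcup B$ with $A,B$ nonempty, closed, and disjoint. Each fiber $g^{-1}(c)$ with $c\in C$ is connected and contained in $g^{-1}(C)$, hence lies entirely in $A$ or entirely in $B$. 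Therefore $C=g(A)\sqcup g(B)$ is a partition of $C$ into two nonempty subsets, each closed in $Y$ (being the image of a compact set), contradicting connectedness of $C$.

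Next I would promote monotonicity from the bonding maps $f_{\alpha\alpha'}$ to the canonical projections $\pi_\alpha\colon X=\varprojlim X_\beta\to X_\alpha$. Surjectivity of $\pi_\alpha$ follows from surjectivity of the bonding maps together with compactness (the finite intersection property). For a point $y\in X_\alpha$, the fiber $\pi_\alpha^{-1}(y)$ is canonically the inverse limit of the subsystem of fibers $f_{\alpha\beta}^{-1}(y)$ for $\beta\ge\alpha$, with bonding maps the restrictions of the maps $f_{\beta\gamma}$; each such fiber is a nonempty compact connected Hausdorff space by monotonicity of $f_{\alpha\beta}$, and the restricted bonding maps remain surjective, since the identity $f_{\alpha\gamma}=f_{\alpha\beta}\circ f_{\beta\gamma}$ forces any $f_{\beta\gamma}$-preimage of a point of $f_{\alpha\beta}^{-1}(y)$ to lie in $f_{\alpha\gamma}^{-1}(y)$. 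By the standard fact that an inverse limit of nonempty compact connected Hausdorff spaces with surjective bonding maps is connected \cite[Thm.~6.1.20]{Engelking_Topology}, the fiber $\pi_\alpha^{-1}(y)$ is connected. Thus each $\pi_\alpha$ is a monotone map of compacta.

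Finally I would verify local connectedness of $X$ directly. The sets $\pi_\alpha^{-1}(W)$, where $W$ ranges over open subsets of the various $X_\alpha$, form a basis for the topology of $X$, so it suffices to show that for each $x\in X$ and each such basic neighborhood $\pi_\alpha^{-1}(W)$ of $x$ there is a connected neighborhood of $x$ contained in it. Since $X_\alpha$ is compact Hausdorff, hence normal, choose an open set $W'$ with $\pi_\alpha(x)\in W'\subseteq\overline{W'}\subseteq W$; by local connectedness of $X_\alpha$ the component $V$ of $\pi_\alpha(x)$ in $W'$ is open, so $\overline{V}$ is a subcontinuum of $X_\alpha$ with $\pi_\alpha(x)\in V\subseteq\overline{V}\subseteq W$. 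Then $\pi_\alpha^{-1}(\overline{V})$ is connected by the lemma of the first paragraph, it contains the open set $\pi_\alpha^{-1}(V)\ni x$, and it is contained in $\pi_\alpha^{-1}(W)$. Hence it is a connected neighborhood of $x$ lying inside the given one, and local connectedness of $X$ follows.

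I expect the main obstacle to be the bookkeeping of the second paragraph: correctly identifying $\pi_\alpha^{-1}(y)$ with the inverse limit of the fiber subsystem and checking that the restricted bonding maps stay surjective, so that the cited connectedness theorem applies and $\pi_\alpha$ is genuinely monotone. Once this is in place, the first paragraph is a two-line contradiction argument and the third is a routine manipulation with the basis and normality of $X_\alpha$.
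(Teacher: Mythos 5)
Your argument is essentially correct, and it is worth noting that the paper gives no proof of this statement at all: the theorem is quoted from Capel's 1954 paper and used as a black box, so your write-up supplies a self-contained argument rather than an alternative to an in-paper one. The route you take is the standard proof of Capel's theorem: promote monotonicity from the bonding maps to the canonical projections $\pi_\alpha$, observe that monotone preimages of subcontinua are connected, and test local connectedness on the sets $\pi_\alpha^{-1}(W)$, which do form a basis because the index set of an inverse system is directed (in the paper's application it is the poset of finite subtrees of $T_c$, which is directed). The individual steps check out: the two-line lemma on preimages of continua, the identification of $\pi_\alpha^{-1}(y)$ with the inverse limit of the fiber subsystem over the cofinal directed set $\{\beta : \beta \ge \alpha\}$ with surjective restricted bonding maps, and the appeal to Engelking's Theorem 6.1.20 for connectedness of that limit. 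The one step you pass over silently is the very last: the neighborhoods you produce, $\pi_\alpha^{-1}(\overline{V})$, are closed connected neighborhoods containing $x$ in their interior, not open connected sets, so what you have literally proved is that $\varprojlim X_\alpha$ is connected im kleinen at every point. You should close by invoking (or proving in a line) the standard fact that a space connected im kleinen at each of its points is locally connected --- given an open $U$ and a component $C$ of $U$, each $x \in C$ has a connected neighborhood inside $U$, which must lie in $C$, so $C$ is open. With that sentence added, the proof is complete and is essentially the argument Capel's citation stands in for.
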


Recall that a map $f \colon Y \to Z $ is \emph{monotone} if $f$ is surjective and for each $z \in Z$ the set $f^{-1}(z)$ is compact and connected.

\begin{theorem}
\label{thm: localconnectednessInfiniteCase}
Let $(\Gamma, \P)$
be a relatively hyperbolic group pair, and suppose the boundary $M=\boundary(\Gamma,\P)$ is connected.
Let $T_c$ be the tree of cylinders for a splitting of $\Gamma$ relative to $\P$ over parabolic subgroups.
Suppose for each $v \in V_0(T_c)$ that the Bowditch boundary $\boundary(\Gamma_v,\Q_v)$ is locally connected.
Then $M$ is also locally connected.
\end{theorem}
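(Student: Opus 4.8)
The plan is to realize $M$ as an inverse limit of well-behaved compacta and apply Capel's criterion. By Proposition~\ref{prop:TreeOfSpaces}, $M$ is homeomorphic to $\lim\Theta(T_c)$, the inverse limit whose factor spaces are the partial unions $M_F$ indexed by the finite subtrees $F$ of $T_c$ and whose bonding maps are the collapse maps $f_{F_1F_2}\colon M_{F_2}\to M_{F_1}$ for $F_1\subseteq F_2$. By Theorem~\ref{thm: monotoneBondingCapel} it therefore suffices to show that every factor space $M_F$ is compact and locally connected and that every bonding map $f_{F_1F_2}$ is monotone. I first record a fact used throughout: every component space $M_v=\boundary(\Gamma_v,\Q_v)$ with $v\in V_0(T_c)$ is connected. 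Indeed, Proposition~\ref{prop:TreeOfSpaces} provides a continuous surjection from $M$ onto the factor $M_{\{v\}}=M_v$ associated with the one-vertex subtree $\{v\}$ (equivalently, by Proposition~\ref{prop:ClassificationOfPoints}, $M_v$ is a retract of $M$), and $M$ is connected by hypothesis. Together with the hypothesis on $\boundary(\Gamma_v,\Q_v)$, each $M_v$ is a connected, locally connected compactum, while each peripheral point $M_w$ with $w\in V_1(T_c)$ is trivially such.

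Compactness of $M_F$ is immediate, since $M_F$ is a quotient of a finite disjoint union of compacta. For local connectedness I would observe that $M_F$ is obtained by gluing the finitely many component spaces $M_v$ with $v\in V_0(F)$, together with the finitely many peripheral points, along single points in the pattern of the finite tree $F$; that is, $M_F$ is a finite ``tree wedge'' of locally connected compacta. A short lemma then shows such a wedge is locally connected. Local connectedness is a local property, so at a point $x$ lying in the interior of a single piece one uses a connected neighborhood of $x$ inside that piece (each piece is closed in $M_F$, and the set of points of a piece lying on no other piece is open in $M_F$); at a point $p$ where several, hence finitely many, pieces meet, the union of small connected open neighborhoods of $p$ chosen in each piece through $p$ and avoiding all other gluing points is an open connected neighborhood of $p$.

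It remains to check that each bonding map $f_{F_1F_2}$ is monotone. Since monotone maps of compact Hausdorff spaces are closed under composition, and since $F_2$ can be built from $F_1$ by successively adjoining a single new vertex together with the unique edge joining it to what has been built so far, it suffices to treat the case $F_2=F_1\cup\{u\}$ where $u$ is one new vertex joined to $F_1$ by one edge. If $u\in V_1(T_c)$, the point $M_u$ is identified with a point already present in $M_{F_1}$, so $M_{F_2}=M_{F_1}$ and $f_{F_1F_2}$ is the identity. If $u=v\in V_0(T_c)$, then $M_{F_2}$ is $M_{F_1}$ with $M_v$ wedged on at a single point $p$ and $f_{F_1F_2}$ collapses $M_v$ to $p$; hence $f_{F_1F_2}$ is surjective and every point-preimage is a singleton except $f_{F_1F_2}^{-1}(p)=M_v$, which is compact (being closed in a compactum) and connected by the preliminary fact. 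Thus $f_{F_1F_2}$ is monotone, and Theorem~\ref{thm: monotoneBondingCapel} yields that $M\cong\lim\Theta(T_c)$ is locally connected.

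The one genuinely delicate point is the monotonicity of the bonding maps: this is precisely where the tree structure of the splitting and the connectedness of the vertex boundaries $\boundary(\Gamma_v,\Q_v)$ are used in an essential way. The remaining ingredients --- the identification of $M$ with the tree-system limit, the routine ``tree wedge'' lemma for local connectedness of the factor spaces, and Capel's theorem --- are applied directly.
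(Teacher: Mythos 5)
Your proposal is correct and follows essentially the same route as the paper: identify $M$ with $\lim\Theta(T_c)$ via Proposition~\ref{prop:TreeOfSpaces}, check that the partial unions are compact, connected, and locally connected and that the collapse maps are monotone, and then apply Capel's theorem. Your extra details---deducing connectedness of each $M_v$ from the retraction of $M$ onto it, the hands-on tree-wedge argument for local connectedness (where the paper simply cites that quotients of compact locally connected spaces are locally connected), and the reduction of monotonicity to single-vertex extensions---are sound refinements of the same argument rather than a different approach.
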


\begin{proof}
By Proposition~\ref{prop:TreeOfSpaces}, the boundary $M$ is an inverse limit of a tree system $\Theta$ corresponding to the splitting $T_c$. 
For each finite subtree $F$ of $T_c$, the partial union $K_F$ is formed by gluing finitely many component spaces $K_v$ in the pattern of $F$.
Since each $K_v$ is connected, $K_F$ is again connected.
As a quotient of a compact, locally connected space, $K_F$ is also compact and locally connected.

To apply Theorem~\ref{thm: monotoneBondingCapel}, we only need to show that the bonding map $K_{F_2} \to K_{F_1}$ is monotone whenever $F_1$ is a subtree of $F_2$.
But, according to Definition~\ref{def: limitOftreeSystem}, each nontrivial point preimage has the form $K_F$ for some finite subtree $F$, and all such sets are compact and connected.
\end{proof}

\section{Pretrees and non-nesting actions}
\label{sec:Pretrees}

This section reviews the definition and basic properties of pretrees, a general treelike structure explored extensively by Bowditch in \cite{Bowditch99_Treelike}.
A key result of this section is Lemma~\ref{lem: non-nesting in quotient}, which states that a non-nesting action on a pretree induces a non-nesting action on its quotient by any equivariant full relation.
The results of this section lay the foundations for applications to the cut point structure of boundaries in Sections \ref{sec:CutPointPretree} and~\ref{sec:  minimalcodense}.

\begin{defn}
Let $K$ be a set with a ternary relation called \emph{betweenness}.
If $y$ is between $x$ and $z$, we write $xyz$.
Such a structure is a \emph{pretree} if it satisfies the following:
\begin{enumerate}
\item If $y$ is between $x$ and $z$, then $x\ne z$.
\item $y$ is between $x$ and $z$ if and only if $y$ is between $z$ and $x$.
\item If $y$ is between $x$ and $z$, then $z$ is not between $x$ and $y$.
\item If $xyz$ and $y \neq w $  then either $xyw$ or $wyz$. 
\end{enumerate}
\end{defn}

For points $x,y$ in a pretree $K$, the \emph{open interval} $(x,y)$ is the set of all $z$ between $x$ and $y$. The \emph{closed interval} $[x,y]$ is the set $(x,y) \cup \{x,y\}$.  Half-open intervals are defined similarly.
Distinct points $x\ne y$ are \emph{adjacent} if the interval $(x,y)$ is empty.
A pretree is \emph{discrete} if every interval $(x,y)$ is finite.
A subset $Q$ of a pretree $K$ is \emph{dense} in $K$ if for all distinct $x, y \in K$ the interval $(x,y)$ contains at least one point of $Q$.
We say $K$ is \emph{dense} if it is dense in itself.

A subset $A$ of a pretree $K$ is \emph{full} if $ [x, y] \subseteq A $ for all $x, y \in A$. An $arc$ of a pretree is a nonempty full subset $A$ such that for all distinct $x, y, z \in A$, we have $xyz$ or $yzx$ or $zxy$.
Note that every interval is an arc.
A \emph{direction} on an arc $A$ is a linear order $<$ on $A$ such that $xyz$ holds if and only if either $x<y<z$ or $z<y<x$. We refer to $(A,<)$ as a \emph{directed} arc.
Any arc with at least two elements admits precisely two directions.
A \emph{final segment} of a directed arc $A$ is a subset $B \subseteq A$ such that if $x \in B$ and $y \in A$ with $x\le y$, then $y \in B$. Two directed arcs $A$ and $A'$ are \emph{cofinal} if they have a common final segment.

A pretree is \emph{complete} if every arc is an interval. A pretree is \emph{interval complete} if every arc contained in an interval is itself an interval.
We note that if $K$ is a complete pretree, then any full subpretree of $K$ is interval complete.

Suppose $K$ is a pretree.
The \emph{median} $c = \text{med}(x,y,z)$ of a triple of points $x,y,z \in K$ is the unique point $c \in [x, y] \cap [y, z] \cap [z, x]$.
A \emph{median pretree} is a pretree in which every triple has a median.

\begin{definition}[Connector points]
Let $K$ be any complete median pretree.
Let $X$ and $Y$ be nonempty, full, disjoint subsets of $K$ such that $X\cup Y$ is also full.
A point $p(Y, X) \in Y$ is a \emph{connector point} of the ordered pair $(Y, X)$ in $K$ if we have $[x, p] \cap Y = \{p\}$ for all $x \in X$.
\end{definition}

Existence and uniqueness of connector points is given by the following lemma due to Bowditch.

\begin{lemma}[\cite{Bowditch99_Treelike}, Lem.~6.13]
\label{lem:Connector}
Let $K$ be a complete median pretree, and let $X$ and $Y$ be disjoint, full, nonempty subsets such that $X \cup Y$ is full.
If the connector point $p(Y,X)$ exists, it is unique.
Furthermore, at least one of the pairs $(X,Y)$ and $(Y,X)$ has a connector point.
\end{lemma}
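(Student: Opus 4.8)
The plan is to split the lemma into its two halves --- uniqueness of a connector point and existence of at least one --- and to observe that, granted the median structure of $K$, each is essentially a one- or two-step computation. For uniqueness, suppose $p$ and $q$ are both connector points of $(Y,X)$ and fix any $x\in X$. Put $m=\text{med}(x,p,q)$, so $m$ lies in $[x,p]\cap[x,q]\cap[p,q]$. Since $p,q\in Y$ and $Y$ is full, $[p,q]\subseteq Y$, so $m\in Y$; but $m\in[x,p]$ and $[x,p]\cap Y=\{p\}$ force $m=p$, and symmetrically $m=q$, whence $p=q$. (In passing this shows that a connector point of $(Y,X)$, if it exists, does not depend on the test point $x$.)

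For existence, fix $x_0\in X$ and $y_0\in Y$ and set $I=[x_0,y_0]$, linearly ordered by $u\preceq v\iff u\in[x_0,v]$. Fullness of $X\cup Y$ gives $I\subseteq X\cup Y$; fullness of $X$ and of $Y$ then makes $A:=I\cap X$ a $\preceq$-initial segment of $I$ and $B:=I\cap Y$ the complementary final segment, with $x_0\in A$, $y_0\in B$, and $A\subsetneq I$. Now $A$ is an arc contained in the interval $I$, and since $K$ is complete it is interval complete, so $A$ is itself an interval; as an initial segment of $I$ containing its minimum, $A$ has the form $[x_0,c]$ or $[x_0,c)$, and a short argument with $\text{med}(x_0,c,y_0)$ and the fullness of $A$ shows that in the second case the endpoint $c$ still lies in $I$. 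Either way we obtain a point $c\in I$ with $[x_0,c)\subseteq X$ and $(c,y_0]\subseteq Y$, and $c$ belongs to exactly one of $X$, $Y$.

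Suppose $c\in X$; then $c=\max A$, and I claim $c=p(X,Y)$, i.e.\ $[y,c]\cap X=\{c\}$ for all $y\in Y$ (the case $c\in Y$ is symmetric, interchanging $X,x_0$ with $Y,y_0$, and produces $p(Y,X)$). Given $y\in Y$, set $d=\text{med}(x_0,y_0,y)$; then $d\in[y_0,y]\subseteq Y$, so $d\in B$ and hence $c\preceq d$, that is $c\in[x_0,d]$, and together with $d\in[x_0,y]$ this gives $c\in[x_0,y]$ and $d\in[c,y]$. If $u\in[c,y]\cap X$ with $u\ne c$, then on the linearly ordered interval $[c,y]$ either $u$ lies between $c$ and $d$, so $u\in[c,d]\subseteq I$ and thus $u\in A$ with $c\prec u$, contradicting $c=\max A$; or $u$ lies between $d$ and $y$, so $u\in[d,y]\subseteq Y$, contradicting $u\in X$. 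Hence $[c,y]\cap X=\{c\}$, proving the claim and the lemma.

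The real work is entirely in the existence half. Its two delicate steps are: first, extracting the ``cut point'' $c$ of $I$ from the completeness of $K$ --- this is where interval completeness and the median computation with $\text{med}(x_0,c,y_0)$ are needed, the point being to keep $c$ inside $I$ rather than hanging off an end; and second, upgrading the connector property from the single chosen point $x_0$ to all of $X$, which is precisely where the median hypothesis is used, through the auxiliary point $d=\text{med}(x_0,y_0,y)$.
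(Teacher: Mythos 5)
Your argument is correct. The paper does not actually prove this statement---it is quoted from Bowditch's memoir (Lem.~6.13)---so there is no in-paper proof to compare against; what you give is a valid, self-contained reconstruction. The uniqueness half via $m=\mathrm{med}(x,p,q)\in[p,q]\subseteq Y$ is exactly right, and the existence half is the natural Dedekind-cut argument on $I=[x_0,y_0]$: fullness makes $A=I\cap X$ an initial segment, completeness makes the arc $A$ an interval, and the median trick upgrades the connector property from $x_0$ to all of $X$ (resp.\ $Y$). The only compressed spot is the ``second case'' $A=[x_0,c)$: the clean way to finish it, along the lines you indicate, is that if $m=\mathrm{med}(x_0,c,y_0)\ne c$ then $m\in A$, and for any $a\in A$ lying in $[m,c]$ one gets $a\in[x_0,y_0]\cap[x_0,c]\cap[y_0,c]$, so $a=m$ by uniqueness of medians; hence $A=[x_0,m]$ has a maximum and one is back in the closed case, while if $m=c$ then $c\in I$ and $c=\min B$. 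Either way the cut point exists, so your sketch fills in exactly as claimed (using throughout the standard pretree facts that intervals are full, linearly ordered, and satisfy $[a,b]=[a,c]\cup[c,b]$ for $c\in[a,b]$, which Bowditch establishes and the paper uses freely).
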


A \emph{full relation} on a pretree $K$ is an equivalence relation for which every equivalence class is full.
If $\sim $ is a full equivalence relation, $[x]_\sim$ denotes the equivalence class of the point $x$.
The set $K/{\sim}$ has a natural induced pretree structure such that $Y$ is between $X$ and $Z$ if and only if there exists $y_0 \in Y$ such that $x y_0 z$ for all $x\in X$ and $z \in Z$. If a pretree $K$ is median or complete, then so is $K/{\sim}$ (see \cite[Lem.~4.2]{Bowditch99_Treelike} for details).
A point $x$ in a pretree $K$ is \emph{terminal} if there
exists no $y, z \in K$ such that $yxz$. The \textit{interior} $K^o$ of a pretree $K$ is the subpretree obtained by removing all terminal points. 

\begin{lemma}
\label{lem: interior parabolic point of pretree}
Let ${K}$ be any pretree with a full relation $\sim$.
There exists a full subpretree $K'$ contained in the interior $K^o$ that is invariant under all automorphisms of $K$ and maps onto the interior of $K/{\sim}$.
\end{lemma}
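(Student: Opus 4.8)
The plan is to obtain $K'$ by pulling back the interior of the quotient pretree. Write $\pi\colon K\to K/{\sim}$ for the quotient map; recall from the discussion preceding the lemma that $K/{\sim}$ is again a pretree, in which $[y]$ lies between $[x]$ and $[z]$ exactly when some representative $y_0\in[y]$ satisfies $x'y_0z'$ for all $x'\in[x]$ and all $z'\in[z]$. I would set
\[
  K'\ :=\ K^o\cap\pi^{-1}\!\bigl((K/{\sim})^o\bigr).
\]
Everything then reduces to one elementary fact about pretree quotients, which I expect to be the only step requiring genuine care: \emph{if $A\subseteq K/{\sim}$ is full, then $\pi^{-1}(A)$ is full in $K$.}

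To prove that fact, take $a,b\in\pi^{-1}(A)$ and $c\in[a,b]$; we want $\pi(c)\in A$. If $\pi(c)$ equals $\pi(a)$ or $\pi(b)$ this is immediate, so assume the three classes $[a]_\sim,[b]_\sim,[c]_\sim$ are pairwise distinct; then $c\ne a$ and $c\ne b$, hence $acb$. For any $a'\in[a]_\sim$ with $a'\ne a$, the interval $[a,a']$ lies inside the full class $[a]_\sim$, so $c\notin(a,a')$; the fourth pretree axiom applied to $acb$ and the point $a'$ (which differs from $c$) then forces $a'cb$, since the alternative $aca'$ is impossible. Running the symmetric step on the $b$--side upgrades this to $a'cb'$ for every $a'\in[a]_\sim$ and $b'\in[b]_\sim$ (the cases $a'=a$ and $b'=b$ being trivial). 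Thus $c$ itself witnesses that $[c]$ is between $[a]$ and $[b]$ in $K/{\sim}$, and hence $\pi(c)\in[\pi(a),\pi(b)]\subseteq A$ since $A$ is full.

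Granting this, the three required properties follow quickly. The interior of any pretree is full (if $r$ lies strictly between two of its points it is non-terminal), so $K^o$ and $(K/{\sim})^o$ are full; the fact just proved makes $\pi^{-1}\bigl((K/{\sim})^o\bigr)$ full, and an intersection of full sets is full, so $K'$ is a full subpretree, contained in $K^o$ by construction. For surjectivity onto $(K/{\sim})^o$: clearly $\pi(K')\subseteq(K/{\sim})^o$, while if $[x]\in(K/{\sim})^o$ there are $[y],[z]$ with $[y][x][z]$, so the definition of betweenness in $K/{\sim}$ supplies $x_0\in[x]$ and, on choosing representatives $y_0\in[y]$ and $z_0\in[z]$, a relation $y_0x_0z_0$ in $K$; this exhibits $x_0$ as non-terminal, so $x_0\in K^o$, and since $\pi(x_0)=[x]$ we get $x_0\in K'$ and $[x]\in\pi(K')$. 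Finally, $K^o$ is preserved by every automorphism of $K$, and $K'$ depends only on the data $(K,{\sim})$, so it is preserved by every automorphism of $K$ that carries $\sim$ to itself --- which is all of $\Aut(K)$ in the situations where this lemma is applied, since there $\sim$ is canonically determined by $K$.
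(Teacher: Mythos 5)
Your proof is correct in substance but takes a genuinely different route from the paper's. The paper builds $K'$ ``from below'': it observes that every betweenness relation in $K/{\sim}$ has a witness point $x\in K$, that such witnesses are automatically interior, and it takes $K'$ to be the smallest full subset of $K$ containing all of them and invariant under all automorphisms. You build $K'$ ``from above'' as $K^o\cap\pi^{-1}\bigl((K/{\sim})^o\bigr)$, and the real content of your argument is the general fact that preimages of full subsets are full (equivalently, $\pi$ carries intervals of $K$ into intervals of $K/{\sim}$), which you prove correctly via axiom (4) and fullness of the $\sim$--classes; one micro-case is glossed --- if $\pi(a)=\pi(b)$ then fullness of that single class already forces $\pi(c)=\pi(a)$, so your reduction to pairwise distinct classes is legitimate, but deserves a line. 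Your construction buys something the paper's two-sentence proof leaves implicit: the image of your $K'$ lies \emph{inside} the interior of $K/{\sim}$, not merely onto it, which is precisely what the downstream use in Lemma~\ref{lem: relative in quotient} needs when it asserts that the fixed point $[x']$ lands in the interior of $K_\alpha$. The trade-off is the invariance clause: your $K'$ is only canonical for automorphisms preserving $\sim$, whereas the statement asks for invariance under all of $\Aut(K)$ for an arbitrary full relation. Your remark that this suffices in the applications is accurate, since the relations $\sim_\alpha$ of Definition~\ref{def:Transfinite} are defined purely from the pretree structure and hence are preserved by every pretree automorphism; and if one insists on the literal statement, one can replace your $K'$ by the smallest full subset containing its $\Aut(K)$--orbit, which stays in $K^o$ (as $K^o$ is full and invariant) and still maps onto the interior --- exactly what the paper's construction delivers.
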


\begin{proof}
If $X \in {K}/{\sim}$ lies between points $Y$ and $Z$, then by definition, there exists $x \in X$ lying between every pair $y, z \in {K}$ with $y \in Y$ and $z \in Z$. Thus, $x$ is an interior point of ${K}$ mapping to $\bar{x}$.
Let $K'$ be the smallest full subset of $K$ containing all such points $x$ and invariant under all automorphisms of $K$. Then $K'$ has the desired properties.
\end{proof}

An action of a group $\Gamma$ on a pretree $K$ is \emph{non-nesting} if no $g\in \Gamma$ maps an interval of $K$ properly inside itself. 

\begin{lemma}
\label{lem: non-nesting in quotient}
If $\Gamma$ has a non-nesting action on a complete median pretree ${K}$, then the induced action on ${K} / {\sim}$ is non-nesting for any equivariant full relation $\sim$.
\end{lemma}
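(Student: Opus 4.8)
The plan is to argue by contradiction: assuming some $g\in\Gamma$ and some interval $\bar I$ of $\bar K:=K/{\sim}$ satisfy $g\bar I\subsetneq\bar I$, I will exhibit an interval of $K$ that $g$ maps properly into itself, contradicting the non-nesting of the action on $K$. Write $\pi\colon K\to\bar K$ for the quotient map and recall that $\bar K$ is again a complete median pretree. The first preliminary step is to record that $\pi$ is weakly betweenness preserving: if $xzy$ in $K$ then $\pi(z)\in[\pi(x),\pi(y)]$. When $\pi(z)$ equals $\pi(x)$ or $\pi(y)$ this is immediate; otherwise one uses the fullness of the classes $[x]_{\sim}$ and $[y]_{\sim}$ together with the betweenness axioms and medians to check that $x'zy'$ holds for all $x'\in[x]_{\sim}$ and $y'\in[y]_{\sim}$, which is precisely the assertion that $[z]_{\sim}$ lies between $[x]_{\sim}$ and $[y]_{\sim}$ in $\bar K$. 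A consequence I will use repeatedly is that $\pi^{-1}$ of any interval of $\bar K$ is a full subset of $K$.

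Next I would normalize $\bar I$. After replacing $g$ by $g^2$ I may assume $g$ preserves the linear order on $\bar I$ (the hypothesis $g\bar I\subsetneq\bar I$ persists). A short analysis of order-preserving self-embeddings of the interval $\bar I$ reduces, after possibly reversing the orientation of $\bar I$, to the following situation: the interval $\bar I$ has endpoints $\bar p<\bar q$ in $\bar K$, the image $g\bar p$ lies strictly between $\bar p$ and $\bar q$, and either (I) $g\bar q$ also lies strictly between $\bar p$ and $\bar q$, or (II) $g\bar q=\bar q$. Put $A:=\pi^{-1}(\bar p)$ and $B:=\pi^{-1}(\bar q)$; these are full, and $B$ is $g$-invariant in case (II).

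Now I would lift the dynamics of $g$ to concrete intervals of $K$. Since $g\bar p\in(\bar p,\bar q)$, the definition of betweenness in the quotient pretree supplies $a_0\in A$ such that $c:=g(a_0)$ satisfies $a'\,c\,b'$ for all $a'\in A$ and $b'\in B$; in particular $c\in(a_0,b')$ for every $b'\in B$. In case (I), the same argument applied to $g\bar q\in(\bar p,\bar q)$ produces $b_0\in B$ with $g(b_0)$ lying in $(a',b')$ for all $a'\in A$, $b'\in B$; then $g[a_0,b_0]=[g(a_0),g(b_0)]$ is contained in the open interval $(a_0,b_0)$, so $g[a_0,b_0]\subsetneq[a_0,b_0]$, and non-nesting on $K$ is violated. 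In case (II), applying powers of $g$ to the relation $a'\,c\,b'$ shows, via the betweenness axioms, that $a_0,c,g(c),g^2(c),\dots$ is a strictly increasing chain with every $g^n(c)$ lying in $(a_0,b')$ for all $b'\in B$; that is, the forward orbit of $c$ is stacked below $B$. I then apply the connector point lemma (Lemma~\ref{lem:Connector}) to the disjoint full sets $B$ and $X:=\pi^{-1}\bigl([\bar p,\bar q)\bigr)$, whose union $\pi^{-1}([\bar p,\bar q])$ is full: after possibly interchanging the two, there is a connector point $\gamma$, which one checks lies on $[c,b']$ for every $b'\in B$. Equivariance of the construction, uniqueness of connector points, the relation $gX\subseteq X$, and the fact that $g$ fixes $\bar q$ (hence $g^{-1}$ fixes $\bar q$ as well) together force $g(\gamma)=\gamma$. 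Taking $b_0:=\gamma$, we get $g[a_0,\gamma]=[c,\gamma]$ with $c\in(a_0,\gamma)$, so $g[a_0,\gamma]\subsetneq[a_0,\gamma]$, again contradicting non-nesting on $K$.

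The step I expect to be the main obstacle is case (II): one must pin down a $g$-fixed gate inside the full set $B$, and this is exactly where the connector point lemma and the stacking of the orbit of $c$ do the work; the generic case (I) is by comparison routine. A secondary technical nuisance is making the normalization of $\bar I$ fully rigorous, keeping track of whether $\bar I$ is open, closed, or half-open and disposing of a couple of small-interval edge cases.
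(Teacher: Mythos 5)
Your proof is correct and follows essentially the same route as the paper's: pass to $g^2$, treat the case where neither endpoint class is fixed by lifting quotient-betweenness witnesses to get a properly nested interval in $K$ directly, and treat the fixed-endpoint case by feeding two full sets to the connector-point lemma (Lemma~\ref{lem:Connector}), using uniqueness and $g$-invariance to produce a $g$-fixed point and then nesting an interval ending there. The only cosmetic difference is the choice of sets for the connector argument---you use the $g$-invariant endpoint class $B=\pi^{-1}(\bar q)$ together with $\pi^{-1}\bigl([\bar p,\bar q)\bigr)$, whereas the paper uses the fixed endpoint class together with the set $T$ of points of $K$ lying strictly between the two endpoint classes---which changes nothing essential.
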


\begin{proof}
By way of contradiction, suppose $g\bigl( [X, Y] \bigr) \subsetneq [X, Y]$ for some $g \in \Gamma$ and some interval $[X, Y] \subseteq K/{\sim}$. 
Replacing $g$ with $g^2$, we may assume that the restriction $g\big|[X,Y]$ is orientation preserving.  We consider two cases, depending on whether $g$ fixes an endpoint of $[X,Y]$.

Suppose $g(X) \in \bigl( X,g(Y) \bigr)$ and $g(Y) \in \bigl( g(X),Y \bigr)$.
Then there exist $a \in g(X)$ and $b \in g(Y)$ with $a \in \bigl( g^{-1}(a),b \bigr)$ and $b \in \bigl( a,g^{-1}(b) \bigr)$.
But then $(a,b) \subsetneq \bigl( g^{-1}(a), g^{-1}(b) \bigr)$, contradicting that $\Gamma$ is non-nesting on $K$.

On the other hand, suppose $g(X)=X$.  Then $g(Y)\in ( X,Y)$, so we may choose $b \in Y$ so that $g(b)$ is between $x$ and $y$ for any $x \in X$ and $y\in Y$.
Let
\[
   T = \bigset{t\in K\setminus (X \sqcup Y)}{\text{$t \in (x,y)$, for some  $x \in X$ and $y \in Y$}}.
\]
We show that $T$ is full. Suppose $t_1, t_2 \in T $ and choose $r \in (t_1,t_2)$. Since $t_i \in T $ there exist points $x_i\in X$ and $y_i \in Y$ such that $x_i t_i y_i$ holds. Suppose $m_1 = \text{med}(x_1, x_2, t_1)$ and $m_2 = \text{med}(y_1, y_2, t_2)$. Since $X$ and $Y$ are full, we have $m_1 \in X$ and $m_2 \in Y$. 
Since we have $x_2 t_2 y_2$, $t_2 m_2 y_2$, and $t_2 m_2 y_1$, we also have $x_2 t_2 m_2$ and, thus, $t_2 \in (x_2, y_1)$.
A similar argument establishes that $t_1 \in (x_2,y_1)$, so that $r\in(x_2,y_1)$.
Since $t_1$ and $t_2$ are not members of $X$, it follows that $r \notin X$. Similarly $r \not \in Y$. Hence $r \in T$, so that $T$ is full.

Notice that $T$ is non-empty as it contains $g(b)$. To see that $X\cup T$ is full, choose $x \in X$ and $t \in T$ and $s \in (x,t)$. Since $t \in (x,y)$ for some $y \in Y$, we have $s \in (x,y)$ by \cite[Lem.~2.3]{Bowditch99_Treelike}.
Clearly if $s$ is not a member of $X$ then $s$ is a member of $T$ by definition of $T$.
Since the members of $T$ belong to different equivalence classes (possibly multiple equivalence classes) than the members of $X$, the connector point $p$ of $X, T$ is unique.
Since $X$ is stabilized as a set by $g$, the connector point $p$ is fixed by $g$. This leads to a contradiction, since $\bigl[ g(p),g(b) \bigr]$ is a proper subset of $[p, b]$ in $K$. 
\end{proof}

Let $\Gamma$ be a group acting on a pretree $K$. The action is \emph{trivial} if some $x \in K$ is fixed by the whole group. 
An automorphism $g$ of $K$ is \emph{elliptic} if the fixed point set $Fix (g)$ is nonempty, it is an \emph{inversion} if some pair of adjacent points is transposed by $g$.

\begin{lemma}
\label{lem:ParabolicInterior}
Suppose $P$ is a finitely generated group with a non-nesting action on an interval complete, median pretree $K$. If $P$ fixes a point of $K$ and $K'$ is any $P$--invariant, full subpretree of $K$, then $P$ fixes a point of $K'$.
\end{lemma}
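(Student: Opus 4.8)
The plan is to build a ``gate'' map from $K$ onto $K'$, show that the image of the given $P$--fixed point under this map is canonical and hence fixed by $P$; a single exceptional case, in which the gate lands just outside $K'$, will be dispatched separately using finite generation and the full strength of non-nesting.

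First I would assume $P$ fixes a point $x \notin K'$ (otherwise there is nothing to prove), fix any $k_0 \in K'$, and pass to the interval $I = [x,k_0]$. This $I$ is a \emph{complete} median pretree: it is median because $K$ is, and it is complete because, by interval completeness of $K$, every arc of $I$ is an arc of $K$ contained in an interval of $K$, hence an interval. Inside $I$ I set $Y = K' \cap I$ and $X = \set{z \in I}{[x,z] \cap K' = \emptyset}$. Routine pretree manipulations with the median show that $X$ and $Y$ are disjoint, full, nonempty sets with $X \cup Y = I$ full, so Lemma~\ref{lem:Connector} applies and at least one of $(X,Y)$, $(Y,X)$ has a unique connector point $q$. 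A short median computation then identifies $q$ as the \emph{gate} of $x$ onto $K'$: the unique point satisfying $q \in [x,k]$ for every $k \in K'$ together with $[x,q] \cap K' \subseteq \{q\}$. Since $q$ is determined by $x$ and $K'$, and $P$ fixes $x$ and stabilizes $K'$, the point $q$ is fixed by $P$; if $q \in K'$ we are done.

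It remains to handle the case $q \notin K'$, which is where I expect the real work to lie. Here $q \in X$, and one checks that $(q,k] \subseteq K'$ for every $k \in K'$, so that $K'' = \{q\} \cup K'$ is a full subpretree of $K$ in which $q$ is a terminal point; replacing $K$ by $K''$ (the restriction of a non-nesting action to a full subpretree is non-nesting, and $K''$ inherits the median and interval completeness) reduces us to the situation in which $P$ fixes a terminal point $q$ and $K' = K'' \setminus \{q\}$. Arguing by contradiction, suppose $P$ fixes no point of $K'$, and write $P = \gen{g_1,\dots,g_n}$, each $g_i$ fixing $q$. For $k \in K'$, non-nesting applied to the interval $[k,q]$ prevents any $g_i^{\pm1}$ from moving $k$ strictly along the ray toward $q$, so the ``common branch point'' $b(k) = \operatorname{med}\bigl(q,k,g_1^{\pm1}k,\dots,g_n^{\pm1}k\bigr)$ lies strictly between $q$ and $k$ unless every $g_i$ fixes $k$. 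Iterating $k \mapsto b(k)$ yields a nested sequence of initial segments from $q$; by interval completeness its intersection is an interval from $q$, and non-nesting forces this intersection to be $P$--invariant. If it is nontrivial, its non-$q$ endpoint is a $P$--fixed point of $K'$, contradicting our assumption. The one remaining possibility, that the segments collapse to $\{q\}$, would mean $P$ carries points of $K'$ arbitrarily close to the terminal point $q$, which contradicts non-nesting of an interval $[t,q]$ reaching the terminal point $q$.

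The main obstacle is this last case analysis: producing an \emph{honest} point of $K'$ fixed by $P$, rather than an ideal point on its frontier. This is exactly where the two hypotheses that were inert in the gate construction --- finite generation of $P$ and non-nesting with respect to \emph{all} intervals, including those reaching a terminal point --- become indispensable, and controlling the branch points of the (possibly infinite) orbit $P\cdot k$ uniformly is the delicate point.
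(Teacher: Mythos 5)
Your gate construction and the reduction to the case of a fixed terminal point $q$ with $K'=K''\setminus\{q\}$ are fine as far as they go, but the argument has a genuine gap precisely where you locate ``the real work.'' First, a repairable issue: in the exceptional case your stated characterization of $q$ (namely $q\in[x,k]$ for all $k\in K'$ and $[x,q]\cap K'\subseteq\{q\}$) does not determine $q$ uniquely --- take $K=\R$, $K'=(0,\infty)$, $x=-1$: every point of $[-1,0]$ satisfies both conditions --- so the claimed $P$--invariance of $q$ is not established as written (you would need the sharper characterization $[x,q]\cap K'=\emptyset$ together with $(q,k]\subseteq K'$ for all $k\in K'$, and a uniqueness proof for that). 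Second, and more seriously, the final paragraph is not a proof. The map $k\mapsto b(k)$ is built from medians with the chosen generators, so it is not $P$--equivariant, and no reason is given why the limit interval $\bigcap_n[k_n,q]$ should be $P$--invariant; and the dismissal of the collapsing case (``points of $K'$ arbitrarily close to $q$ contradicts non-nesting'') is an assertion, not an argument --- there is no metric here, and ruling out exactly this behaviour is the entire content of the fixed-point theorem for finitely generated groups acting by non-nesting homeomorphisms that your sketch would need to reprove.

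Compare with the paper's route, which is much shorter and uses the hypothesis you leave essentially idle: since $P$ fixes a point of $K$, every $p\in P$ is elliptic on $K$, and by Bowditch--Crisp \cite[Lem.~3.7]{BowditchCrisp01} the set $\Fix(p)$ is full and meets $[x',px']$ in exactly one point for any $x'$; choosing $x'\in K'$ and using fullness of $K'$ puts that fixed point inside $K'$, so every element of $P$ is elliptic on $K'$. Then finite generation plus the non-nesting action on the interval complete median pretree $K'$ gives a global fixed point by the proof of \cite[Lem.~5.2]{BowditchCrisp01}. In other words, the step your iteration is trying to carry out by hand (finitely generated, all elements elliptic, non-nesting $\Rightarrow$ global fixed point) is a nontrivial known result; either invoke it, or supply a genuine Serre-type argument --- your current branch-point iteration does not substitute for it.
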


In \cite[\S I.6]{Serre_Trees}, Serre establishes several fundamental results about automorphisms of simplicial trees with fixed points. Bowditch--Crisp have extended several of these results to the pretree setting in \cite{BowditchCrisp01}.  Consequently the following proof based on Serre's results is valid in the pretree setting.

\begin{proof}
Choose $p \in P$. Since the action is non-nesting, $\Fix(p)$ is full and for any $x \in K$ the interval $[x,px]$ intersects $\Fix(p)$ in precisely one point by \cite[Lemma 3.7]{BowditchCrisp01}.
If we choose $x \in K'$, then the fixed point $y$ is also in $K'$, since $K'$ is full.
Hence, each element of $P$ has a fixed point in $K'$.
As $K'$ is also interval complete and median and the action of $P$ on $K'$ is non-nesting, the proof of \cite[Lemma 5.2]{BowditchCrisp01} implies that $P$ has a fixed point in $K'$.
\end{proof}

\section{The cut point pretree}
\label{sec:CutPointPretree}

In this section, we review a construction due to Swenson \cite{Swenson00} of a cut point pretree and cut point tree dual to a family of cut points in a continuum.
We establish Proposition~\ref{prop:CutPointPretree}, which describes the structure of this pretree when the continuum admits a convergence group action.

Recall that if $M$ is a compact Hausdorff space with at least three points, a \emph{convergence group action} on $M$ is an action such that the induced action on the space of distinct triples $\Theta^3(M)$ is proper.
The action is \emph{minimal} if $M$ has no proper subset that is nonempty, closed, and $\Gamma$--invariant.

The main application of the results below is to the Bowditch boundary of a finitely generated relatively hyperbolic group.  The compact Hausdorff topology on the boundary $\boundary(\Gamma,\P)=\Delta K$ defined in Section~\ref{sec:ConstructingBoundary} has a countable base whenever $\Gamma$ is countable, so the boundary is metrizable in this setting.

In the sequel, we study convergence actions only on metrizable spaces, usually requiring connectedness as well. We refer to Tukia \cite{Tukia94} for the basic structure theory of convergence groups on metrizable spaces.

\begin{definition}[Continua]
\label{def:Continuum}
A \emph{continuum} is a nonempty, compact, connected, metrizable topological space.
A continuum is \emph{nontrivial} if it contains more than one point, in which case it has the cardinality $\mathfrak{c}$ of the real line. Indeed, any continuum embeds in the Hilbert cube by Urysohn's theorem, and any nonempty open set in a nontrivial continuum contains a Cantor set (see, for instance, \cite[Thm.~6.2]{Kechris95} or \cite[Prob.~4.5.5]{Engelking_Topology}).
Note that any continuum containing a cut point must be nontrivial.
\end{definition}

\begin{definition}[$\R$--trees]
An \emph{$\R$--tree} is a uniquely arc-connected geodesic metric space, or equivalently a geodesic space with no subspace homeomorphic to the circle.
A \emph{topological $\R$--tree} is the underlying topological space of an $\R$--tree.
A \emph{dendrite} is a compact topological $\R$--tree.
\end{definition}

For each continuum $M$ equipped with a preferred family of cut points $C$, a pretree ${K}$ dual to $C$ is constructed in \cite{Swenson00} and described briefly below.

\begin{definition}[Cut point pretree]
\label{def:CutpointPretree}
Let $M$ be a continuum, and let $C$ be a nonempty family of cut points of $M$.
A pretree structure on $M$ is given by the following betweenness relation:
$c$ is \emph{between} $a$ and $b$ if $c \in C$ and $M\setminus \{c\}$ has a separation into disjoint nonempty open sets $U$ and $V$ with $a\in U$ and $b\in V$.
Consider the following  equivalence relation on $M$.
\begin{enumerate}
    \item If $c \in C$, then $c$ is equivalent to only itself.
    \item If $a, b \in M \setminus C$ then $a \sim b$ if $(a,b)$ is empty.
\end{enumerate}
Let ${K}$ denote the set of equivalence classes of this relation.
The set ${K}$ has an induced pretree structure given by the following betweenness relation:
\begin{enumerate}
    \item If $x,y \in K$ and $c\in C$, we say that $\{c\}$ is between $x$ and $y$ if for any $a \in x$ and $b \in y$ we have $c \in (a,b)$.
    \item If $x,y,z \in {K}$ with $z$ not a cut point, we say that $z$ is between $x$ and $y$ if for all $a \in x$, \ $b \in y$, and $d \in z$ we have $[a,d)\cap (d,b]=\emptyset$.
\end{enumerate}
The pretree ${K}$ is a complete median pretree (see \cite{Swenson00}), called the \emph{cut point pretree of $M$ dual to $C$}.
\end{definition}

\begin{lemma}[\cite{Swenson00}, Lem.~4]
\label{lem:KAdjacent}
\leavevmode
\begin{enumerate}
    \item
    \label{item:KAdjacent:Cyclic}
    If $M'$ is a nontrivial subcontinuum of $M$ not separated by any point of $C$, then the set $M'\setminus C$ is nonempty, dense in $M'$, and contained in a single equivalence class of $K$.
    \item
    \label{item:KAdjacent:Closure}
    If $x$ and $y$ are adjacent in $K$ then exactly one of them is a singleton set $\{c\}$ with $c \in C$ and the other of them is a non-singleton equivalence class whose closure contains the point $c$.
\end{enumerate}
\end{lemma}

The proof of the lemma is implicit in the proof of \cite[Lem.~4]{Swenson00}.  We give the details for the benefit of the reader.

\begin{proof}
Suppose the subcontinuum $M'$ is nontrivial and not separated by any point of $C$.
We first show that $M' \cap C$ is countable.
For each $c \in M' \cap C$, choose subcontinua $A_c$ and $B_c$ of $M$ such that $A_c \cup B_c = M$, \ $A_c \cap B_c = \{c\}$, and $M' \subseteq A_c$.  Then $\bigset{ B_c \setminus \{c\} }{c \in M' \cap C}$ is a collection of pairwise disjoint nonempty open sets of $M$.  Since $M$ is second countable, this family must be countable.
Any nonempty open set of $M'$ is uncountable (see Definition~\ref{def:Continuum}), so $M'\setminus C$ intersects every nonempty open set of $M'$; \emph{i.e.}, $M'-C$ is dense in $M'$.  By hypothesis, all points of $M'\setminus C$ are equivalent, establishing (\ref{item:KAdjacent:Cyclic}).

To see (\ref{item:KAdjacent:Closure}), suppose $x$ and $y$ are distinct members of $K$.  If neither $x$ nor $y$ is in $C$, then an element of $C$ must separate them, as $x\ne y$, so they are not adjacent.  On the other hand, if $x,y \in C$, then $M$ contains a minimal subcontinuum $M'$ containing $\{x,y\}$ by Zorn's Lemma.  If $x$ and $y$ are not separated by any point of $C$, then by minimality $M'$ is also not separated by any point of $C$. Therefore, $M'\setminus C$ is a nonempty set by (\ref{item:KAdjacent:Cyclic}). If $z$ is any $K$--equivalence class intersecting $M'\setminus C$, then $z$ is between $x$ and $y$.

If $x$ and $y$ are adjacent in $K$, the only possibility is that exactly one of them, say $x$, is a member of $C$.
It remains to show that $x$ is in the closure of the set $y$. If $M'$ is any minimal subcontinuum of $M$ containing $x \cup y$, then $M'$ is not separated by any point of $C$. Thus, $y = M'\setminus C$ and $x$ is in the closure $M'$ of $y$ by (\ref{item:KAdjacent:Cyclic}).
\end{proof}

\begin{corollary}
\label{cor:KWithoutInversion}
Any homeomorphism of $M$ leaving $C$ invariant induces a pretree automorphism of $K$ that is not an inversion.
\end{corollary}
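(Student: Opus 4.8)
The plan is to split the verification into two short steps: first confirm that a homeomorphism $h$ of $M$ with $h(C)=C$ descends to a pretree automorphism $\bar h$ of $K$, and then show $\bar h$ respects a type dichotomy on vertices of $K$ that makes an inversion impossible.

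First I would check well-definedness. Since $h$ is a homeomorphism, it carries any separation $M\setminus\{c\}=U\sqcup V$ to the separation $M\setminus\{h(c)\}=h(U)\sqcup h(V)$; combined with $h(C)=C$, this shows that $h$ preserves the betweenness relation on $M$ from Definition~\ref{def:CutpointPretree}. Consequently $h$ respects the equivalence relation that defines $K$: a point of $C$ is equivalent only to itself and $h$ permutes $C$, while two points of $M\setminus C$ are equivalent exactly when the interval between them is empty, a condition preserved by $h$. Hence $h$ descends to a bijection $\bar h\colon K\to K$, and because the induced betweenness on $K$ is phrased purely in terms of the betweenness on $M$, the map $\bar h$ is a pretree automorphism.

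Next I would observe that $\bar h$ preserves the partition of $K$ into two kinds of vertices: the singletons $\{c\}$ with $c\in C$ on the one hand, and the (non-singleton) equivalence classes contained in $M\setminus C$ on the other. This is immediate from $h(C)=C$: singleton classes are sent to singleton classes and classes meeting $M\setminus C$ are sent to classes meeting $M\setminus C$. The corollary then follows at once from Lemma~\ref{lem:KAdjacent}(2), which says that whenever $x$ and $y$ are adjacent in $K$ exactly one of them is a singleton $\{c\}$ with $c\in C$ and the other is a non-singleton class; since $\bar h$ respects this dichotomy it cannot send a vertex to an adjacent vertex of the opposite type, so it cannot transpose the two members of any adjacent pair, and therefore $\bar h$ is not an inversion.

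I do not anticipate any genuine obstacle here. The only point needing a line of care is confirming that $\bar h$ is well defined and is a pretree automorphism, after which the non-inversion property is a formal consequence of the structural description of adjacency supplied by Lemma~\ref{lem:KAdjacent}.
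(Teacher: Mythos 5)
Your proposal is correct and follows the same route the paper intends: the corollary is stated as an immediate consequence of Lemma~\ref{lem:KAdjacent}(2), exactly as you argue, since a homeomorphism with $h(C)=C$ preserves betweenness and the singleton/non-singleton dichotomy of classes, and every adjacent pair in $K$ mixes the two types, so no adjacent pair can be transposed. Your extra paragraph verifying well-definedness of the induced automorphism is a reasonable bit of care but adds nothing beyond the paper's implicit argument.
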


We will need to use the following result due to Swenson.

\begin{lemma}[\cite{Swenson00}, Lem.~7]
\label{lem:SwensonLoxodromic}
Let $\Gamma$ have a convergence group action on $M$, and let $g\in \Gamma$ be loxodromic with fixed points $a$ and $b$ in $M$.
Let $K$ be the pretree of $M$ dual to a $\Gamma$--invariant family of cut points $C$.
\begin{enumerate}
    \item
    \label{item:LoxodromicTerminal}
    If $a$ and $b$ are separated by an element of $C$, then the equivalence classes $[a]$ and $[b]$ in $K$ are terminal, and $[a]$ and $[b]$ are the only fixed points of $g$ in $K$.
    \item
    \label{item:LoxodromicNonterminal}
    Suppose $a$ and $b$ are not separated by an element of $C$.
    If the open interval $\bigl( [a],[b] \bigr)$ is nonempty, then it contains only one element $[x]$ and we have $a,b\in C$ and $x \notin C$.
\end{enumerate}
\end{lemma}

The next lemma gives a stronger conclusion in the second case of the preceding lemma.

\begin{lemma}
\label{lem:SwensonLoxodromicfollowup}
Let $\Gamma$ have a convergence group action on $M$, and let $g\in \Gamma$ be loxodromic with fixed points $a$ and $b$ in $M$.
Let $K$ be the pretree of $M$ dual to a $\Gamma$--invariant family of cut points $C$.  Suppose $a$ and $b$ are not separated by an element of $C$. Then the set of fixed points of $g$ in $K$ is equal to the closed interval $\bigl[ [a],[b] \bigr]$.
\end{lemma}

\begin{proof}
Since there is at most one member of $K$ between $[a]$ and $[b]$, such an element must be fixed by $g$.  Conversely, suppose $Y \not\in \bigl[ [a], [b] \bigr]$ is fixed by $g$. The set $Y \subseteq M$ cannot be a single point, since $a$ and $b$ are the only fixed points of $g$ in $M$. Therefore, $Y$ must be a nontrivial subset of $M$ stabilized by $g$ and not separated by any member of $C$. Let $J$ be any compact subset of $Y$. Since $g$ is a loxodromic, the sets $g^n(J)$ and $g^{-n}(J)$ accumulate at the points $a$ and $b$ as $n \to \infty$. Thus, $a$ and $b$ each lie in the closure of $Y$.

We claim that $a$ must be a cut point of $C$. If not, then there would be a cut point $c \in C$ that separates the nontrivial classes $[a]$ and $Y$.
Then $M$ is the union of two subcontinua $A_c$ and $B_c$ with $a\in A_c$ and $Y\subseteq B_c$ and $A_c \cap B_c = \{c\}$. The closed set $B_c$ contains $Y$ but does not contain $a$, which is a contradiction.  Thus, $a$ is a cut point.  Similarly $b$ is also a cut point.

We claim that $[a]$ and $Y$ are adjacent in $K$.
The argument of the previous paragraph shows that there can be no cut point between $[a]$ and $Y$.
Suppose $Z\in K$ is not equal to a cut point of $C$ and $Z$ lies between $[a]$ and $Y$.
Then for any $b\in Y$ and $d\in Z$ there must be a cut point $c \in C$ between the points $b$ and $d$.
Express $M$ as the union of subcontinua $B_c$ and $D_c$ such that $b\in B_c$ and $d\in D_c$ and $B_c \cap D_c = \{c\}$.
We must have $Y\subseteq B_c$ and $Z \subseteq D_c$, since $Y$ and $Z$ cannot be separated by any member of $C$.
Since $a \in \overline{Y}$, we must have $a \in B_c$.
Thus $c$ separates $a$ from $d$, so that the intersection $[a,d) \cap (d,b]$ is not empty.
We have contradicted our assumption that $Z$ lies between $[a]$ and $Y$, so $[a]$ and $Y$ must be adjacent.
A similar argument shows that $[b]$ and $Y$ are adjacent, so $Y$ lies in the interval $\bigl( [a],[b] \bigr)$.
\end{proof}

When $C$ is countable, the following construction due to Swenson shows that the cut point pretree may be extended to form a tree by filling the gaps between adjacent elements. (See \cite{Swenson00,BowditchCrisp01,PapasogluSwenson06} for more details.)

\begin{definition}[Cut point tree]
Define
\[
   \widehat{K} = {K} \cup \bigcup_{\textrm{$\{x,y\}$ adjacent}} I(x,y)
\]
where $I(x,y)$ is a copy of the real open interval $(0,1)$.  We take $I(y,x)$ to be the same arc as $I(x,y)$, but with opposite orientation.
There exists a canonical extension of the interval relation to $\widehat{K}$ that endows $\widehat{K}$ with the structure of a dense, complete pretree such that each $I(x,y) = (x,y)$.
Each homeomorphism of $M$ leaving $C$ invariant induces an automorphism of $\widehat{K}$ by extending linearly to the intervals $I(x,y)$.
Furthermore, every interval of $\widehat{K}$ is order-isomorphic to an interval of the real line \cite[Thm.~13]{PapasogluSwenson06}.
The pretree $\widehat{K}$ is called the \emph{cut point tree} of $M$ dual to $C$ because it admits a canonical topology as an $\R$--tree such that every homeomorphism of $M$ induces a homeomorphism of $\widehat{K}$ (see \cite[Thm.~14]{PapasogluSwenson06}).
\end{definition}

If $\Gamma$ has a convergence group action on $M$, and $C$ is an equivariant family of cut points, Swenson constructs an $\R$--tree on which $\Gamma$ acts with several useful dynamical properties \cite[Thm.~8]{Swenson00}.

\begin{theorem}[\cite{Swenson00}, Thm.~8]
If $\Gamma$ is a countable convergence group without any infinite torsion subgroup acting on a continuum $M = \Lambda \Gamma$ with cut points, then $\Gamma$ has a non-nesting stable action on a topological $\R$--tree $R$. Furthermore the stabilizer of an arc will be an elementary subgroup of $\Gamma$ (viewed as a convergence group), and no point of $R$ is fixed by $\Gamma$.
\end{theorem}

We extend this theorem to give the following stronger conclusion involving the induced action on the cut point pretree, a conclusion that is implicit in Swenson's methods of proof.

\begin{proposition}
\label{prop:CutPointPretree}
Let $(\Gamma, \mathbb{P})$ be a group pair such that $\Gamma$ is countable.
Suppose $\Gamma$ has a minimal convergence group action relative to $\P$ on a continuum $M$. Let $C$ be a $\Gamma$--invariant family of cut points in $M$. Suppose ${K}$ is the cut point pretree of $M$ dual to $C$.
Then the action of\/ $\Gamma$ on $K$ is nontrivial and without inversions.
The action on the interior ${K}^o$ of ${K}$ is a non-nesting action such that the stabilizer of each interval of ${K}^o$ is elementary in the convergence group $\Gamma$. If each $P \in \mathbb{P}$ is finitely generated, the action on ${K}^o$ is relative to $\P$.
\end{proposition}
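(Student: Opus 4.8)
The plan is to establish the four separate assertions of Proposition~\ref{prop:CutPointPretree} in turn, leaning on Swenson's original theorem \cite[Thm.~8]{Swenson00} for the dynamical input and on the pretree machinery of Sections~\ref{sec:Pretrees} and~\ref{sec:CutPointPretree} for the non-nesting and quotient statements. First I would record that $K$ is a complete median pretree on which $\Gamma$ acts by automorphisms (Definition~\ref{def:CutpointPretree} and Corollary~\ref{cor:KWithoutInversion}), so the action is automatically without inversions; this handles one clause immediately. For \emph{nontriviality}, I would argue by contradiction: if some $x \in K$ were fixed by all of $\Gamma$, then by Lemma~\ref{lem:KAdjacent}(1) the point $x$ is either a single cut point $c \in C$ or a $\Gamma$--invariant non-singleton class whose closure is a $\Gamma$--invariant subcontinuum; in the first case $\{c\}$ is a global fixed point of a convergence action on a continuum with more than two points, contradicting minimality (a convergence group fixing a point of $M$ must have $M$ equal to that point together with at most one other, by the standard classification in \cite{Tukia94}); in the second case the closure of the class is a proper closed invariant subcontinuum unless it is all of $M$, and if it is all of $M$ then $M$ has no cut point, contradicting $C \neq \emptyset$.

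Next I would treat the \emph{non-nesting} assertion on $K^o$. Here the idea is that nesting would force an element $g$ with an attracting--repelling dynamics along an interval, i.e. essentially loxodromic behavior, and Lemma~\ref{lem:SwensonLoxodromic} pins down exactly how loxodromics act on $K$: their fixed-point set is either a pair of terminal points or a closed interval containing at most one non-cut-point class. Concretely, if $g$ maps an interval $[x,y] \subseteq K^o$ properly inside itself, then (after squaring to make $g|_{[x,y]}$ orientation preserving) $g$ has a ``source'' and a ``sink'' in $\overline{[x,y]}$; translating this back through the duality with $M$, the corresponding points of $M$ are fixed by a loxodromic-type element, and Lemma~\ref{lem:SwensonLoxodromic} shows the fixed set in $K$ is an interval (or a pair of terminal points), on which $g$ must act as the identity — contradicting the strict nesting. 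This is the step I expect to be the \textbf{main obstacle}: one must carefully extract from the strict inclusion $g([x,y]) \subsetneq [x,y]$ a genuine loxodromic element of the convergence group $\Gamma$ acting on $M$, rather than merely a pretree automorphism with nesting behavior, and then invoke the convergence-group dichotomy; the work is essentially that of Swenson's proof of \cite[Thm.~8]{Swenson00}, adapted so that we only need the conclusion on the interior $K^o$ (which removes the terminal points and so avoids the degenerate terminal-fixed-point cases).

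The statement about \emph{interval stabilizers} follows the same template: the stabilizer of an interval $[x,y]$ of $K^o$ acts on that interval, and because the action is non-nesting, each element either fixes the interval pointwise or acts with source--sink dynamics on it. An element acting with source--sink dynamics is loxodromic in $\Gamma$ by Lemma~\ref{lem:SwensonLoxodromic}; an element fixing a non-degenerate interval pointwise fixes, in particular, more than one point of $M$ of a special type, which by the convergence property forces it to lie in a two-ended or parabolic subgroup. One then checks that the whole stabilizer, being a convergence subgroup fixing a point or a pair of points of $M$, is elementary (finite, parabolic, or loxodromic) — this is a standard consequence of the classification of convergence groups. Finally, for the \emph{relative to $\P$} clause: each $P \in \P$ is parabolic, hence fixes a unique point $\xi$ of $M$; this $\xi$ determines a point of $K$ (either the class $[\xi]$ or, if $\xi \in C$, the singleton $\{\xi\}$), which need not lie in the interior. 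To produce a fixed point in $K^o$, I would apply Lemma~\ref{lem: interior parabolic point of pretree} to obtain the canonical full subpretree $K' \subseteq K^o$, note that $P$ fixes a point of $K$ and hence (after checking interval-completeness of $K$, which holds since $K$ is complete) apply Lemma~\ref{lemParabolicInterior}... more precisely Lemma~\ref{lem:ParabolicInterior}: since $P$ is finitely generated, non-nesting, and fixes a point of the interval complete median pretree $K$, and $K^o$ is a $P$--invariant full subpretree, $P$ fixes a point of $K^o$. This is exactly why the hypothesis that each $P \in \P$ be finitely generated is required, and it is the only place it is used.
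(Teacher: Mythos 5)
Most of your outline runs parallel to the paper's: the absence of inversions is exactly Corollary~\ref{cor:KWithoutInversion}, and the non-nesting and elementary-interval-stabilizer clauses are, in the paper too, delegated wholesale to the proof of \cite[Thm.~8]{Swenson00} (your sketches of these are not complete as written --- a nesting element need not be loxodromic, and the endpoints of an interval of $K^o$ are equivalence classes of $M$, not points of $M$, so ``the stabilizer fixes a point or a pair of points of $M$'' does not follow --- but since you explicitly defer to Swenson this matches the paper's level of detail). Your nontriviality argument is genuinely different from the paper's, which uses density of loxodromic fixed-point pairs \cite[Thm.~2R]{Tukia94} together with \cite[Thm.~3]{Levitt98} to exclude a fixed point even in the completed tree $\widehat{K}$; your more elementary route is fine once the parenthetical is discarded: the assertion that a convergence group fixing a point of $M$ forces $M$ to have at most two points is false in general, but it is also unnecessary, since a $\Gamma$--fixed point of $K$ is a $\Gamma$--invariant class whose closure is a nonempty, proper (it lies to one side of any $c\in C$), closed invariant subset of $M$, contradicting minimality directly.

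The genuine gap is in the ``relative to $\P$'' clause. You apply Lemma~\ref{lem:ParabolicInterior} with ambient pretree $K$ and invariant full subpretree $K^o$, but that lemma requires the action of $P$ on the \emph{ambient} pretree to be non-nesting, and all that is available (from Swenson's argument, and all that the proposition asserts) is non-nesting on the interior $K^o$, equivalently on $R=\widehat{K}$ minus its terminal points. The fixed point of $P$ in $K$ is the class of the $P$--fixed point $\xi\in M$, which will typically be a terminal point of $K$, and nothing established forbids a parabolic element from nesting intervals having $[\xi]$ as an endpoint (this is precisely why the statement restricts non-nesting to $K^o$); so the hypothesis of Lemma~\ref{lem:ParabolicInterior} is unavailable and the purely pretree-theoretic transfer of the fixed point into $K^o$ does not go through. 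The paper instead argues dynamically on $R$: if $P$ had no fixed point in $R$, then \cite[Thm.~3]{Levitt98} --- and this, not Lemma~\ref{lem:ParabolicInterior}, is where finite generation of $P$ is used --- produces $p\in P$ acting hyperbolically on $R$ with an axis whose two endpoint classes have disjoint closures in $M$, giving two disjoint nonempty closed $p$--invariant sets; since $p$ is a parabolic element of the convergence group, its unique fixed point must lie in every such set, a contradiction. Some argument of this dynamical kind at the terminal points is needed and is missing from your proposal.
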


The proof of this result is given below. We note that some parts of the proof follow from arguments given in the proof of \cite[Thm.~8]{Swenson00}.
Before explaining the proof, we state a result of Levitt that will be used in the proof.

\begin{theorem}[\cite{Levitt98}, Thm.~3]
\label{thm:LevittThm3}
Let $T$ be a topological $\R$--tree with a non-nesting action of a group $G$.
   \begin{enumerate}
   \item
   \label{item:Levitt:Axis}
   If $g \in G$ acts on $T$ with no fixed point, then $g$ acts \emph{hyperbolically}, in the following sense.
   There is a proper embedding $\R \to T$ whose image $A$ is $g$--invariant and the action of $g$ on $A$ is topologically conjugate to a translation of\/ $\R$.  Such a set $A$ is an \emph{axis} for $g$.
   \item If $G$ is finitely generated and acts on $T$ with no global fixed point, then $G$ contains a hyperbolic element.
   \end{enumerate}
\end{theorem}

\begin{proof}[Proof of Proposition~\ref{prop:CutPointPretree}]
Let $\widehat{K}$ be the cut point tree of $M$ dual to $C$.
Let
\[
   R = \widehat{K} \setminus \{\text{terminal points} \}.
\]
The proof of \cite[Thm.~8]{Swenson00} involves several steps, one of which establishes the following. 
For any $\Gamma$--invariant subtree $S$ of $\widehat{K}$, let $S^o$ be the subtree of $S$ obtained by removing all terminal points.  Then the action of $\Gamma$ on $S^o$ is non-nesting and the stabilizer of each interval of $S^o$ is an elementary subgroup of the convergence action of $\Gamma$ on $M$.  In this portion of the proof, the hypothesis regarding infinite torsion subgroups is never used.  

Applying these conclusions to $\widehat{K}$ itself, we conclude that the action on the interior $K^o$ of the pretree $K$ is nonnesting and each arc stabilizer in $K^o$ is elementary. 
The action of $\Gamma$ on $K$ is without inversions by Corollary~\ref{cor:KWithoutInversion}.

Next we show that the action of $\Gamma$ on ${K}^o$ is relative to $\mathbb{P}$. Since the action on ${K}^o$ is without inversions, it suffices to show that the action on $R$ is relative to $\mathbb{P}$. Indeed, the action on the pretree extends to the intervals $I(x,y)$ such that if any point of $I(x,y)$ is fixed by a subgroup $P$ then $P$ fixes the closed interval $[x,y]$ pointwise; in particular it fixes $x$ and $y$ pointwise, which are members of the pretree.
It is clear that each $P \in \mathbb{P}$ has a fixed point in $\widehat{K}$. To complete the proof, we need to find such a fixed point that lies in the subtree $R$.

It is shown in \cite[Thm.~14]{PapasogluSwenson06} that the cut point tree $\widehat{K}$ has a canonical topology as a topological $\R$--tree.
Each $P \in \mathbb{P}$ is finitely generated with a non-nesting action on the topological $\R$--tree $R$.  If no point of $R$ is fixed by $P$, then by Theorem~\ref{thm:LevittThm3} some infinite order element $p\in P$ acts hyperbolically on ${R}$ with axis $L$; \emph{i.e.}, $p$ fixes no point in $R$, and there exists a proper embedding of the real line in $R$ whose image $L$ is $p$--invariant and the action of $p$ on $L$ is topologically conjugate to a translation.
The closure of $L$ in $\widehat{K}$ is an interval $[L_{-\infty}, L_{\infty}]$ whose endpoints are distinct ${K}$--equivalence classes of $M$ stabilized by $p$.  Neither endpoint is a cut point, since they are terminal in $\widehat{K}$, so there is a cut point $c \in (L_{-\infty},L_{\infty})$.
As $p(c)$ is another such cut point distinct from $c$, the sets $L_{-\infty}$ and $L_{\infty}$ have disjoint closures.
But $p$ is a parabolic element of the convergence group, so it has a unique fixed point in $M$, which is contained in any closed $p$--invariant subset \cite[\S 2]{Tukia94}, a contradiction.  We conclude that $P$ has a fixed point in $R$. 

To see that $\Gamma$ has no fixed point in $\widehat{K}$, we use the hypothesis that $\Gamma$ acts on $M$ minimally.
For each pair of open sets $U,V \subset M$ separated by a cut point $c \in C$, the density of loxodromic fixed point pairs \cite[Thm.~2R]{Tukia94} implies that there exists a loxodromic element $h \in \Gamma$ with one fixed point in $U$ and one in $V$.
Since its fixed points are separated by $c$, such an element has no fixed point in $R$ by Lemma~\ref{lem:SwensonLoxodromic}.
Thus $h$ acts hyperbolically on $R$ with an axis by Theorem~\ref{thm:LevittThm3}(\ref{item:Levitt:Axis}).
Since the orbit of $c$ is dense in $M$, there exists a pair of such loxodromic elements whose axes in $R$ do not share a common endpoint.
It follows that $\Gamma$ has no fixed point in $\widehat{K}$.
\end{proof}

The cut point tree $\widehat{K}$ has several useful dynamical properties.  However, in the next section we discuss a construction that leads to a dendrite $D$ with much stronger properties.  For instance, the action on $\widehat{K}$ has elementary interval stabilizers, but the action on $D$ has finite interval stabilizers (see Proposition~\ref{prop:NoSplitCutPoint}).  Lemma~\ref{lem: twoendedfix} discusses a critical collapsing property of $D$ that is not easily established for $\widehat{K}$.
Moreover, $D$ is a topological quotient of $M$, which is not the case for $\widehat{K}$.

\section{A dendrite quotient of the boundary}
\label{sec:  minimalcodense}

In \cite{Bowditch99_Treelike}, Bowditch constructs a natural dendrite quotient of any continuum dual to a chosen family of cut points.  If $M$ admits a convergence group action by a one-ended group $\Gamma$ and the chosen cut points are nonparabolic, then Bowditch shows that the dendrite quotient is nontrivial.
The main goal of this section is to prove Proposition~\ref{prop: reldendrite}, which extends Bowditch's result to the more general setting in which the group $\Gamma$ acts relative to $\P$ and does not split relative to $\mathbb{P}$ over a finite or parabolic subgroup.

\begin{definition}[Dendrite quotient]
\label{def:DendriteQuotient}
Let $M$ be a continuum and $C$ a set of cut points of $M$. For each pair $x,y \in M$, the set of cut points of $M$ separating $x$ from $y$ has a natural linear order.
The \emph{dendrite quotient of $M$ dual to $C$}, denoted $M / {\approx}$ or $D(M,C)$, is formed from $M$ by identifying two points $x,y$ if the collection of points of $C$ separating them does not contain a subset order-isomorphic to the rational numbers.
When endowed with the quotient topology, $D(M,C)$ is a dendrite, according to the main theorem of \cite{Bowditch99_Treelike}.
\end{definition}

In order to better understand the dendrite quotient, we examine the following transfinite sequence of quotient pretrees.

\begin{defn}[A transfinite sequence of quotients]
\label{def:Transfinite}
Let $M$ be a continuum with a set of cut points $C$. 
Let $K$ denote the cut point pretree of $M$ dual to $C$.
We recursively define a full equivalence relation $\sim_\alpha$ on $K$ with quotient $\pi_\alpha \colon K \to K_\alpha$ for each ordinal $\alpha$.
The pretree $K_0$ is equal to $K$ itself.
If $K_\alpha$ has been defined for some ordinal $\alpha$, we define $K_{\alpha+1}$ by identifying each pair of points $x,y\in K$ such that the interval between $\pi_\alpha(x)$ and $\pi_\alpha(y)$ in $K_\alpha$ is finite.
If $\alpha$ is a limit ordinal, we define $K_\alpha$ by identifying a pair $x,y \in K$ if $\pi_\beta(x)=\pi_\beta(y)$ for some $\beta<\alpha$.
\end{defn}

Bowditch shows that the recursive procedure described above eventually stabilizes, and the stable quotient may be described precisely, as follows.

\begin{lemma}[\cite{Bowditch99_Treelike}, Lem. 4.3 and~4.4]
\label{lem:BowditchLemma}
For any pretree $K$, the transfinite sequence of quotients defined as above eventually stabilizes.
In particular, there exists an ordinal $\alpha_0$ such that the natural map $K_{\alpha_0} \to K_{\alpha_0 + 1}$ is a bijection.
Furthermore, the stable quotient $K_{\alpha_0}$ is equal to the quotient $K/{\approx}$ such that $x\approx y$ if the pretree interval $(x,y)$ in $K$ does not contain a subset order-isomorphic to the rational numbers.
\end{lemma}

\begin{proposition}
\label{prop:DendritePretree}
Let $M$ be a continuum, let $C$ be a set of cut points of $M$, and let $K$ be the cut point pretree.
The quotient map $M \to K$ induces a bijection from the dendrite $D(M,C) = M/{\approx}$ to the pretree $K_{\alpha_0}$ for some ordinal $\alpha_0$.
This bijection is also a pretree automorphism.
\end{proposition}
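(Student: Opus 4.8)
The plan is to recognize the transfinite sequence $(K_\alpha)$ of Definition~\ref{def:Transfinite} as the iterated Hausdorff condensation performed inside the intervals of $K$, and to recognize the dendrite quotient $D(M,C)$ as the resulting ``scattered'' quotient of $K$.

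I would first dispose of the preliminaries. The relations $\sim_\alpha$ form a $\subseteq$-increasing transfinite chain of \emph{full} equivalence relations on the set $K$: each $\sim_{\alpha+1}$ is the pullback along $\pi_\alpha$ of the ``finite interval'' relation on $K_\alpha$, and that relation is full and pulls back to a full relation because $\pi_\alpha$ is a quotient of pretrees by a full relation (Section~\ref{sec:Pretrees}); at limit stages one takes unions. A cardinality argument then shows the chain stabilizes at some ordinal $\alpha_0$, and fullness of each $\sim_\alpha$ guarantees (again by Section~\ref{sec:Pretrees}) that each $K_\alpha$, and in particular $K_{\alpha_0}$, is a complete median pretree. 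Next, if $a,b \in M\setminus C$ with $(a,b)=\emptyset$, then no point of $C$ separates $a$ from $b$, so $a \approx b$; thus $\approx$ is coarser than the equivalence relation defining $K$ in Definition~\ref{def:CutpointPretree}, so $\approx$ descends to an equivalence relation on $K$ and $D(M,C)$ is the induced quotient of $K$. It therefore suffices to prove that $\approx$ and $\sim_{\alpha_0}$ agree as relations on $K$ and induce the same betweenness on the common quotient.

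For the heart of the argument I would fix $x,y \in K$ and show the equivalence of: (i) $x \sim_{\alpha_0} y$; (ii) the closed interval $[x,y]_K$ is a scattered linear order; (iii) the linearly ordered set $C_{x,y}$ of points of $C$ separating representatives of $x$ and $y$ is scattered; (iv) $x \approx y$. Here (iii)$\Leftrightarrow$(iv) is immediate from Definition~\ref{def:DendriteQuotient}, since a linear order is non-scattered exactly when it contains a copy of $\mathbb{Q}$. For (i)$\Leftrightarrow$(ii): because $[x,y]_K$ is convex, each step $K_\alpha \to K_{\alpha+1}$ restricts on the image of $[x,y]_K$ to precisely the Hausdorff condensation that identifies two points when the order interval between them is finite; hence the image of $[x,y]_K$ in $K_\alpha$ is the $\alpha$-th iterated Hausdorff condensation of $[x,y]_K$, and $\pi_{\alpha_0}(x)=\pi_{\alpha_0}(y)$ iff some such iterate is a single point, which by Hausdorff's classical characterization of scattered linear orders (or a direct transfinite induction) holds iff $[x,y]_K$ is scattered. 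For (ii)$\Leftrightarrow$(iii): the singletons $\{c\}$, $c\in C$, lying in $[x,y]_K$ form a suborder order-isomorphic to $C_{x,y}$ by the betweenness of Definition~\ref{def:CutpointPretree}; and using Lemma~\ref{lem:KAdjacent}(2) and the fourth pretree axiom one checks that between any two points of $[x,y]_K$ having at least two points of $[x,y]_K$ strictly between them there lies a cut-point class, which is enough to conclude that $[x,y]_K$ embeds $\mathbb{Q}$ iff $C_{x,y}$ does. Combining (i)--(iv) gives $\sim_{\alpha_0}=\approx$ on $K$, so $M\to K$ induces a bijection $D(M,C)\to K_{\alpha_0}$.

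Finally I would check that the two pretree structures on this common quotient set coincide, so that the bijection is a pretree isomorphism (equivalently, the identity map of the common quotient is a pretree automorphism): in $K_{\alpha_0}$ a class lies between two others exactly when some separating cut-point class surviving to stage $\alpha_0$ lies between them in $K$, and in $D(M,C)$ the arc betweenness is detected in the same way by the surviving separating cut points; moreover the construction is natural, so the bijection is compatible with the automorphisms induced by any homeomorphism of $M$ preserving $C$. The main obstacle will be step (ii)$\Leftrightarrow$(iii): one must show that the non-cut-point classes occurring inside intervals of $K$ --- which by Lemma~\ref{lem:KAdjacent} can fail to be order-dense among cut-point classes only in the controlled ``cut--noncut adjacency'' pattern --- do not affect whether an interval is scattered, and one must carefully match the transfinite pretree condensation of Definition~\ref{def:Transfinite} with the classical Hausdorff condensation so that Hausdorff's theorem applies. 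Everything else is bookkeeping.
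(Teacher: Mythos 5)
Your proposal is correct, and its skeleton is the same as the paper's: stabilize the transfinite sequence by a cardinality count, identify the stable quotient $K_{\alpha_0}$ with the quotient of $K$ by the relation ``the interval between the two points contains no subset order-isomorphic to $\mathbb{Q}$,'' observe that an interval of $K$ contains a copy of $\mathbb{Q}$ exactly when its cut-point singletons do (because distinct non-cut classes are by construction separated by a point of $C$), and then pull the relation back to $M$ to match Definition~\ref{def:DendriteQuotient}. The genuine difference lies in the middle step: the paper disposes of it in one line by citing \cite[Lem.~4.4]{Bowditch99_Treelike}, whereas you re-derive it by restricting the transfinite condensation of Definition~\ref{def:Transfinite} to intervals and invoking the classical fact that the iterated finite (Hausdorff) condensation of a linear order collapses to a point exactly when the order is scattered. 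This buys self-containedness at the cost of the two technical points you yourself flag: (a) a quotient of a complete median pretree by a full relation maps intervals onto intervals, so that the restriction of $\sim_{\alpha+1}$ to $[x,y]_K$ really is the intrinsic finite condensation of that linearly ordered set --- this does go through, since in a median pretree a point $w\in(x,y)$ lying outside the full classes $X\ni x$ and $Y\ni y$ itself witnesses betweenness for every pair $x'\in X$, $y'\in Y$ (use fullness of $X$, $Y$ and the median of $x,x',w$); and (b) extracting a copy of $\mathbb{Q}$ made of cut-point classes from a copy of $\mathbb{Q}$ in $[x,y]_K$ needs a short back-and-forth, and the relevant input is the definition of the $K$-classes in Definition~\ref{def:CutpointPretree} rather than Lemma~\ref{lem:KAdjacent}(2); the paper is equally terse at this point. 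Your remaining observations (the equivalence (iii)$\Leftrightarrow$(iv), well-definedness of $\approx$ on $K$, and agreement of the two betweenness relations on the common quotient) coincide with what the paper does, mostly implicitly.
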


\begin{proof}
According to Lemma~\ref{lem:BowditchLemma}, the stable quotient $K_{\alpha_0}$ is equal to the quotient $K/{\approx}$.
By Definition~\ref{def:CutpointPretree}, the members of $K$ have two types, singleton sets $\{c\}$ with $c \in C$ and equivalence classes of points of $M$ not contained in $C$ under the relation $a\mathcal{R} b$ if $a$ and $b$ are separated in $M$ by no point of $C$.
Any two equivalence classes of the second type are separated by a cut point from $C$, by definition.
If an interval of $K$ contains a subset $R$ order-isomorphic to the rationals, then $R\cap C$ is also isomorphic to the rationals.
Thus, $x \approx y$ holds if and only if the members of $C$ between $x$ and $y$ do not contain a subset order isomorphic to the rational numbers.

The relation $\approx$ on $K$ pulls back to the relation $\approx$ on $M$ such that $x\approx y$ if the set of points of $C$ separating $x$ from $y$ does not contain a subset order isomorphic to the rationals.  Therefore, the map $M\to K$ induces a pretree automorphism from the dendrite $D(M,C)$ to the stable pretree $K_{\alpha_0}$. 
\end{proof}

The two constructions of the dendrite $D(M,C)$ described above may seem a bit arbitrary.  But in fact, the dendrite quotient operation is natural from a categorical point of view, as described in the following proposition.
This proposition is not used in any of our main arguments, and it may safely be skipped.

\begin{proposition}
\label{prop:MaximalHausdorff}
Let $D(M,C)$ be the dendrite quotient of $M$ dual to $C$.
Let $\mathcal{R}$ be the relation on $M$ such that $x\mathcal{R} y$ holds if $x$ and $y$ are points of $M\setminus C$ that are not separated by any point of $C$.
If $M/\mathcal{R}$ is endowed with the quotient topology, then $D(M,C)$ is equal to the maximal Hausdorff quotient of $M/\mathcal{R}$.
\end{proposition}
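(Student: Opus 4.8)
The plan is to verify directly the universal property characterizing the maximal Hausdorff quotient. Call the nonsingleton classes of $\mathcal R$ the \emph{chunks} of $M$, write $\nu\colon M\to M/\mathcal R$ for the quotient map, and let $\phi\colon M\to D(M,C)$ be the dendrite quotient map. Two points of $M\setminus C$ separated by no member of $C$ are certainly identified in $D(M,C)$, so $\mathcal R\subseteq{\approx}$ and $\phi$ factors as $M\xrightarrow{\nu}M/\mathcal R\xrightarrow{q}D(M,C)$. Since $M$ is compact and $D(M,C)$ is Hausdorff, $\phi$ is a quotient map, hence so is $q$; thus $D(M,C)$ is a Hausdorff quotient of the compact space $M/\mathcal R$. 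It therefore suffices to prove the following statement $(\star)$: \emph{if $Z$ is Hausdorff and $f\colon M\to Z$ is continuous and constant on every chunk, then $f$ is constant on every fiber of $\phi$, i.e.\ on every $\approx$--class}. Indeed, $(\star)$ says exactly that every continuous map $M/\mathcal R\to Z$ into a Hausdorff space factors through $q$, so $D(M,C)$ is the maximal Hausdorff quotient. (Equivalently, since $M/\mathcal R$ is compact its maximal Hausdorff quotient $H$ is compact Hausdorff and $q$ factors as $M/\mathcal R\to H\to D(M,C)$ through a continuous surjection of compacta, which is a homeomorphism as soon as it is injective; injectivity is again $(\star)$.)

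To prove $(\star)$, set $E_f=(f\times f)^{-1}(\Delta_Z)$, a closed equivalence relation on $M$ containing $\mathcal R$. Using the transfinite sequence of quotient pretrees $K=K_0,K_1,\dots$ from Definition~\ref{def:Transfinite}, together with Proposition~\ref{prop:DendritePretree} (which, after identifying $K$ with $M/\mathcal R$ as sets, identifies $\approx$ with the pullback of $\sim_{\alpha_0}$ to $M$), I would show by transfinite induction that $\sim_\alpha\subseteq E_f$ for every $\alpha$; the case $\alpha=\alpha_0$ is $(\star)$. The cases $\alpha=0$ (where $\sim_0=\mathcal R$) and $\alpha$ a limit are immediate. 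For the successor step assume $\sim_\alpha\subseteq E_f$; then $f$ is constant on the fibers of the quotient map $M\to K_\alpha$, so it descends to a continuous map $\bar f\colon K_\alpha\to Z$. Since $K_{\alpha+1}$ is obtained by identifying pairs whose interval in $K_\alpha$ is finite, $\sim_{\alpha+1}\subseteq E_f$ will follow once we know $\bar f(u)=\bar f(v)$ whenever $(u,v)$ is finite in $K_\alpha$; choosing a point of $(u,v)$ and inducting on $\lvert(u,v)\rvert$ (using that $K_\alpha$ is a median pretree), this reduces to the case that $u\ne v$ are \emph{adjacent} in $K_\alpha$.

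So the crux is: \emph{if $u\ne v$ are adjacent in $K_\alpha$, then the preimages $U,V\subseteq M$ of $u$ and $v$ under $M\to K_\alpha$ satisfy $\overline U\cap\overline V\ne\emptyset$}. Granting this, choose $z\in\overline U\cap\overline V$; by the inductive hypothesis $f$ is constant on $U$ and on $V$, and continuity together with $Z$ Hausdorff forces $f(z)$ to equal both constant values, so $\bar f(u)=\bar f(v)$, finishing the induction and hence the proof. To establish the intersection property, note that the preimages of $u$ and $v$ in the cut point pretree $K$ are disjoint full subpretrees $\widetilde u,\widetilde v$ whose union is full, so by Lemma~\ref{lem:Connector} one of the ordered pairs $(\widetilde u,\widetilde v)$, $(\widetilde v,\widetilde u)$ has a connector point $p$, say $p\in\widetilde u$. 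One then analyzes the type of the $K$--point $p$ using Lemma~\ref{lem:KAdjacent} (adjacency in $K$ always pairs a singleton cut point $\{c\}$ with a chunk whose closure in $M$ contains $c$): if $p=\{c\}$ one shows, using that adjacency of $u,v$ in $K_\alpha$ forbids any third $\sim_\alpha$--class from lying between them, that the chunks and cut points making up $\widetilde v$ accumulate onto $c$ in $M$, so that $c\in\overline U\cap\overline V$; if $p$ is a chunk one argues similarly with the point of $\overline p$ facing $\widetilde v$, which is the limit in $M$ of the material of $\widetilde v$ on that side and therefore lies in $\overline U\cap\overline V$ even though it need not itself be a cut point.

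The step translating the purely combinatorial adjacency relation in the quotient pretree $K_\alpha$ into the topological statement $\overline U\cap\overline V\ne\emptyset$ in $M$ is the part I expect to be the main obstacle: the ``interface'' of $\widetilde u$ and $\widetilde v$ in $K$ may be a limit configuration with no adjacent cut point, so the naive hope of finding a separating cut point in the common closure fails, and one must instead locate a suitable limit point while keeping careful track of how the full quotient $K\to K_\alpha$ has already merged the locally finite structure. Everything else in the argument is formal.
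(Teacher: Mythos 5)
Your setup is sound and in fact mirrors the paper's own strategy: reduce to the universal property, run a transfinite induction over the quotients $K_\alpha$ of Definition~\ref{def:Transfinite} using Proposition~\ref{prop:DendritePretree}, reduce the successor step to adjacent classes, and bring in the connector point from Lemma~\ref{lem:Connector}. But the argument has a genuine gap exactly where you flag "the main obstacle": the claim that for adjacent classes $u\ne v$ in $K_\alpha$ the preimages $U,V\subseteq M$ satisfy $\overline U\cap\overline V\ne\emptyset$ is never proved. The phrases "one shows that the chunks and cut points making up $\widetilde v$ accumulate onto $c$" and "one argues similarly with the point of $\overline p$ facing $\widetilde v$" are restatements of the claim, not arguments; in particular, in the chunk case there is no a priori single "point of $\overline p$ facing $\widetilde v$" to point at --- identifying such a limit point is precisely what has to be done. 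Since everything else in your proposal is, as you say, formal, the unproved claim is the entire mathematical content of the proposition.

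For comparison, the paper's proof of this step is continuum-theoretic. With $p$ the connector point of the pair (so $[x,p]$ meets the connector's class only in $p$), fix $x_0$ in the opposite class and $p_0\in p$, let $\hat C$ be the cut points separating $x_0$ from $p_0$ (these all lie in the opposite class because the union of the two classes is full), and take an irreducible subcontinuum $I$ of $M$ from $x_0$ to $p_0$. For each $c\in\hat C$ let $I_c$ be $c$ together with the component of $I\setminus\{c\}$ containing $p_0$; these are nested, and their intersection $I_\omega$ misses $\hat C$. Compactness of $M$ forces $\overline{\hat C}$ to meet $I_\omega$, and Lemmas \ref{lem:KAdjacent} and \ref{lem:UniqueConnector} show $I_\omega\subseteq\overline p$ (two cases, according to whether $p_0\in C$; the case $p_0\in C$ uses that no point of the opposite class is adjacent to $p$, which is where $\alpha\ge 1$ and the uniqueness of the connector enter). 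This yields $\overline U\cap\overline V\ne\emptyset$, which is the statement you left open. Note also that your induction should treat $\alpha=1$ separately (as the paper does, via Lemma~\ref{lem:KAdjacent}(2): for adjacent points of $K$ itself one class lies in the closure of the other), since Lemma~\ref{lem:UniqueConnector} is only available for $\alpha\ge 1$. Until the accumulation claim is supplied by an argument of this kind, the proposal is an outline rather than a proof.
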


The \emph{maximal Hausdorff quotient} of a space $X$ is the universal Hausdorff quotient $HX$ of $X$ such that every map from $X$ to a Hausdorff space factors through $HX$ (see \cite[Prop.~V.9.2]{MacLane_Categories} or \cite[Lem.~3.1]{McDuff06}).
The proof of Proposition~\ref{prop:MaximalHausdorff} uses the notion of connector points and the following lemma, which will be used several more times in our main results.

\begin{lemma}
\label{lem:UniqueConnector}
Let $X$ and $Y$ be adjacent points of $K_\alpha$ for $\alpha\ge 1$.
Then one of the pairs $(X,Y)$ and $(Y,X)$ has a unique connector point and the other pair has no connector point.
\end{lemma}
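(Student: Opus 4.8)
The plan is to derive the statement from Bowditch's connector-point dichotomy (Lemma~\ref{lem:Connector}), applied inside the complete median pretree $K$ with $X$ and $Y$ regarded as subsets of $K$, together with the one feature of the first quotient that distinguishes it from $K_0$: any two adjacent points of $K$ are already identified in $K_1$. Throughout, $X$ and $Y$ are equivalence classes of the full relation $\sim_\alpha$, hence disjoint, nonempty, full subsets of $K$, and $K$ is a complete median pretree by Definition~\ref{def:CutpointPretree}.

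The first and main step is to check that $X\cup Y$ is full in $K$. Since $X$ and $Y$ are separately full, the only case is an interval $[x,y]$ with $x\in X$, $y\in Y$; suppose for contradiction that some $z\in(x,y)$ has $z\notin X\cup Y$. Applying pretree axiom (4) to the relation $xzy$ with an arbitrary $x'\in X$ in place of $w$, the alternative $xzx'$ is impossible since it would put $z\in[x,x']\subseteq X$; hence $x'zy$. Applying axiom (4) again to $x'zy$ with an arbitrary $y'\in Y$, the alternative $y'zy$ is impossible since it would put $z\in Y$; hence $x'zy'$. Thus the single representative $z$ lies between every pair $x'\in X$, $y'\in Y$, so $[z]$ is strictly between $X$ and $Y$ in $K_\alpha$, contradicting adjacency. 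Therefore $X\cup Y$ is full.

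Now Lemma~\ref{lem:Connector} applies and gives that at least one of $(X,Y)$, $(Y,X)$ has a connector point, and any connector point is unique; it remains only to rule out that \emph{both} pairs have one. If $p=p(Y,X)\in Y$ and $q=p(X,Y)\in X$ both exist, then taking $x=q$ in the defining condition for $p$ and $y=p$ in the defining condition for $q$ yields $[p,q]\cap Y=\{p\}$ and $[p,q]\cap X=\{q\}$, so $[p,q]\cap(X\cup Y)=\{p,q\}$; by fullness of $X\cup Y$ this forces $[p,q]=\{p,q\}$, i.e.\ $p$ and $q$ are adjacent in $K$ (note $p\ne q$ since $X\cap Y=\emptyset$). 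Then the open interval between $\pi_0(p)=p$ and $\pi_0(q)=q$ in $K_0=K$ is empty, hence finite, so $p\sim_1 q$ by Definition~\ref{def:Transfinite}. A routine transfinite induction on the recursive definition shows $\sim_1\subseteq\sim_\alpha$ for every $\alpha\ge 1$, whence $\pi_\alpha(p)=\pi_\alpha(q)$, that is $Y=X$ — contradicting adjacency of $X$ and $Y$. Hence exactly one pair has a connector point, and it is unique.

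The step I expect to be the real obstacle is the fullness of $X\cup Y$: this is the only place where adjacency in the quotient $K_\alpha$ must be translated into a statement about $K$, and it hinges on getting the quotient betweenness right (that $[z]$ lies between $X$ and $Y$ requires a single representative of $[z]$ that is between \emph{all} pairs from $X$ and $Y$, which is precisely what the two successive applications of axiom (4) deliver). Once fullness is in hand, the rest is a short formal deduction from Lemma~\ref{lem:Connector} and the definition of $K_1$.
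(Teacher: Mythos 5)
Your proof is correct and follows essentially the same route as the paper: regard $X$ and $Y$ as disjoint, full subsets of the complete median pretree $K$ with $X\cup Y$ full, invoke Lemma~\ref{lem:Connector} for existence and uniqueness, and rule out two connectors by noting they would be adjacent in $K$, hence $\sim_1$--equivalent and therefore $\sim_\alpha$--equivalent, contradicting $X\ne Y$. The only difference is that you spell out the fullness of $X\cup Y$ and the adjacency of the two putative connectors, which the paper asserts without detail.
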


\begin{proof}
Recall that $K$ is complete and median by Definition~\ref{def:CutpointPretree}.
When considered as subsets of $K$, the sets $X$ and $Y$ are nonempty, disjoint,  and full with $X \cup Y$ also full. 
Under those circumstances, Lemma~\ref{lem:Connector} shows that at least one of the pairs has a unique connector point.

If the pairs $(X,Y)$ and $(Y,X)$ both had connector points, then the two connectors $p$ and $p'$ would be adjacent in ${K}$ and in the same $\sim_1$--class, contradicting the fact that $p$ and $p'$ are not $\sim_\alpha$--equivalent.
\end{proof}

\begin{proof}[Proof of Proposition~\ref{prop:MaximalHausdorff}]
As the dendrite $D(M,C)$ is Hausdorff and equal to $K_{\alpha_0}$ for some ordinal $\alpha_0$, it suffices to show that any continuous map from $M/\mathcal{R}$ to a Hausdorff space factors through the quotient $M/\mathcal{R} = K \to K_\alpha$.

We proceed by induction on $\alpha$.
The claim is trivial for $\alpha=0$ since $K=K_0$.
Recall that $K_1$ is formed from $K$ by identifying adjacent pairs of points $X,Y$ in $K$.  If $X$ and $Y$ are adjacent in $K$, then---considered as subsets of $M$---one is contained in the closure of the other by Lemma~\ref{lem:KAdjacent}(\ref{item:KAdjacent:Closure}). Thus $X$ and $Y$ have the same image in any Hausdorff space, and any map from $M/\mathcal{R}$ to a Hausdorff space factors through $K_1$.
The inductive claim is immediate for limit ordinals, so we only need to consider the successor case. 

Suppose any map from $M/\mathcal{R}$ to a Hausdorff space factors through $K_\beta$ for some $\beta\ge 1$, and let $\alpha=\beta+1$.  Then $K_\alpha$ is formed from $K_\beta$ by identifying all adjacent pairs of points in $K_\beta$.
Let $X$ and $Y$ be adjacent elements of $K_\beta$, which we consider as full disjoint subsets of $K$ whose union is full.
Without loss of generality, assume that the connector point $p = p(Y,X)$ exists.
Select any $x \in X$, and let $\hat{C}$ be the set of all $c \in C$ separating $x$ from $p$.
Then $\{c\} \in X$ for each $c\in \hat{C}$, since $[x,p]\cap Y = \{p\}$.
Furthermore, no point of $X$ is adjacent to $p$ by Lemma~\ref{lem:UniqueConnector}.

We claim that $\hat{C}$ has a limit point in common with the set $p$.
Consider the points $x,p\in K$ as subsets of $M$, and choose $x_0\in x$ and $p_0\in p$.
Let $I$ be an irreducible subcontinuum of $M$ from $x_0$ to $p_0$,
\emph{i.e}, a subcontinuum of $M$ containing $x_0$ and $p_0$ such that no proper subcontinuum of $I$ contains both $x_0$ and $p_0$.
By irreducibility, the points of $\hat{C}$ are precisely the members of $C$ that separate distinct points of $I$.
For each $c \in \hat{C}$, let $I_c$ be the subcontinuum of $I$ consisting of $c$ together with the component of $I\setminus\{c\}$ containing $p_0$.
Then $I_{c'} \subseteq I_c\setminus \{c\}$, whenever $c' \in \hat{C}$ lies between $c$ and $p_0$. 
If we let $I_\omega$ be the intersection of all $I_c$ for $c \in \hat{C}$, then $I_\omega$ has empty intersection with $\hat{C}$ since for each $c \in \hat{C}$ there exists $c'\in \hat{C}$ between $c$ and $p_0$.

By compactness of $M$, any neighborhood $U$ of $I_\omega$ contains $I_c$ for some $c \in\hat{C}$.
Thus the closure of $\hat{C}$ intersects $I_\omega$.
In order to establish the claim,
it suffices to show that $I_\omega$ is contained in the closure of $p$.
If $p_0 \notin C$, the intersection $I_\omega$ is a subcontinuum of $I$ not separated by any member of $C$. 
By Lemma~\ref{lem:KAdjacent}(\ref{item:KAdjacent:Cyclic}), the set $I_\omega \setminus C$ is dense in $I_\omega$ and contained in the $\mathcal{R}$--equivalence class $p$. We conclude that $I_\omega$ is in the closure of $p$.
On the other hand, suppose $p_0 \in C$.
If $I_\omega$ contained more than the one point $p_0$, then $I_\omega\setminus C$ would be a single $\mathcal{R}$--equivalence class contained in $X$ and adjacent to $\{p_0\}$, contradicting Lemma~\ref{lem:UniqueConnector}.
Thus we must have $I_\omega = \{p_0\} = p$.

Since $\hat{C}$ has a limit point in common with $p$, the closures of $X$ and $Y$ intersect in $K$.
In particular, $X$ and $Y$ must have the same image in any Hausdorff space, and any map from $M/\mathcal{R}$ to a Hausdorff space factors through $K_\alpha$.
\end{proof}

Throughout the rest of this section we work under the following assumptions. Let $M$ denote a continuum with a minimal convergence group action by a finitely generated group $\Gamma$ relative to a family of subgroups $\P$.
We assume that $\Gamma$ does not split relative to $\P$ over any finite or parabolic subgroup.
Assume $M$ has a nonparabolic cut point $c$ with orbit $C = \Gamma(c)$.
We let ${K}$ be the cut point pretree dual to $C$, and let $(K_\alpha)$ be the transfinite sequence of quotients of $K$ given by Definition~\ref{def:Transfinite}.

\begin{lemma}
\label{lem: relative in quotient}
For each ordinal $\alpha$, the action of\/ $\Gamma$ on the interior of $K_\alpha$ is non-nesting, without inversions, and relative to $\P$.
\end{lemma}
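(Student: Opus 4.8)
The plan is to prove Lemma~\ref{lem: relative in quotient} by transfinite induction on $\alpha$, with the base case $\alpha = 0$ handled by Proposition~\ref{prop:CutPointPretree} and the inductive step handled by the pretree machinery of Section~\ref{sec:Pretrees}. For the base case, Proposition~\ref{prop:CutPointPretree} directly gives that the action of $\Gamma$ on $K^o = K_0^o$ is non-nesting, relative to $\P$, and (via Corollary~\ref{cor:KWithoutInversion}) without inversions; here we use that each $P \in \P$ is finitely generated, which holds since $\Gamma$ is finitely generated and $\P$ is a peripheral structure of a relatively hyperbolic pair.

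For the successor step, suppose the action of $\Gamma$ on the interior of $K_\beta$ has the three properties, and let $\alpha = \beta + 1$. By Definition~\ref{def:Transfinite}, the quotient $K_\alpha$ is obtained from $K_\beta$ by the full equivalence relation identifying points at finite pretree-distance, which is manifestly $\Gamma$--equivariant. First I would recall that $K$, hence each $K_\beta$, is a complete median pretree (Definition~\ref{def:CutpointPretree} together with the fact that median and completeness descend along full quotients). The non-nesting property is the heart of the matter: I would apply Lemma~\ref{lem: non-nesting in quotient} to the complete median pretree $K_\beta$ with its equivariant full relation, obtaining that the induced action on $K_\alpha$ is non-nesting, and then restrict to the interior (non-nesting on a pretree restricts to non-nesting on any invariant full subpretree, in particular on $K_\alpha^o$). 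For the ``without inversions'' property, I would invoke Corollary~\ref{cor:KWithoutInversion}: the $\Gamma$--action on $M$ leaves $C$ invariant, so every element acts on $K$ — and hence on any equivariant quotient — without inversions; alternatively one can note that an inversion on $K_\alpha$ would pull back to an inversion on the cut point tree, which is excluded.

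The remaining point is that the action on $K_\alpha^o$ is relative to $\P$, i.e., each $P \in \P$ fixes a point of $K_\alpha^o$. Here I would use Lemma~\ref{lem: interior parabolic point of pretree}: applied to $K_\beta$ with its full relation, it produces a full subpretree $K'$ of $K_\beta^o$, invariant under all automorphisms of $K_\beta$ (in particular $\Gamma$--invariant), which maps onto the interior of $K_\alpha = K_\beta/{\sim}$. By the inductive hypothesis $P$ fixes a point of $K_\beta^o$, and $K_\beta^o$ is interval complete and median with a non-nesting $P$--action; since $K'$ is a $P$--invariant full subpretree, Lemma~\ref{lem:ParabolicInterior} (using that $P$ is finitely generated) yields a fixed point of $P$ in $K'$. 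Its image in $K_\alpha$ lies in the interior $K_\alpha^o$, as desired.

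The limit case is the easiest: when $\alpha$ is a limit ordinal, $K_\alpha$ is the quotient of $K$ by the full equivalence relation $x \sim_\alpha y$ iff $\pi_\beta(x) = \pi_\beta(y)$ for some $\beta < \alpha$, which is again $\Gamma$--equivariant, so all three properties pass to $K_\alpha$ by the same three tools (Lemma~\ref{lem: non-nesting in quotient}, Corollary~\ref{cor:KWithoutInversion}, and the combination of Lemmas~\ref{lem: interior parabolic point of pretree} and~\ref{lem:ParabolicInterior} applied through the compatible system of quotients). I expect the main obstacle to be the relative-to-$\P$ clause at successor stages: keeping track of which subpretree a fixed point survives into requires threading Lemma~\ref{lem: interior parabolic point of pretree} and Lemma~\ref{lem:ParabolicInterior} together carefully so that a single $P$--fixed point in the interior of $K_\beta$ is propagated to the interior of $K_\alpha$ rather than being collapsed to a terminal point; the non-nesting conclusion, by contrast, is essentially an immediate application of Lemma~\ref{lem: non-nesting in quotient} once completeness and the median property of $K_\beta$ are in hand.
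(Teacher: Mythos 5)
Your proposal has two genuine gaps, both at places where the paper uses a more careful argument than the general principles you invoke. The more serious one is the non-nesting step at successor stages: you apply Lemma~\ref{lem: non-nesting in quotient} to all of $K_\beta$, but that lemma requires the $\Gamma$--action on the \emph{entire} complete median pretree to be non-nesting, whereas your inductive hypothesis (and Proposition~\ref{prop:CutPointPretree} in the base case) gives non-nesting only on the interior $K_\beta^o$. This restriction is not cosmetic: a loxodromic element whose fixed points are separated by a point of $C$ fixes exactly two points of $K$, both terminal (Lemma~\ref{lem:SwensonLoxodromic}), and translates the points between them, so it nests intervals having a terminal endpoint; non-nesting on all of $K$, hence on all of $K_\beta$, is simply not available. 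Saying ``then restrict to the interior'' after quotienting does not repair the missing hypothesis. The paper avoids this by noting that, by Definition~\ref{def:Transfinite}, each $\sim_\alpha$ is already a full equivariant relation on $K$ itself, so no induction is needed: it passes to the invariant full subpretree $K' \subseteq K^o$ furnished by Lemma~\ref{lem: interior parabolic point of pretree}, which maps onto the interior of $K_\alpha$, and applies Lemma~\ref{lem: non-nesting in quotient} there. (Your treatment of the relative-to-$\P$ clause, threading Lemma~\ref{lem: interior parabolic point of pretree} and Lemma~\ref{lem:ParabolicInterior}, is essentially this argument carried out one level up and is fine.)

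The second gap is the without-inversions clause for $\alpha \ge 1$. You assert that absence of inversions on $K$ passes to any equivariant full quotient, or alternatively that an inversion on $K_\alpha$ would pull back to one on the cut point tree. Neither implication holds in general: an element can transpose two classes that are adjacent in $K_\alpha$ without transposing any adjacent pair of $K$, since the classes involved need not contain any adjacent pair at all. The paper rules out inversions on $K_\alpha$ for $\alpha \ge 1$ via Lemma~\ref{lem:UniqueConnector}: for adjacent $X,Y \in K_\alpha$ exactly one of the ordered pairs $(X,Y)$ and $(Y,X)$ admits a connector point, and an element transposing $X$ and $Y$ would carry the existing connector to a connector for the other pair, a contradiction. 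Your argument for this clause needs to be replaced by this connector-asymmetry argument (or an equivalent use of completeness of $K$); as stated it rests on a false general principle.
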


\begin{proof}
By Proposition~\ref{prop:CutPointPretree}, the action of $\Gamma$ on ${K}^o$ is non-nesting and relative to $\P$.
Since $K$ is complete and median, its interior is interval complete and median.
Let $K'$ be the $\Aut(K)$--invariant full subpretree of $K$ given by Lemma~\ref{lem: interior parabolic point of pretree}.
Since each $P \in \P$ has a fixed point $x$ in $K^o$, it also has a fixed point $x'$ in $K'$ by Lemma~\ref{lem:ParabolicInterior}.
Since $\sim_\alpha$ is a full relation on $K$, the image $[x']$ in $K_\alpha$ lies in the interior of $K_\alpha$ and is also fixed by $P$.
Similarly, since the action of $\Gamma$ on $K'$ is non-nesting, the induced action on the interior of $K_\alpha$ is non-nesting by Lemma~\ref{lem: non-nesting in quotient}.
By Corollary~\ref{cor:KWithoutInversion}, the action on $K=K_0$ is without inversions.
Lemma~\ref{lem:UniqueConnector} implies the same conclusion for $\alpha \ge 1$.
\end{proof}

\begin{remark}[Collapsing simplicial trees]
\label{rem:Collapsing}
Note that, in the successor case, $K_{\beta+1}$ may naturally be considered as a quotient of $K_\beta$ by the relation that two points $x,y\in K_\beta$ are equivalent if only finitely many points of $K_\beta$ lie between $x$ and $y$.
Each equivalence class in $K_\beta$ is a discrete, full, subpretree of $K_\beta$, which may be considered as the vertex set of a simplicial tree (see \cite[Lem.~3.34]{Bowditch99_Treelike}).
Two vertices are joined by an edge if and only if they are adjacent in $K_\beta$.
\end{remark}

According to Proposition~\ref{prop:CutPointPretree}, the action of $\Gamma$ on $K_0=K$ has no fixed point.  We now consider $K_1$.
The following lemma depends on our assumption that the family $C$ of cut points of $M$ contains no parabolic points.

\begin{lemma}
\label{lem:K1Nontrivial}
The action of\/ $\Gamma$ on $K_1$ does not have a fixed point.
\end{lemma}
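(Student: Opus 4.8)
The plan is to argue by contradiction: suppose $\Gamma$ fixes a point $X_1 \in K_1$. Since the action on $K_0 = K$ has no fixed point by Proposition~\ref{prop:CutPointPretree}, the $\sim_1$--equivalence class $X_1$, viewed as a full subpretree of $K$, must contain more than one point. By Remark~\ref{rem:Collapsing}, $X_1$ is a discrete full subpretree of $K$, hence the vertex set of a simplicial tree $S$ on which $\Gamma$ acts; distinct vertices of $S$ are joined by an edge exactly when they are adjacent in $K$. Passing to the interior (removing terminal vertices), we still have a $\Gamma$--invariant action on a simplicial tree, and by Lemma~\ref{lem: relative in quotient} this action is non-nesting, without inversions, and relative to $\P$. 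A non-nesting action without inversions on a simplicial tree is a genuine simplicial action, so $\Gamma$ acts on a simplicial tree $S$ relative to $\P$.

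The next step is to identify the edge stabilizers. An edge of $S$ corresponds to a pair of adjacent points $x,y$ in $K$ lying in the common class $X_1$. By Lemma~\ref{lem:KAdjacent}(2), exactly one of $x,y$ is a singleton $\{c\}$ with $c\in C$, and the edge stabilizer fixes $c$. Here I would use the standing hypothesis that $C$ contains no parabolic point: since $c$ is a nonparabolic cut point and the action on $M$ is a convergence action, the stabilizer $\Stab(c)$ cannot contain an infinite-order parabolic element, and it cannot contain an infinite loxodromic subgroup either (a loxodromic element has two fixed points, so it cannot fix the single point $c$). Invoking the structure theory of convergence groups (Tukia \cite{Tukia94}), the subgroup $\Stab(c)$ is therefore finite — more precisely, any infinite subgroup of $\Stab(c)$ would have to be parabolic with unique fixed point $c$, which is excluded. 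Hence every edge stabilizer of $S$ is finite, and in fact torsion (being finite).

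This yields a nontrivial action of $\Gamma$ on the simplicial tree $S$, relative to $\P$, with finite (torsion) edge stabilizers. To reach the contradiction with the hypothesis that $\Gamma$ does not split relative to $\P$ over torsion subgroups, I need the action to be nontrivial, i.e.\ without a global fixed point in $S$. This is precisely the assertion that $X_1$ is not collapsed further — but that is not automatic, so the real content is to rule out $\Gamma$ fixing a vertex or inverting an edge of $S$. Inversions are excluded by Lemma~\ref{lem: relative in quotient}; a global fixed vertex $v \in S$ would be a point of $K$ fixed by all of $\Gamma$, contradicting Proposition~\ref{prop:CutPointPretree} directly. Thus the action on $S$ is nontrivial and $\Gamma$ splits relative to $\P$ over a torsion subgroup, contradicting our standing assumption.

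The main obstacle is the middle step: one must be careful that the "simplicial tree" $S$ obtained from the discrete full subpretree $X_1$ genuinely carries a minimal nontrivial $\Gamma$--action relative to $\P$, rather than collapsing to a point or having only infinite or non-finitely-generated edge stabilizers after passing to a minimal subtree. Controlling the edge stabilizers hinges entirely on the nonparabolicity hypothesis on $C$, so the cleanest route is to first pin down that each edge of $S$ is "anchored" at a singleton $\{c\}\in C$ via Lemma~\ref{lem:KAdjacent}(2), then quote the convergence-group dichotomy to conclude $\Stab(c)$ is finite. If a minimal $\Gamma$--subtree of $S$ happens to be a single point, that point lies in $K$ and is $\Gamma$--fixed, again contradicting Proposition~\ref{prop:CutPointPretree}; so in every case we land on a contradiction.
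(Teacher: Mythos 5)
There is a genuine gap at the step where you identify the edge stabilizers of the tree $S$. You argue that for a nonparabolic cut point $c \in C$ the stabilizer $\Stab(c)$ must be finite, because ``a loxodromic element has two fixed points, so it cannot fix the single point $c$.'' This is not correct: stabilizing $c$ only requires $c$ to be \emph{among} the fixed points of the element, and a loxodromic element one of whose two fixed points is $c$ does stabilize $c$. So for a nonparabolic cut point the stabilizer is either finite \emph{or} a two-ended loxodromic subgroup, which is exactly what the paper records. Consequently the edge stabilizers of $S$ need not be torsion, and since the standing hypothesis only excludes splittings of $\Gamma$ relative to $\P$ over torsion subgroups, your final contradiction does not go through: the nontrivial action on $S$ could a priori be a splitting over loxodromic (two-ended) subgroups, which is not forbidden.

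The paper's proof deals with precisely this case. Using Lemma~\ref{lem:SwensonLoxodromic}, any two loxodromic elements fixing a vertex $v\in V_1(T)$ have a common power fixing the same subset of $K$, so each cut-point vertex is incident to at most one edge with loxodromic stabilizer. Collapsing all such edges produces a tree $\bar T$ whose edge stabilizers are finite, and since each collapse preimage is a subtree of diameter at most two (it contains at most one $V_0$--vertex), a global fixed vertex in $\bar T$ would force one in $T$; only then does the no-splitting hypothesis apply to give the contradiction. A secondary, smaller issue in your write-up: you pass to the interior of the class $X_1$ without checking it is nonempty. The paper first shows $X_1$ must contain at least three points (one point contradicts Proposition~\ref{prop:CutPointPretree}; exactly two adjacent points would be swapped or both fixed, impossible since the action is without inversions and has no fixed point in $K$), and hence contains a median, which is an interior point; some such argument is needed before removing terminal vertices.
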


\begin{proof}
Suppose $\Gamma$ has a fixed point $X$ in $K_1$.
Since $K_1$ is the quotient of $K$ by the finite interval relation, the equivalence class $X$ is a $\Gamma$--invariant discrete, full subpretree of $K$ that may be identified with the set of vertices of a simplicial tree (see Remark~\ref{rem:Collapsing}).
Observe that $X$ must contain at least three distinct points.  Indeed, as $\Gamma$ has no fixed point, $X$ contains more than one point. If $X$ had exactly two points $a$ and $b$, then they would be adjacent, since $X$ is full. But $\Gamma$ acts without inversions, so it would fix each of $a$ and $b$, which is impossible.
Since $X$ has three distinct points, $a$, $b$, and $c$, it also contains their median, which lies in the interior $K^o$ of $K$.

We now consider the nonempty subpretree $X^o$ obtained from $X$ by removing all terminal points, and we consider $X^o$ to be the set of vertices of a simplicial tree $T$. The action of $\Gamma$ on $K^o$ is non-nesting, relative to $\P$, and without inversions by Proposition~\ref{prop:CutPointPretree}.  Thus, the action of $\Gamma$ on $T$ is relative to $\P$ by Lemma~\ref{lem:ParabolicInterior} and is also clearly without inversions.

By the construction of $K$, the tree $T$ is bipartite with bipartition $V(T) = V_0(T) \sqcup V_1(T)$.
The set $V_0(T)$ consists of members of $K$ that are not contained in $C$, and the set $V_1(T)$ consists of singleton sets $\{c\}$ with $c \in C$.
Since the points of $C$ are nonparabolic in the convergence group action on $M$, the stabilizer of a vertex $v\in V_1(T)$ is either finite or loxodromic in the convergence group $\Gamma$.

We claim that a vertex $v\in V_1(T)$ is incident to at most one edge of $T$ with loxodromic stabilizer. Suppose, by way of contradiction, that edges $e,e'$ are adjacent to $v$ and have loxodromic stabilizers $\Gamma_e$ and $\Gamma_{e'}$.
Since these loxodromic subgroups share a fixed point, they must be co-elementary. Thus there is a loxodromic element $g \in \Gamma$ that stabilizes both $e$ and $e'$.
Suppose $v=\{c\}$ for a cut point $c\in C$.
Then there exist $w,w'\in V_0(T)$ members of $K$ adjacent to $\{c\}$ such that $g$ stabilizes $w$ and $w'$ and also fixes $c$.
Let $y \in M$ be the other fixed point of $g$.  By Lemma~\ref{lem:SwensonLoxodromicfollowup}, every member of $K$ stabilized by $g$ lies in the closed interval $\bigl[ [c],[y] \bigr]$.
But $w$ and $w'$ are members of $K$ fixed by $g$ and $c$ is between them, which is a contradiction.

Consider the tree $\bar{T}$ formed from $T$ by collapsing to a point each edge whose stabilizer is loxodromic. Then the preimage of each vertex under the collapse $T \to \bar{T}$ is a subtree of diameter at most two, since it contains at most one vertex from $V_0(T)$.  In particular, $T \to \bar{T}$ is an equivariant quasi-isometry, so any subgroup of $\Gamma$ fixing a vertex of $\bar{T}$ also fixes a vertex of $T$.
But $\Gamma$ acts on $\bar{T}$ without inversions, relative to $\P$, and with only finite edge stabilizers, so $\Gamma$ fixes a vertex of $\bar{T}$ by our hypothesis that $\Gamma$ does not split relative to $\P$ over finite subgroups.
Hence, $\Gamma$ also fixes a vertex of $T$, contradicting the fact that $\Gamma$ has no fixed point in $K$.
\end{proof}

The following two lemmas are proved using arguments inspired by the proof of \cite[Thm.~6.1]{Bowditch99_Treelike}.

\begin{lemma}
\label{lem: edge stabs are finite}
Let $X$ and $Y$ be two adjacent points in the interior ${K}^o_{\alpha}$ of ${K}_\alpha$ for any ordinal $\alpha\ge 1$. Then the stabilizer of\/ $\{X, Y\}$ is either finite or a parabolic subgroup of the convergence action on $M$.
\end{lemma}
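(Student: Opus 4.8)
The plan is to argue by contradiction: suppose some $g \in \Stab(\{X,Y\})$ has infinite order. Since $\Gamma$ acts on $M$ as a convergence group, $g$ is either loxodromic or parabolic in $\Gamma$. Because the action of $\Gamma$ on $K^o_\alpha$ is without inversions (Lemma~\ref{lem: relative in quotient}), $g$ fixes each of $X$ and $Y$, hence also fixes setwise the corresponding $\sim_\alpha$-classes $X, Y \subseteq K$. These are disjoint full subpretrees of $K$ with full union, and, since $\alpha \ge 1$, Lemma~\ref{lem:UniqueConnector} applies to the adjacent points $X,Y$ of $K_\alpha$ and yields, after possibly interchanging the two, a \emph{unique} connector point $p = p(Y,X) \in Y \subseteq K$. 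By its uniqueness and the $g$-invariance of $X$ and $Y$, the point $p$ is fixed by $g$; in particular $\Fix_K(g)$ is nonempty and meets $Y$.

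Next I would identify $\Fix_K(g)$. If $g$ is loxodromic with fixed points $a,b \in M$, then by Lemma~\ref{lem:SwensonLoxodromic} the set $\Fix_K(g)$ is either the two-point terminal set $\{[a],[b]\}$ (when $a,b$ are separated by $C$) or a closed interval containing at most one non-cut-point class (when they are not). If $g$ is parabolic with fixed point $q$, then $q \notin C$ because $C$ is nonparabolic; since $q$ lies in every closed $g$-invariant subset of $M$ \cite[\S 2]{Tukia94}, the closure in $M$ of any $g$-fixed class of $K$ contains $q$, which forces that class to equal $[q]$ (a cut-point class is impossible, and a non-cut-point class whose closure meets $q$ lies in the $\mathcal R$-class of $q$), so $\Fix_K(g) = \{[q]\}$. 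Thus in every case other than a loxodromic $g$ whose fixed points are separated by $C$, the set $\Fix_K(g)$ is a \emph{full} subpretree of $K$ with at most three points. Because $X$ and $Y$ are distinct $\sim_\alpha$-classes and $\alpha \ge 1$ gives $\sim_1\,\subseteq\,\sim_\alpha$, any point of $X$ and any point of $Y$ span an infinite interval in $K$; so a full finite $\Fix_K(g)$ containing $p \in Y$ cannot meet $X$, giving $\Fix_K(g) \cap X = \emptyset$. But $g$ fixes $p \in K$ and stabilizes the $g$-invariant full subpretree $X$, and $\langle g \rangle$ acts non-nestingly on the interval complete, median pretree $K$ (the non-nesting property on $K^o$ from Proposition~\ref{prop:CutPointPretree} extends to all of $K$). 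Hence Lemma~\ref{lem:ParabolicInterior} produces a $g$-fixed point in $X$, contradicting $\Fix_K(g)\cap X=\emptyset$.

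The remaining case is a loxodromic $g$ with $a$ and $b$ separated by a point of $C$, where $\Fix_K(g)=\{[a],[b]\}$ with both points terminal in $K$. After relabeling, $p=[b]\in Y$; if $[a]$ lies in $Y$ or outside $X\cup Y$ the previous argument applies verbatim, so the only configuration to eliminate is $[a]\in X$. I would rule this out following the portion of \cite[Thm.~6.1]{Bowditch99_Treelike} the statement refers to: fixing $c\in C$ between $a$ and $b$, the classes $\{g^nc\}$ are pairwise distinct, and by the north--south dynamics of $g$ the interval in $K$ from $[a]$ to $\{c\}$ passes through $\{g^nc\}$ for every $n\ge 1$, so that interval is infinite. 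For $\alpha=1$ this already contradicts $[a]\sim_1\{c\}$, which must hold since both points lie in the single $\sim_1$-class $X$; equivalently, such a $g$ fixes only terminal points of $K$ and, after collapsing, only terminal points of $K_\alpha$, which is incompatible with $X\in K^o_\alpha$. The main obstacle is precisely this step: controlling the successive collapses $K\to K_\alpha$ well enough to see that the terminal $g$-fixed points of $K$ are not promoted to interior fixed points of $K_\alpha$; for $\alpha\ge 2$ this requires tracking the $g$-orbit of the separating cut point through the quotients $K_1,\dots,K_\alpha$, as in Bowditch's argument.
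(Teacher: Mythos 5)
Your opening steps are sound: an infinite-order $g\in\Stab(\{X,Y\})$ fixes $X$ and $Y$ separately (no inversions), fixes the unique connector point $p$ of Lemma~\ref{lem:UniqueConnector}, the identification of $\Fix_K(g)$ via Lemma~\ref{lem:SwensonLoxodromic} and Tukia is correct, and distinct $\sim_\alpha$--classes are indeed separated by infinite intervals of $K$. The first genuine gap is the parenthetical, load-bearing claim that non-nesting ``extends to all of $K$.'' This is false in the present setting: a loxodromic $g$ whose fixed points $a,b\in M$ are separated by a point of $C$ has exactly the two terminal fixed classes $[a],[b]$ (Lemma~\ref{lem:SwensonLoxodromic}(1)) and translates toward them; your own final paragraph asserts $\{g^nc\}\in\bigl([a],\{c\}\bigr)$ for all $n\ge 1$, and already $n=1$ gives $g\bigl([\{c\},[a]]\bigr)=[\{gc\},[a]]\subsetneq[\{c\},[a]]$, a properly nested interval of $K$. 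Consequently Lemma~\ref{lem:ParabolicInterior} cannot be applied with ambient pretree $K$ as you do, and the statement that ``the previous argument applies verbatim'' when $g$ is such a loxodromic with $[a]\notin X$ fails outright: such a $g$ has no fixed point in $K^o$ at all, so no fixed-point-transfer argument of this kind can force a fixed point in $X$. In the parabolic case and the non-separated loxodromic case your scheme is repairable, but only after an argument you omit: one must first show that $g$ fixes a point of the interior (for instance via Levitt's theorem on $R$ combined with the convergence dynamics, exactly as in the proof of Proposition~\ref{prop:CutPointPretree}), and then apply Lemma~\ref{lem:ParabolicInterior} inside $K^o$ with $K'=X\cap K^o$, which is nonempty because $X\in K^o_\alpha$.

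The second gap is that the separated-fixed-points loxodromic case is the heart of the lemma --- these are precisely the elements one expects to stabilize an adjacent pair coming from a cut point --- and you leave it unfinished for $\alpha\ge 2$, deferring the control of the transfinite collapses (that the terminal fixed classes of such a $g$ are never promoted to a pair of adjacent \emph{interior} points of $K_\alpha$ stabilized by $g$) to ``Bowditch's argument'' without carrying it out. That step is exactly the content the paper imports from the proof of \cite[Thm.~6.1]{Bowditch99_Treelike}, with Lemma~\ref{lem:SwensonLoxodromic} substituted for \cite[Lem.~6.9]{Bowditch99_Treelike}; without it, and without the repair above, the proposal does not prove the lemma.
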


\begin{proof}
In a convergence group, any subgroup with no loxodromic elements must be either finite or parabolic.  We prove the lemma in contrapositive form, as follows.
Suppose the stabilizer of $\{X,Y\}$ contains an infinite order element $\gamma$ that acts loxodromically on $M$.
We show that either $X$ or $Y$ must be terminal in the pretree $K_\alpha$.

By Lemma~\ref{lem:UniqueConnector}, one of the pairs, say $(Y,X)$, has a unique connector point $p=p(Y,X)$ and the other pair $(X,Y)$ has no connector point.
Recall that the points $X$ and $Y$ of $K_\alpha = K/\!\!\sim_{\alpha}$ are subsets of $K$, and $p$ is an element of $K$ that is also an element of the set $Y$.
Since $\gamma$ stabilizes $\{X,Y\}$, the point $p$ must be fixed by $\gamma$.

If $x,y \in X$ and we direct the intervals $[x,p]$ and $[y,p]$ so that $x<p$ and $y<p$,
then the half-open intervals $[x,p)$ and $[y,p)$ are cofinal, since the median of $\{x,y,p\}$ cannot equal $p$.
We claim that $X$ contains a point of $K$ that is fixed by $\gamma$.
Suppose, by way of contradiction, that $\gamma$ does not fix any point of $X$.
Choose any $y \in X$ and choose $x \in [y,p) \cap \bigl[ \gamma(y),p \bigr) \subseteq X$.
Now, $\gamma(x) \in \gamma\bigl( y,p) \bigr) = \bigl[ \gamma(y),p \bigr)$.
Since $x$ and $\gamma(x)$ are, by assumption, distinct, and both lie in $\bigl[ \gamma(y),p \bigr)$, we must have either $x \in \bigl( p,\gamma(x) \bigr)$ or $\gamma(x) \in (p,x)$.
As $p$, $x$, and $\gamma(x)$ lie in the interior $K^o$, this arrangement violates the non-nesting property of $K^o$, established in Proposition~\ref{prop:CutPointPretree}.

Let $q \in X$ be a fixed point of $\gamma$. Since $q \notin Y$, we see that $p \not\sim_\alpha q$.  It follows that $p \not\sim_1 q$. In other words, in the pretree $K$, the interval $(p,q)$ is infinite.
Since $\gamma$ is loxodromic, we must be in the first case of Lemma~\ref{lem:SwensonLoxodromic}. Thus the classes of $p$ and $q$ in $K$ are terminal, contradicting the assumption that $X$ and $Y$ are non-terminal.
\end{proof}

\begin{lemma}
\label{lem: pretree quotient is non trivial relative to P and nonnesting}
For each $\alpha$, the action of\/ $\Gamma$ on ${K}_\alpha$ does not have a fixed point.
The action of\/ $\Gamma$ on the interior ${K}_\alpha^o$ is non-nesting and relative to $\P$.
\end{lemma}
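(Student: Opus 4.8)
The second assertion---that the action on $K_\alpha^o$ is non-nesting and relative to $\P$---is already contained in Lemma~\ref{lem: relative in quotient}, so the only thing to prove is that $\Gamma$ has no fixed point in $K_\alpha$. The plan is transfinite induction on $\alpha$. The base cases $\alpha=0$ and $\alpha=1$ are precisely Proposition~\ref{prop:CutPointPretree} and Lemma~\ref{lem:K1Nontrivial}, so it remains to handle successor ordinals $\alpha=\beta+1$ with $\beta\ge 1$ and limit ordinals.

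For a limit ordinal $\alpha$ I would exploit finite generation of $\Gamma$. Since $\sim_\alpha=\bigcup_{\beta<\alpha}\sim_\beta$, the pretree $K_\alpha$ is the direct limit of $(K_\beta)_{\beta<\alpha}$ along the collapse maps, so every point $X\in K_\alpha$ is the image of some $x\in K_\beta$ with $\beta<\alpha$. If $\Gamma$ fixed $X$, then for each of the finitely many generators $g_i$ of $\Gamma$ the points $g_ix$ and $x$ would have the same image in $K_{\gamma_i}$ for some $\gamma_i<\alpha$; putting $\gamma=\max(\beta,\gamma_1,\dots,\gamma_n)<\alpha$, the image of $x$ in $K_\gamma$ would be fixed by every $g_i$ and hence by $\Gamma$, contradicting the inductive hypothesis at the ordinal $\gamma$.

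The substance is the successor case. Assume for contradiction that $\Gamma$ fixes $X\in K_{\beta+1}$ with $\beta\ge 1$. By Remark~\ref{rem:Collapsing}, the preimage of $X$ in $K_\beta$ is a $\Gamma$--invariant discrete full subpretree, which we view as the vertex set of a simplicial tree. Because $\Gamma$ has no fixed point in $K_\beta$ (induction) and acts without inversions, this subpretree has at least three points and hence a median, so its interior $X^o$ is nonempty; moreover $X^o$ is $\Gamma$--invariant, is full in $K_\beta$, and lies inside $K_\beta^o$ (which is interval complete and median since $K_\beta$ is complete and median). Let $T$ be the simplicial tree with vertex set $X^o$ whose edges are the adjacent pairs. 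I would then verify that the $\Gamma$--action on $T$ is without inversions (adjacency in $X^o$ coincides with adjacency in $K_\beta$ by fullness), is relative to $\P$ (each $P\in\P$ fixes a point of $K_\beta^o$ by Lemma~\ref{lem: relative in quotient} and therefore a point of $X^o$ by Lemma~\ref{lem:ParabolicInterior}, applied with ambient pretree $K_\beta^o$ and $P$--invariant full subpretree $X^o$), has all edge stabilizers torsion (Lemma~\ref{lem: edge stabs are finite} applied to $K_\beta$, using $\beta\ge 1$), and has no global fixed vertex (a fixed vertex would be a fixed point in $K_\beta$). Since $\Gamma$ is finitely generated, an action on a simplicial tree with no global fixed vertex restricts to a nontrivial action on the minimal invariant subtree, and equivariance of nearest-point projection carries each peripheral fixed point into that subtree; this exhibits a splitting of $\Gamma$ relative to $\P$ over a torsion edge group, contradicting our standing hypothesis and completing the induction.

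I expect the successor case to be the main obstacle. Within it, the delicate points are the pretree bookkeeping that identifies adjacency in $X^o$ with adjacency in $K_\beta$ (so Lemmas~\ref{lem: relative in quotient} and~\ref{lem: edge stabs are finite} genuinely apply to the edges of $T$), the use of Lemma~\ref{lem:ParabolicInterior} to push the fixed points of the peripheral subgroups down into $X^o$, and the passage to the minimal subtree without losing the property that the resulting splitting is relative to $\P$; the Serre-type fact that a finitely generated group with no global fixed vertex must contain a hyperbolic element, hence split over an edge stabilizer, is the standard input that produces the final contradiction.
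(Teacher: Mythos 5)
Your proposal is correct and follows essentially the same route as the paper: the same transfinite induction, the identical finite-generation argument at limit ordinals, and the same successor-step construction (preimage of the fixed point as a discrete full subpretree, its nonempty interior viewed as a simplicial tree, Lemmas~\ref{lem: relative in quotient}, \ref{lem:ParabolicInterior} and~\ref{lem: edge stabs are finite} giving an action relative to $\P$ without inversions with torsion edge stabilizers, contradicting the no-splitting hypothesis). The extra details you supply (adjacency in $X^o$ versus $K_\beta$, passage to the minimal subtree) are just explicit versions of steps the paper leaves implicit.
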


\begin{proof}
It follows directly from Lemma~\ref{lem: relative in quotient} that the action on the interior of $K_\alpha$ is non-nesting, relative to $\P$, and without inversions for each $\alpha$.
We prove by transfinite induction on $\alpha$ that $\Gamma$ acts on $K_\alpha$ without a fixed point for all $\alpha$.
Observe that nontriviality is known for $K_1$ by Lemma~\ref{lem:K1Nontrivial}.

Let $\alpha$ be a limit ordinal.
Assume that $\Gamma$ is known to act nontrivially on $K_\beta$ for all $\beta<\alpha$.
Suppose, by way of contradiction that $\Gamma$ fixes a point $X$ in $K_\alpha$. 
Then $X$ is an equivalence class in ${K}$ stabilized by $\Gamma$. Choose any $x \in X$ and a finite generating set $\{s_1,\dots, s_n\}$ of $\Gamma$. Clearly as $s_i (x) \sim_{\alpha} x$, there exists $\beta_i < \alpha$ such that $x \sim_{\beta_i} s_i(x)$ for each $i$. Taking $\beta =\max\{\beta_1,\dots,\beta_n\}$, we see that $\Gamma$ fixes a point in ${K}_{\beta}$, contradicting our hypothesis since $\beta<\alpha$.

For the successor step, suppose $\Gamma$ is known to act nontrivially on $K_\beta$ for some $\beta\ge 1$.  We show that $\Gamma$ acts nontrivially on $K_\alpha$ for $\alpha=\beta+1$.
By way of contradiction, suppose $\Gamma$ fixes a point $x$ in $K_\alpha$. 
As explained in Remark~\ref{rem:Collapsing}, since $K_\alpha$ is the quotient of ${K}_{\beta}$ by the finite interval relation, the preimage of $\{x\}$ in $K_\beta$ is a $\Gamma$--invariant, discrete, full subpretree $X$.
As in the proof of Lemma~\ref{lem:K1Nontrivial}, the interior $X^o$ is a nonempty discrete full subpretree of $K^o_\beta$ on which $\Gamma$ acts relative to $\P$ without inversions.

The discrete median pretree $X^o$ may be considered as the set of vertices of a simplicial tree $T$
on which $\Gamma$ acts without inversions relative to $\P$.
Moreover, by Lemma~\ref{lem: edge stabs are finite} the stabilizer of each edge of $T$ is either finite or parabolic.
As we are working under the hypothesis that $\Gamma$ does not split over such a subgroup relative to $\P$, there is a vertex of $T$ fixed by $\Gamma$, contradicting the inductive hypothesis that $\Gamma$ has no fixed point in $K_\beta$.
\end{proof}

Combining the results established above gives the following conclusion.

\begin{proposition}
\label{prop: reldendrite}
Let $(\Gamma,\P)$ be relatively hyperbolic with boundary $M = \boundary(\Gamma,\P)$.
Assume that $\Gamma$ does not split relative to $\P$ over a finite or parabolic subgroup.
If $M$ has a nonparabolic cut point $c$ with orbit $C = \Gamma(c)$, then the induced action of\/ $\Gamma$ on the dendrite quotient $D(M,C)$ is a convergence group action without a fixed point.
The action on the interior of $D(M,C)$ is non-nesting and relative to $\P$ with finite interval stabilizers.
\end{proposition}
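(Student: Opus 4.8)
The plan is to derive every assertion from the pretree isomorphism $D(M,C)\cong K_{\alpha_0}$ of Proposition~\ref{prop:DendritePretree}, together with the lemmas already established in this section and two standard facts about convergence actions on metrizable continua (Tukia \cite{Tukia94}): that a monotone equivariant quotient of a convergence action is again a convergence action, and that a torsion subgroup of a convergence group is finite. By Proposition~\ref{prop:DendritePretree}, the quotient $M\to K$ induces a pretree isomorphism $D(M,C)\to K_{\alpha_0}$ intertwining the two $\Gamma$--actions; as ``terminal'' is intrinsic to the pretree structure, this isomorphism carries the interior of $D(M,C)$ onto $K_{\alpha_0}^o$. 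Applying Lemma~\ref{lem: relative in quotient} and Lemma~\ref{lem: pretree quotient is non trivial relative to P and nonnesting} with $\alpha=\alpha_0$ then gives at once that $\Gamma$ has no fixed point on $D(M,C)$ and that the induced action on the interior is non-nesting, without inversions, and relative to $\P$.

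For the convergence statement, observe first that, having no global fixed point, $D(M,C)$ is a nondegenerate continuum and hence has at least three points. The composite $q\colon M\to M/\mathcal{R}\to D(M,C)$, whose second factor is the maximal Hausdorff quotient by Proposition~\ref{prop:MaximalHausdorff}, is a $\Gamma$--equivariant monotone surjection of continua \cite{Bowditch99_Treelike}. Given a sequence of distinct elements $(g_n)$ of $\Gamma$, the convergence action on $M$ furnishes a subsequence, still called $(g_n)$, and points $a,b\in M$ with $g_n\to a$ uniformly on compact subsets of $M\setminus\{b\}$ and $g_n^{-1}\to b$ uniformly on compact subsets of $M\setminus\{a\}$. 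For $\bar x\in D(M,C)$ with $\bar x\ne q(b)$ the fiber $q^{-1}(\bar x)$ is a compact subset of $M\setminus\{b\}$, so the equivariance and uniform continuity of $q$ give $g_n\bar x = q\bigl(g_n\, q^{-1}(\bar x)\bigr)\to q(a)$, uniformly on compact subsets of $D(M,C)\setminus\{q(b)\}$, and symmetrically for $g_n^{-1}$. Thus $(g_n)$ has source--sink dynamics on $D(M,C)$ with limiting points $q(a)$ and $q(b)$, so by the dynamical criterion for convergence actions on metrizable compacta \cite{Tukia94} the $\Gamma$--action on $D(M,C)$ is a convergence action.

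It remains to treat the interval stabilizers. Let $[X,Y]$ be a nondegenerate interval of $K_{\alpha_0}^o$ and let $H$ be its setwise stabilizer. Since the action is non-nesting, $H$ permutes the endpoints $\{X,Y\}$, so the subgroup $H_0\le H$ of index at most $2$ fixing both $X$ and $Y$ acts on the arc $[X,Y]$ preserving its linear order. If $h\in H_0$ were nontrivial on $[X,Y]$, choose $Z\in(X,Y)$ with $h(Z)\ne Z$; replacing $h$ by $h^{-1}$ and $Z$ by $h(Z)$ if needed, we may assume $h(Z)\in(Z,Y)$, and then $h\bigl([Z,Y]\bigr)=[h(Z),Y]\subsetneq[Z,Y]$, contradicting non-nesting. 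Hence $H_0$ fixes $[X,Y]$ pointwise. Since $K_{\alpha_0}$ has no pair of adjacent points, the interval $[X,Y]$ contains infinitely many points that, viewed as equivalence classes in $K$, are singletons $\{c\}$ with $c\in C$; for each of these, $H_0$ fixes $c$ as a point of $M$. Therefore $H_0$ fixes at least three distinct points of $M$, none of them parabolic because the members of $C$ are nonparabolic. An infinite-order element of a convergence group is loxodromic or parabolic and so fixes at most two points of $M$, so $H_0$ is a torsion group; as $H$ contains $H_0$ with index at most $2$, it too is a torsion subgroup of the convergence group $\Gamma$, hence finite by \cite{Tukia94}.

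I expect the delicate point to be the claim in the last paragraph that a nondegenerate interval of the fully collapsed quotient $K_{\alpha_0}^o$ still meets $C$ in infinitely many points---equivalently, that passing to the dendrite quotient does not erase all of the cut points inside any arc. This is what lets the nonparabolicity hypothesis on $C$ be used after the collapse, at which stage Lemma~\ref{lem: edge stabs are finite} (which concerns adjacent pairs, of which $K_{\alpha_0}$ has none) no longer applies directly; it should follow from Lemma~\ref{lem:KAdjacent} together with the description, in the proof of Proposition~\ref{prop:MaximalHausdorff}, of exactly which classes are identified, but it must be spelled out carefully. A secondary item to verify is the monotonicity of $q$, namely that the classes being collapsed are connected subsets of $M$.
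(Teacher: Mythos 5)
Your first two paragraphs track the paper's own proof: the identification $D(M,C)\cong K_{\alpha_0}$ together with Lemma~\ref{lem: pretree quotient is non trivial relative to P and nonnesting} gives the fixed-point-free, non-nesting, relative-to-$\P$ statements, and your source--sink argument correctly fills in the (terse) claim that the equivariant quotient $M\to D(M,C)$ inherits the convergence property; note that monotonicity of $q$ is not actually used anywhere in that argument. The genuine gap is in your last paragraph. First, the claim you flag yourself---that a nondegenerate interval of $K_{\alpha_0}$ contains infinitely many points whose fiber is a single cut point $\{c\}$---does not follow from Lemma~\ref{lem:KAdjacent} or from the proof of Proposition~\ref{prop:MaximalHausdorff}, and it is doubtful as a general pretree statement: nothing rules out an interval in which the cut points between two non-identified classes are ordered like $\mathbb{Q}\times\mathbb{Z}$ (lexicographically), so that every $\approx$--class met by the interval contains more than one point of $K$; then $H_0$ only stabilizes fibers setwise and you get no fixed points in $M$. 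Second, even granting that claim, the closing appeal to ``a torsion subgroup of a convergence group is finite'' is false: by Tukia's Tits alternative an infinite torsion subgroup is elementary, hence parabolic, and such subgroups genuinely occur---removing Bowditch's no-infinite-torsion hypothesis is precisely the point of this paper, so that step cannot be used here. (Had you really produced three fixed points in $M$, the correct justification would have been properness of the action on $\Theta^3(M)$, not the torsion claim.)

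Both problems disappear if you use what you proved in your second paragraph, which is how the paper argues. Since $H_0$ fixes the nondegenerate interval $[X,Y]$ pointwise and $K_{\alpha_0}$ has no adjacent pairs, $H_0$ fixes at least three distinct points of the dendrite $D(M,C)$; because the $\Gamma$--action on $D(M,C)$ is a convergence action, the stabilizer of a distinct triple is finite by properness on $\Theta^3\bigl(D(M,C)\bigr)$, so $H_0$, and hence $\Stab\bigl([X,Y]\bigr)$, is finite. No information about which points of the interval come from $C$ is needed.
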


We now use the fact that the quotient $D(M,C)$ of $M$ is Hausdorff.

\begin{proof}
We assume $(\Gamma,\P)$ is nonelementary, since otherwise the result is trivial.
Thus the action of $\Gamma$ on the compactum $M$ is a minimal convergence group action.
Since $\Gamma$ does not split over a finite or parabolic subgroup relative to $\P$, the group $\Gamma$ is finitely generated (Theorem~\ref{thm:OsinFinGen}) and the space $M$ is connected (Proposition~\ref{prop: stallings}) and metrizable (\cite{Gerasimov09}).

As explained in Proposition~\ref{prop:DendritePretree}, the dendrite quotient $D=D(M,C)$ is isomorphic as a pretree to the stable pretree $K_{\alpha_0}$.
Therefore, the action of $\Gamma$ on $D$ is nontrivial, and the action on its interior is non-nesting and relative to $\P$ by Lemma~\ref{lem: pretree quotient is non trivial relative to P and nonnesting}.
Since $D$ is Hausdorff, the quotient $M\to D$ is a closed map. As $\Gamma$ has a minimal convergence group action on $M$, the induced action on $D$ is also a minimal convergence group action.

Let $I=[x,y]$ be a nondegenerate interval in the interior of $D$. The stabilizer of $I$ has a subgroup $H$ of index at most two fixing $x$ and $y$.  By non-nesting, $H$ fixes $I$ pointwise.  As $H$ is a convergence group on $M$ with at least three fixed points, $H$ is finite, so that $\Stab(I)$ is finite as well.
\end{proof}

\section{From topological trees to metric and simplicial trees}
\label{sec:Isometric action of a relatively hyperbolic group}

In this section, we extend a theorem of Levitt from the setting of finitely presented groups to groups with a finite relative presentation. The main result of the section is Corollary~\ref{cor: relativerips thm}, which produces an action on a simplicial tree from a non-nesting action on a topological $\R$--tree.

\begin{definition}
Given a group pair $(\Gamma,\P)$, we say that $\Gamma$ is \emph{finitely presented relative to $\P=\{P_1,\dots,P_k\}$} if there exists a finitely generated free group $F$ such that $\Gamma$ is the quotient of $P_1 * \cdots * P_k * F$ by the normal closure of a finite subset.
\end{definition}

The following result was established for finitely presented groups by Levitt \cite{Levitt98}.  We extend that result to certain relatively finitely presented groups by adapting a technique used in the proof of \cite[Thm.~9.9]{GuirardelLevitt_Splittings}.

\begin{proposition}
\label{prop:Levitt}
Suppose the finitely generated group $\Gamma$ is also finitely presented relative to a finite family of finitely generated subgroups $\mathbb{P}$.
If\/ $\Gamma$ admits a nontrivial non-nesting action on a topological $\R$--tree $R$ relative to $\mathbb{P}$, then it admits a nontrivial isometric action on some $\R$--tree $R_0$ relative to $\mathbb{P}$ such that every subgroup fixing an interval in $R_0$ also fixes an interval in $R$. 
\end{proposition}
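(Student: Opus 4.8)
The obstruction to applying Levitt's theorem \cite{Levitt98} directly is that $\Gamma$ need not be finitely presented, since the peripheral subgroups $P\in\P$ need not be. The plan is to reprove Levitt's result while tracking where finite presentation enters, and then to check that a finite relative presentation, together with the ellipticity of the peripheral subgroups on $R$, is an adequate substitute.

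First I would pass to the free product $G_0 = P_1 * \cdots * P_k * F$, where $F$ is the finitely generated free group supplied by the relative presentation, with its canonical surjection $q\colon G_0\to\Gamma$ whose kernel is the normal closure of a finite set $\{w_1,\dots,w_m\}\subset G_0$. The non-nesting action of $\Gamma$ on $R$ pulls back along $q$ to a non-nesting action of $G_0$ on $R$ in which each free factor $P_i$ is elliptic, fixing a point $x_i\in R$ (because the $\Gamma$--action is relative to $\P$), and each relator $w_j$ acts trivially. Thus $\Gamma$ acts on $R$ through a finite amount of data: a free basis of $F$ and the finite generating sets of the $P_i$ as generators, the $w_j$ as relators, and the points $x_i$ recording the ellipticity of the $P_i$.

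Next I would carry out the resolution step of Levitt's argument relative to this data. Fix a base point $x_0\in R$ and let $Y\subseteq R$ be the finite topological subtree spanned by $x_0$, its translates $sx_0$ for $s$ in the chosen finite generating set, and the peripheral fixed points $x_i$; endow $Y$ with a metric making it a finite metric tree. Each generator determines a partial comparison between $Y$ and a translate, and each $P_i$ contributes an elliptic vertex with stabilizer $P_i$; assembling these yields a finite system of isometries, that is, a band complex $X$ equipped with elliptic vertex groups isomorphic to the $P_i$. The finiteness of the relative presentation is exactly what guarantees that $X$ is finite and that its fundamental group is $\Gamma$, with the $P_i$ appearing as elliptic vertex groups rather than as subgroups that must themselves be resolved; moreover $X$ resolves the given action, in the sense that there is a $\Gamma$--equivariant map from the associated $\R$--tree toward $R$. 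Running the Rips machine on $X$ then produces an isometric action of $\Gamma$ on an $\R$--tree $R_0$, together with a $\Gamma$--equivariant continuous map $p\colon R_0\to R$ that does not collapse any nondegenerate interval to a point.

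It remains to read off the three asserted properties. For nontriviality: after rescaling so that the largest translation length among the finitely many generators equals $1$, this value persists in the limit length function, since the maximum of finitely many convergent sequences converges to the maximum of the limits; hence the action on $R_0$ has no global fixed point. For relativity: each $P_i$ is elliptic at every stage of the construction and therefore elliptic on $R_0$, so the action on $R_0$ is relative to $\P$. For interval stabilizers: if a subgroup $H\le\Gamma$ fixes a nondegenerate interval $I_0\subseteq R_0$ pointwise, then $p(I_0)$ is a connected subset of $R$ with more than one point, hence contains a nondegenerate interval, and $p(hz)=h\,p(z)=p(z)$ for all $z\in I_0$ and $h\in H$, so $H$ fixes that interval of $R$ pointwise. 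The main obstacle, and the technical heart of the section, is the verification underlying the third step: that the Rips machine remains valid when the peripheral subgroups $P_i$ are infinite and possibly not finitely presented, so that they may genuinely be carried along as elliptic vertex groups throughout the machine and into the passage to $R_0$, while the resolving map $p$ survives. This is precisely the point at which ``finitely presented'' must be weakened to ``finitely presented relative to $\P$.''
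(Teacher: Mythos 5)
Your resolution setup is in the right spirit and close to what the paper does: one builds a foliated $2$--complex from a finite subtree of $R$ containing the peripheral fixed points, using a presentation of $\Gamma$ in which the (possibly infinite) relator sets of the $P_i$ cost nothing because each $P_i$ is elliptic, so only the finitely many extra relators of the relative presentation impose conditions on the subtree. But there is a genuine gap at the central step, which is precisely the step the proposition exists to perform: converting topological data into metric data. When you ``endow $Y$ with a metric making it a finite metric tree'' and declare that the generators then give ``a finite system of isometries,'' this is false as stated --- the partial maps $\phi_s\colon Y_s \to sY_s$ induced by the generators are merely homeomorphisms between subtrees, and no choice of metric makes them isometries for free. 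The entire content of Levitt's theorem is that the non-nesting hypothesis allows one to construct a non-atomic probability measure on the finite tree, invariant under the finite system of partial maps; only with respect to the pseudometric coming from this measure does the system become isometric, and integrating the induced transverse measure on the foliated cover is what produces $R_0$. Your proposal never produces this measure and never uses non-nesting where it is actually needed. The Rips machine cannot substitute for it: the Rips machine (in its relative Guirardel--Levitt form) analyzes isometric or measured data, and in this paper it enters only afterwards, in Corollary~\ref{cor: relativerips thm}, to turn the isometric action on $R_0$ into a splitting. So the ``technical heart'' you identify --- running the Rips machine with infinite, non--finitely presented elliptic vertex groups --- is the content of that corollary (quoted from Guirardel--Levitt), not of this proposition.

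Two further assertions in your last paragraph do not hold in this construction. There is no sequence of trees or limit length function here, so the rescaling argument for nontriviality has nothing to attach to; and there is in general no $\Gamma$--equivariant map $p\colon R_0 \to R$ that avoids collapsing intervals, since the resolution map goes from the foliated cover to $R$ and is constant on leaves, while $R_0$ is the pseudometric quotient of that cover, and the invariant measure need not have full support. Nontriviality, relativity to $\P$, and the statement that a subgroup fixing an interval of $R_0$ fixes an interval of $R$ all come from Levitt's original argument, which goes through verbatim once the resolution exists; the only new input needed --- and the only thing requiring proof here --- is the existence of the resolution under a finite relative presentation, which your first two paragraphs essentially supply but your third paragraph then bypasses.
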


\begin{proof}
The first step of the proof of \cite[Thm.~1]{Levitt98} is to construct a certain resolution of the action on $R$ whenever $\Gamma$ is finitely presented.  As in \cite{GuirardelLevitt_Splittings}, we show that such a resolution exists under the weaker hypothesis that $(\Gamma,\P)$ has a finite relative presentation and $\Gamma$ and each $P\in \P$ are finitely generated.
For each $P \in \P$, choose a presentation $P = \presentation{S_P}{\mathcal{W}_P}$ with $S_P$ finite.
Fix a presentation $\Gamma = \presentation{S}{\mathcal{W}}$ such that $S$ is a finite set containing each $S_P$ and such that $\mathcal{W}$ contains each $\mathcal{W}_P$ together with a finite set $\mathcal{W}_0$ of additional relators.

Let $K$ be a subtree of $R$ that is the convex hull of a finite set of points.
For each $s \in S$, let $K_s = K \cap s^{-1}K$ and consider the restriction $\phi_s\colon K_s \to sK_s$ of $s$.
Choose $K$ large enough that each set $K_s$ contains more than one point, that for each relator $r=s_1^{\epsilon_1} \cdots s_n^{\epsilon_n}$ in $\mathcal{W}_0$ the domain of the map $\phi_{s_1}^{\epsilon_1} \cdots \phi_{s_n}^{\epsilon_n}$ is nonempty, and that $K$ contains a point $x_P$ fixed by $P$ for each $P \in \P$.

Build a foliated $2$--complex $\Sigma$ by starting with $K$ (foliated by points) and gluing bands $K_s \times [0,1]$ (foliated by $\{*\}\times [0,1]$) to $K$ so that $(x,0)$ is glued to $x$ and $(x,1)$ is glued to $\phi_s(x)$.  
Then $\pi_1(\Sigma)$ may be naturally identified with the free group on $S$.
By construction, each relator of $\mathcal{W}$ is represented (up to free homotopy) by a loop contained in a leaf of $\Sigma$.
The natural homomorphism $\rho\colon \pi_1(\Sigma) \to \Gamma$ gives rise to a foliated covering space $p \colon \bar\Sigma_\rho \to \Sigma$.
Furthermore, there exists a unique $\Gamma$--equivariant \emph{resolution map} $f\colon \bar\Sigma_\rho \to R$ that restricts to the covering map $p$ on some fixed component of $p^{-1}(K)$ and is constant on each leaf of the foliation.

The rest of the argument of \cite{Levitt98} follows without change using this resolution map.
The finite system of maps $\{\phi_s\}$ is closed and non-nesting in the sense of \cite{Levitt98} since the action on $R$ is non-nesting.
Given such a system, Levitt shows that $K$ admits a probability measure $\mu$ with no atom such that each $\phi_s$ maps the restriction $\mu|K_s$ onto $\mu|sK_s$.
The measure on $K$ induces a transverse measure (perhaps not of full support) on the foliated cover $\bar\Sigma_\rho$.  Integration of this measure along paths gives a length pseudometric on $\bar\Sigma_\rho$. Identifying points $u,v$ with $d(u,v)=0$ produces a metric $\R$--tree $R_0$ on which $\Gamma$ acts nontrivially and isometrically relative to $\P$ such that every subgroup fixing an interval in $R_0$ also fixes an interval in $R$. 
\end{proof}

The fundamental structure theorem for stable actions of finitely presented groups on $\R$--trees is the splitting theorem of Bestvina--Feighn \cite{BestvinaFeighn_RipsMachine}.
A theorem of Osin \cite{Osin06_RelHyp} states that every relatively hyperbolic pair $(\Gamma,\P)$ is relatively finitely presented, and if $\Gamma$ is finitely generated then so is each $P \in \P$.
A relatively hyperbolic analogue of the Bestvina--Feighn structure theorem due to Guirardel--Levitt \cite{GuirardelLevitt_Splittings} gives the following corollary.

\begin{corollary}
\label{cor: relativerips thm}
Let $(\Gamma,\mathbb{P})$ be relatively hyperbolic with $\Gamma$ finitely generated. 
If\/ $\Gamma$ acts nontrivially by non-nesting homeomorphisms on a topological $\R$--tree $R$ relative to $\P$ with elementary arc stabilizers, then $\Gamma$ splits over an elementary subgroup relative to $\P$. \qed
\end{corollary}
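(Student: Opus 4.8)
The plan is to deduce the corollary by combining Proposition~\ref{prop:Levitt} with the relative analogue of the Bestvina--Feighn structure theorem for $\R$--trees. First I would record the input hypotheses: by Osin's theorem \cite{Osin06_RelHyp}, since $(\Gamma,\P)$ is relatively hyperbolic, the pair $(\Gamma,\P)$ has a finite relative presentation, and since $\Gamma$ is finitely generated, each $P\in\P$ is finitely generated as well. These are exactly the standing assumptions required to invoke Proposition~\ref{prop:Levitt}.

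Next, apply Proposition~\ref{prop:Levitt} to the hypothesized nontrivial non-nesting action of $\Gamma$ on the topological $\R$--tree $R$ relative to $\P$. This produces a nontrivial isometric action of $\Gamma$ on an actual $\R$--tree $R_0$, again relative to $\P$, with the crucial feature that every subgroup fixing an interval of $R_0$ also fixes an interval of $R$. Combined with the hypothesis that arc stabilizers of $R$ are elementary, this shows that every interval (arc) stabilizer of $R_0$ is elementary. Passing to the unique minimal $\Gamma$--invariant subtree, we may assume the action on $R_0$ is minimal; it remains nontrivial and relative to $\P$.

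Finally, feed this isometric action into the Guirardel--Levitt relative structure theorem \cite{GuirardelLevitt_Splittings}. In a relatively hyperbolic group every elementary subgroup is virtually cyclic or parabolic, hence contains no rank-two free subgroup, so the arc stabilizers of $R_0$ are \emph{small} relative to $\P$ in the sense required there, and the action is suitably stable. The structure theorem then yields a nontrivial graph-of-groups decomposition of $\Gamma$ relative to $\P$ whose relevant edge group is contained in the stabilizer of an arc (or of a line) of $R_0$; each such stabilizer is elementary, so $\Gamma$ splits over an elementary subgroup relative to $\P$.

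The main obstacle I anticipate is purely bookkeeping: verifying precisely that the minimal isometric action on $R_0$ meets all technical hypotheses of the relative Rips machine of \cite{GuirardelLevitt_Splittings} -- in particular that it is genuinely nontrivial relative to $\P$ (no member of $\P$ obstructs the splitting, no global fixed point), that smallness of arc stabilizers is the \emph{relative} smallness used there, and that the axial/``Levitt'' alternatives in the structure theorem still output an elementary edge group rather than something larger. The transfer of the elementary (hence small) arc-stabilizer condition from $R$ to $R_0$ via the final clause of Proposition~\ref{prop:Levitt} is the key point that makes the argument go through cleanly.
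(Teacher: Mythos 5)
Your proposal is correct and follows essentially the same route as the paper: invoke Osin's theorem to get a finite relative presentation with finitely generated peripherals, apply Proposition~\ref{prop:Levitt} to replace the non-nesting topological action by an isometric action on an $\R$--tree with the same interval-stabilizer control (hence elementary arc stabilizers), and then feed this into the Guirardel--Levitt relative analogue of the Bestvina--Feighn structure theorem to obtain a splitting over an elementary subgroup relative to $\P$. The paper treats the corollary as an immediate consequence of exactly these ingredients, so no further comparison is needed.
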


\section{Cut points and local connectedness}
\label{sec:MainThm}

This section contains the proof of the main theorem using the ingredients established above.  Broadly speaking, we follow a strategy analogous to that of Swarup and Bowditch \cite{Swarup96,Bowditch99_Boundaries,Bowditch99_Connectedness,Bowditch01_Peripheral}.  
Various components of Swarup and Bowditch's proofs require significant extension to apply in the general relatively hyperbolic setting.
In this section, we prove Proposition~\ref{prop:Accessibility}, a key accessibility result for splittings over two-ended groups.
Many other such extensions are established in the sections above.
Once we have these extensions in place, the local connectedness proof follows the overall plan of the original proofs due to Swarup and Bowditch.

If $(\Gamma,\P)$ is relatively hyperbolic, let $\mathbb{L}$ be the family of all loxodromic subgroups, \emph{i.e.}, subgroups that are virtually infinite cyclic and not parabolic.

\begin{proposition}
\label{prop:Accessibility}
Let $(\Gamma,\P)$ be relatively hyperbolic such that $\Gamma$ is one ended relative to $\P$.
   \begin{enumerate}
   \item Let $T$ be the tree of cylinders associated to any $(\mathbb{L},\P)$--tree.  Then each vertex stabilizer $\Gamma_v$ is hyperbolic relative to $\Q_v = \Inc_v \cup \P_{|\Gamma_v}$, as defined in Proposition~\ref{prop:RelQCVertex}.
   \item Suppose $\Gamma$ does not split over parabolic subgroups relative to $\P$.
   Then there exists an $(\mathbb{L},\P)$--tree $T_\ell$ that is equal to its tree of cylinders, such that each vertex stabilizer $\Gamma_v$ is finitely generated and does not split over loxodromic subgroups relative to $\Q_v$.
   \end{enumerate}
\end{proposition}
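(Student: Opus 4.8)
The plan is to treat the two parts separately. For part~(1), I would follow the proof of Proposition~\ref{prop: RelQCVertex} almost verbatim. Every loxodromic subgroup is two-ended, hence relatively quasiconvex in $(\Gamma,\P)$ in the sense of Bigdely--Wise, so for any $(\mathbb{L},\P)$--tree the combination lemma \cite[Lem.~4.9]{BigdelyWise13} shows each vertex stabilizer $\Gamma_v$ is relatively quasiconvex and therefore hyperbolic relative to the conjugacy classes in $\Gamma_v$ of the infinite subgroups $\Gamma_v\cap gPg^{-1}$. Since a loxodromic subgroup contains no infinite parabolic subgroup, such an intersection fixes no edge incident to $v$; as the action is relative to $\P$ this forces $gPg^{-1}\subseteq\Gamma_v$, so the peripheral structure obtained is exactly $\P_{|\Gamma_v}$. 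Passing to the tree of cylinders, the incident edge stabilizers $\Inc_v$ form an almost malnormal family of two-ended relatively quasiconvex subgroups of $\Gamma_v$ (distinct incident edges have non-co-elementary stabilizers), so $\Inc_v\cup\P_{|\Gamma_v}$ may be promoted to a peripheral structure, giving that $\Gamma_v$ is also hyperbolic relative to $\Q_v$.

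For part~(2) the strategy is an accessibility argument for splittings over loxodromic subgroups. Since $\Gamma$ does not split over a finite or parabolic subgroup relative to $\P$, Theorem~\ref{thm:OsinFinGen} shows that $\Gamma$ and every $P\in\P$ are finitely generated and that $\Gamma$ is one ended relative to $\P$. The key observation is that the tree of cylinders $T_c$ of any $(\mathbb{L},\P)$--tree is again an $(\mathbb{L},\P)$--tree with the same nonelementary vertex stabilizers, and is uniformly acylindrical: the stabilizer of a cylinder is contained in a maximal loxodromic subgroup, and two distinct maximal loxodromic subgroups intersect in a finite group, so the stabilizer of any edge-path of length at least three in $T_c$ is finite. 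Using one-endedness relative to $\P$, relative acylindrical accessibility, in the spirit of Sela and Weidmann and as developed in \cite{GuirardelLevitt_JSJ}, then bounds the number of edge orbits of the tree of cylinders of an $(\mathbb{L},\P)$--tree by a constant depending only on $(\Gamma,\P)$. Consequently the $(\mathbb{L},\P)$--trees, ordered by refinement, contain a maximal element $T_\ell$.

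Granting such a maximal $T_\ell$, the two asserted properties follow as in the proof of Proposition~\ref{prop:JSJfinitelygenerated}. If some vertex group $\Gamma_v$ split over a loxodromic subgroup relative to $\Q_v$, then blowing up the orbit of $v$ using this splitting---legitimate since the members of $\Inc_v$ are elliptic in it, and harmless for the peripherals since $\P_{|\Gamma_v}\subseteq\Q_v$---would produce a strictly larger $(\mathbb{L},\P)$--tree, a contradiction; hence no $\Gamma_v$ splits over a loxodromic subgroup relative to $\Q_v$. The same blow-up, applied to a hypothetical splitting of $\Gamma_v$ over a finite or parabolic subgroup relative to $\Q_v$ and followed by collapsing all loxodromic edges, would yield a splitting of $\Gamma$ over a finite or parabolic subgroup relative to $\P$, contrary to hypothesis; therefore $\Gamma_v$ does not split over a finite or parabolic subgroup relative to $\Q_v$, and so it is finitely generated by Theorem~\ref{thm:OsinFinGen}.

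I expect the accessibility step to be the main obstacle. Since $\Gamma$ is only relatively finitely presented, Bestvina--Feighn accessibility \cite{BestvinaFeighn_RipsMachine} does not apply directly, so one must either check the hypotheses of a relative acylindrical accessibility theorem or run the limiting argument by hand: pass to a limit action on an $\R$--tree of a sequence of $(\mathbb{L},\P)$--trees of increasing complexity, verify that it is minimal, nontrivial, non-nesting (automatic for isometric actions), relative to $\P$ (using that each $P$ is finitely generated) and has elementary arc stabilizers (from the acylindricity above), and then invoke Corollary~\ref{cor: relativerips thm} to derive a contradiction. The delicate points inside this argument are that flexible, quadratically hanging vertex groups must be shown to contribute only finitely---they decompose along a maximal, hence finite, curve system whose pieces are small and split no further---and that passing to trees of cylinders, which is not monotone under refinement, must be reconciled with the complexity count.
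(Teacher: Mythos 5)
Part~(1) of your proposal is essentially the paper's argument: relative quasiconvexity of the vertex groups via Bigdely--Wise gives hyperbolicity relative to $\P_{|\Gamma_v}$, and the incident two-ended groups are then promoted into the peripheral structure (the paper does this by noting the members of $\Inc_v$ are maximal loxodromic in $(\Gamma_v,\P_{|\Gamma_v})$ and citing Osin \cite{Osin06_Elementary}; your almost-malnormality formulation is the same idea). That part is fine.

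Part~(2) is where you genuinely diverge, and it contains a real gap: the existence of a maximal $(\mathbb{L},\P)$--tree is exactly the hard content of the statement, and your argument does not deliver it. A bound on the number of edge orbits coming from acylindrical accessibility applies (at best) to trees of cylinders, not to arbitrary $(\mathbb{L},\P)$--trees, which can have unboundedly many edge orbits; and the refinement process you need to terminate does not stay inside the acylindrical class, since a refinement of a tree of cylinders is not a tree of cylinders and passing back to the tree of cylinders can strictly decrease the edge count. So the loop ``refine, then take $T_c$'' need not terminate, and no maximal element is produced; this is precisely the reconciliation you flag as unresolved, together with the treatment of flexible (QH) vertices. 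Accessibility over two-ended subgroups is known to require more than finite generation in general, so ``in the spirit of Sela--Weidmann'' cannot be waved through for a group that is only relatively finitely presented. The paper sidesteps all of this: under the no-finite-or-parabolic-splitting hypothesis, every $(\mathbb{E},\P)$--tree is an $(\mathbb{L},\P)$--tree, and Guirardel--Levitt's relative JSJ tree of cylinders \cite[Cor.~9.20]{GuirardelLevitt_JSJ} (valid for finitely generated $\Gamma$, no finite presentability needed) already has the property that each vertex group either does not split over loxodromics relative to $\Q_v$ or is a finite-by-orbifold group; in the latter case Morgan--Shalen \cite{MorganShalen84} identifies all such splittings with curve systems, and a finite Euler-characteristic argument lets one refine along a maximal family of geodesics so that the resulting pieces admit no further essential curves. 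Finite generation of the vertex groups then follows since they are hyperbolic relative to finitely generated peripherals. If you want to salvage your route, you would need to prove a genuine relative accessibility theorem for $(\mathbb{L},\P)$--trees of a relatively finitely presented pair (or run the Rips/limiting argument you sketch to completion, including the QH analysis); as written, the maximality step is assumed rather than established.
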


\begin{proof}
The first assertion is given by Proposition~\ref{prop:RelQCVertex}.
We now consider the second assertion.
We may assume $\Gamma$ is nonelementary, since the result is trivial in the elementary case.
Let $\mathbb{E}$ be the family of all elementary subgroups of $(\Gamma,\P)$. By hypothesis, any $(\mathbb{E},\P)$--tree is actually an $(\mathbb{L},\P)$--tree.
Furthermore, $\Gamma$ and all members of $\P$ are finitely generated by Theorem~\ref{thm:OsinFinGen}.
The JSJ tree of cylinders over $\mathbb{E}$ relative to $\P$ given by Guirardel--Levitt \cite[Cor.~9.20]{GuirardelLevitt_JSJ} is an $(\mathbb{L},\P)$--tree $T$ with the following property.
If $v$ is any vertex of $T$, either $\Gamma_v$ has no splittings over loxodromic subgroups relative to $\Q_v$ or $\Gamma_v$ is an extension with finite kernel of the fundamental group of a compact hyperbolic $2$--orbifold $\Sigma_v$ with geodesic boundary and the subgroups $\Q_v$ are the lifts to $\Gamma_v$ of the fundamental groups of the boundary components of $\Sigma_v$.
In the orbifold case, every loxodromic splitting of $\Gamma_v$ relative to $\Q_v$ is induced by a splitting of $\pi_1(\Sigma)$ dual to a family of disjoint, essential simple closed geodesics of $\Sigma_v$ by Morgan--Shalen \cite[Thm.~III.2.6]{MorganShalen84}.
A standard argument involving reduction of Euler characteristic shows that $\Sigma_v$ may be cut along some finite family of geodesics into pieces containing no essential simple closed geodesic.
Thus, we may refine $T$ using splittings of its vertex groups to produce an $(\mathbb{L},\P)$--tree $T_\ell$ each of whose vertex groups $\Gamma_v$ does not split relative to $\Q_v$.

Finally we observe that each vertex stabilizer $\Gamma_v$ of $T_\ell$ is finitely generated.
Indeed, $\Gamma_v$ is hyperbolic relative to $\mathbb{Q}_v$, but each member of $\P_{|\Gamma_v}$ is conjugate in $\Gamma$ to a member of $\P$ and each member of $\Inc_v$ is virtually cyclic.
In particular, $\Gamma_v$ is hyperbolic relative to finitely generated subgroups, which implies that $\Gamma_v$ is finitely generated (see \cite{Osin06_RelHyp,GerasimovPotyagailo15}).
\end{proof}

For the next two lemmas, we work under the following assumptions.
Let $(\Gamma,\P)$ be relatively hyperbolic.
Assume $\Gamma$ does not split relative to $\P$ over a finite or parabolic subgroup.
We note that, by Proposition~\ref{prop: stallings} the boundary $M=\boundary(\Gamma,\P)$ is connected.
Furthermore, by Theorem~\ref{thm:OsinFinGen} the group $\Gamma$ is finitely generated.
Suppose the boundary $M$ has a nonparabolic cut point $c$ with orbit $C=\Gamma (c)$ in $M$.
Let $R$ be the topological $\R$--tree obtained by removing all terminal points from the dendrite $D(M,C)$.

\begin{lemma}
\label{lem: twoendedfix}
Suppose $\Gamma$ splits over a loxodromic subgroup $H$ relative to $\P$.
Then the induced action of $H$ on the topological $\R$--tree $R$ has a fixed point.
\end{lemma}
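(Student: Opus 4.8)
The plan is to argue by contradiction: suppose $H$ acts on $R$ without a fixed point. Since $H$ is loxodromic, it is virtually infinite cyclic, so it contains an infinite-order element $h$ and $H$ is finitely generated. By Proposition~\ref{prop:Levitt} (or directly by \cite[Thm.~3]{Levitt98}, since an infinite cyclic or virtually cyclic group is certainly finitely presented) a finitely generated group acting on a topological $\R$--tree without a fixed point must contain an element acting hyperbolically; after passing to a power we may assume $h$ itself acts hyperbolically on $R$ with an axis $L$, a properly embedded line on which $h$ acts topologically conjugately to a translation. Since $R = D(M,C)^o$ sits inside the dendrite $D = D(M,C)$, the closure $\bar L$ in $D$ is an arc $[\xi_-,\xi_+]$ with distinct endpoints, each fixed by $h$ and terminal in $D$.

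The next step is to transfer this picture back to the boundary $M$. The dendrite quotient map $q\colon M \to D$ is a $\Gamma$--equivariant closed surjection, and in particular $h$ acts on $M$. Since $(\Gamma,\P)$ is relatively hyperbolic, $h$ is a loxodromic element of the convergence group $\Gamma \curvearrowright M$, hence has exactly two fixed points $a,b \in M$, with every nonempty closed $h$--invariant subset of $M$ meeting $\{a,b\}$. Now $q^{-1}(\xi_-)$ and $q^{-1}(\xi_+)$ are disjoint nonempty closed $h$--invariant subsets of $M$, which forces (after relabeling) $a \in q^{-1}(\xi_-)$ and $b \in q^{-1}(\xi_+)$, i.e. $q(a) = \xi_-$ and $q(b) = \xi_+$ are the two endpoints of the nondegenerate arc $\bar L$. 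By the definition of the dendrite quotient (Definition~\ref{def:DendriteQuotient}) this means the set of cut points of $C$ separating $a$ from $b$ contains a subset order-isomorphic to $\mathbb Q$; in particular it is infinite, so $a$ and $b$ are separated by some point $c_0 \in C$. This is exactly the situation of Lemma~\ref{lem:SwensonLoxodromic}(1): in the cut point pretree $K$ dual to $C$, the classes $[a]$ and $[b]$ are terminal and are the only fixed points of $h$ in $K$.

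Here is where the contradiction must come from, and this is the step I expect to be the crux. On the one hand, $h$ has a hyperbolic axis $L$ in $R = D^o$, so $h$ fixes no point of $R$ and hence no point of the interior of $D = K_{\alpha_0}$; the only $h$--fixed points of $D$ are the terminal endpoints $\xi_\pm$ of $\bar L$. On the other hand, since the pretree map $K \to K_{\alpha_0} = D$ is surjective and equivariant and $[a],[b]$ are the only $h$--fixed points of $K$, their images $q(a)=\xi_-$ and $q(b)=\xi_+$ account for the two ends of the axis — so far consistent. The obstruction I want to exploit instead is that $H$, being virtually cyclic with infinite cyclic subgroup $\langle h\rangle$ of finite index, should also fix the axis $L$ setwise (any element of $H$ commutes with a power of $h$ and hence preserves $\Fix$ of that power, which by Lemma~\ref{lem:SwensonLoxodromic} pins down the two ends of $L$), so $H$ acts on $L \cong \R$ by homeomorphisms preserving its two ends; a virtually cyclic group acting on a line preserving both ends has an index-at-most-two subgroup of orientation-preserving homeomorphisms, and any finite-order orientation-preserving homeomorphism of $\R$ is trivial — wait, that only shows the action is faithful on $\langle h\rangle$, not a contradiction. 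The actual contradiction should instead be obtained via the \emph{non-nesting} hypothesis: $\Gamma$, and a fortiori $H$, acts on $R$ by non-nesting homeomorphisms (this is part of Proposition~\ref{prop: reldendrite} / Corollary~\ref{cor:RelHypDendrite}, which applies since $\Gamma$ does not split over a finite or parabolic subgroup relative to $\P$). A non-nesting hyperbolic action of $\langle h \rangle$ on a line already contradicts nothing; what does the work is combining non-nesting with the fact that $h$'s two fixed points in $M$ are \emph{separated by a cut point of $C$}. By Lemma~\ref{lem:SwensonLoxodromic}(1), since $[a]$ and $[b]$ are terminal in $K$ and are the only $h$--fixed points, passing to the quotient $D$ and removing terminal points (to get $R$) \emph{removes both $h$--fixed ends}, so $h$ has no fixed point in $R$ — which is consistent with $h$ being hyperbolic. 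The genuine contradiction: a hyperbolic element of a non-nesting action on an $\R$--tree that is a quotient of a dendrite dual to $C$ must have its axis with endpoints \emph{not separated by a point of $C$} (this is forced by the structure of $D$: an arc of $D$ whose endpoints are terminal and separated by a point of $c \in C$ would, in $K$, pass through the singleton class $\{c\}$, which has finite interval relation issues — more precisely, Lemma~\ref{lem:SwensonLoxodromic}(1) says $h$ fixes \emph{exactly} $[a]$ and $[b]$ in $K$ with \emph{empty} open interval between them in the relevant sense, so the axis $\bar L$ in $D$ collapses to something of bounded length, contradicting that $L$ is a properly embedded line). Thus the assumption that $H$ has no fixed point in $R$ is untenable, and $H$ fixes a point of $R$.

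I would write the final draft by isolating the clean statement "if $h \in \Gamma$ is loxodromic and acts hyperbolically on $R$, then its fixed points in $M$ are not separated by any point of $C$" as the pivotal observation, proving it from Lemma~\ref{lem:SwensonLoxodromic}(1) and the fact that $R$ retains only the interior of the dendrite, and then deriving the lemma: $h \in H$ is loxodromic with fixed points separated by a cut point (as shown above), so $h$ cannot act hyperbolically on $R$, hence (by Levitt, using that $H$ is finitely generated) $H$ fixes a point of $R$.
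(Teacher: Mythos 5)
There is a genuine gap, located exactly where you say you expect the crux to be. Your proposed pivotal observation --- that a loxodromic $h$ acting hyperbolically on $R$ must have its two fixed points in $M$ not separated by any point of $C$ --- is false, and in fact the opposite phenomenon is exploited elsewhere in the paper: in the proof of Proposition~\ref{prop:CutPointPretree}, loxodromic elements whose fixed points \emph{are} separated by a cut point of $C$ are shown (via Lemma~\ref{lem:SwensonLoxodromic}(1) together with Levitt's theorem) to have no fixed point in the tree and hence to act hyperbolically with an axis. Your attempted justification misreads Lemma~\ref{lem:SwensonLoxodromic}: the ``empty (or one-point) open interval'' conclusion belongs to part~(2), which applies only when the fixed points are \emph{not} separated by $C$; in the situation of part~(1) the interval $\bigl([a],[b]\bigr)$ contains every cut point of $C$ separating $a$ from $b$, the axis does not collapse, and nothing prevents $h$ from translating along it. So after you (correctly) transfer the picture back to $M$ and conclude that $a$ and $b$ are separated by a point of $C$, no contradiction follows --- that configuration is perfectly consistent with $H$ acting on $R$ without a fixed point.

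The underlying problem is that your argument never uses the hypothesis that $\Gamma$ \emph{splits} over $H$ relative to $\P$; it only uses that $H$ is two-ended and nonparabolic, and with only that hypothesis the statement is false (the loxodromic elements just mentioned give counterexamples whenever $C$ is nonempty). The paper's proof brings in the splitting through an external input: by a result of Haulmark--Hruska \cite[Cor.~6.8]{HaulmarkHruska_Canonical}, the limit set $\Lambda H$ of the edge group of such a splitting is not separated by any cut point of $M$. Consequently the quotient map $M\to D$ collapses $\Lambda H$ to a single point $z$, which is fixed by $H$; convergence dynamics (using that $D$ is $T_1$, so point preimages in $M$ are closed and $h$--invariant) shows $z$ is the \emph{unique} fixed point in $D$ of an infinite order $h\in H$, and then Levitt's theorem forces $z$ to lie in the interior $R$, since otherwise $h$ would act hyperbolically on $R$ and fix two terminal points of $D$, contradicting uniqueness. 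Any repair of your argument needs this (or an equivalent) use of the splitting hypothesis to rule out separation of $\Lambda H$ by cut points.
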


\begin{proof}
Recall that the dendrite $D$ is formed from $M$ by identifying two points $x$ and $y$ if the collection of points of $C$ separating them does not contain a subset order-isomorphic to the rational numbers.
The action of $\Gamma$ on $M$ is a convergence group action, and, by Proposition~\ref{prop: reldendrite}, the induced action of $\Gamma$ on $D$ is also a convergence group action.

By a result of Haulmark--Hruska about splittings of relatively hyperbolic groups over elementary subgroups \cite[Cor.~6.8]{HaulmarkHruska_Canonical}, the two-point limit set $\Lambda H$ is not separated by any cut point of $M$.
In particular, the quotient map $M\to D$ sends $\Lambda H$ to a single point $z$ fixed by $\Gamma_v$.
Let $h \in H$ be an infinite order element.
If $h$ had another fixed point $w\in D$, its preimage in $M$ would be a closed $h$--invariant set not intersecting the limit set of $\langle h \rangle$ in $M$, which is impossible in a convergence group action (see \cite[\S 2]{Tukia94}). Here, we use that $D$ is a $T_1$--space, so that the preimage of $w$ in $M$ is closed.
Thus $h$ has $z$ as its unique fixed point in $D$.

We claim that $z$ lies in the interior $R$.
Recall that the action of $\Gamma$ on $R$ is non-nesting by 
Proposition~\ref{prop: reldendrite}.
As in the proof of Proposition~\ref{prop:CutPointPretree}, if the fixed point $z$ of $h$ does not lie in $R$, then \cite[Thm.~3]{Levitt98} implies that $h$ fixes a pair of terminal points of $D$ that are the endpoints of an axis.
Since $z$ is the unique fixed point of $h$ in $D$, we conclude that $z$ must lie in $R$.
\end{proof}

\begin{lemma}
\label{lem:RigidFix}
Let $T_\ell$ be an $(\mathbb{L},\P)$--tree
as in Proposition~\ref{prop:Accessibility} such that each vertex group $\Gamma_v$ does not split over loxodromic subgroups relative to $\Q_v$.
Then the induced action of each $\Gamma_v$ on the topological $\R$--tree $R$ has a fixed point.
\end{lemma}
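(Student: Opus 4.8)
The plan is to show that $\Gamma_v$ acting on $R$ has a fixed point by contradiction: if not, we will produce a splitting of $\Gamma_v$ over a loxodromic subgroup relative to $\Q_v$, contradicting the choice of $T_\ell$. First I would record what is available: by Proposition~\ref{prop:Accessibility}, each $(\Gamma_v,\Q_v)$ is relatively hyperbolic with $\Gamma_v$ finitely generated, and by Theorem~\ref{thm:OsinFinGen} (applied to $(\Gamma_v,\Q_v)$, which does not split over finite or parabolic subgroups relative to $\Q_v$ since $\Gamma$ does not split relative to $\P$ over finite or parabolic subgroups) each member of $\Q_v$ is finitely generated. Thus $(\Gamma_v,\Q_v)$ satisfies the hypotheses of Corollary~\ref{cor: relativerips thm}: $\Gamma_v$ is finitely generated and $(\Gamma_v,\Q_v)$ is relatively hyperbolic.

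Next I would verify that the action of $\Gamma_v$ on $R$ is a nontrivial, non-nesting action relative to $\Q_v$ with elementary arc stabilizers, so that Corollary~\ref{cor: relativerips thm} applies and yields a splitting of $\Gamma_v$ over an elementary subgroup relative to $\Q_v$. Non-nesting of the $\Gamma$-action on $R$, hence of the restricted $\Gamma_v$-action, is immediate from Corollary~\ref{cor:RelHypDendrite} (equivalently Proposition~\ref{prop: reldendrite}), which also gives that interval stabilizers in $R$ (under $\Gamma$, hence under $\Gamma_v$) are finite. That the $\Gamma_v$-action is relative to $\Q_v$ requires: each member of $\P_{|\Gamma_v}$ fixes a point of $R$ (it is parabolic in the convergence action on $M$, so it fixes its parabolic point, which maps to a point of $D$; by the argument in Proposition~\ref{prop:CutPointPretree}/Lemma~\ref{lem: twoendedfix} that fixed point lies in $R$), and each member of $\Inc_v$ — a loxodromic subgroup over which $\Gamma$, hence $\Gamma_v$, splits relative to $\P$ — fixes a point of $R$ by Lemma~\ref{lem: twoendedfix}. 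Elementarity of arc stabilizers follows since finite subgroups are elementary. So the remaining hypothesis of Corollary~\ref{cor: relativerips thm} to check is \emph{nontriviality}: the action of $\Gamma_v$ on $R$ has no global fixed point — which is precisely what we are assuming fails for the contradiction, so under the contrary assumption Corollary~\ref{cor: relativerips thm} produces a splitting of $\Gamma_v$ over an elementary subgroup relative to $\Q_v$.

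Finally I would identify the type of this elementary splitting. Since all arc (interval) stabilizers in $R$ are finite by Corollary~\ref{cor:RelHypDendrite}, the edge group of the splitting produced by Corollary~\ref{cor: relativerips thm} — which stabilizes an arc of $R$, or more precisely whose fixed set in $R$ is an arc in the Bestvina--Feighn analysis used inside Corollary~\ref{cor: relativerips thm} — cannot be infinite parabolic, and if it were finite $\Gamma_v$ would split over a finite subgroup relative to $\Q_v$; but $(\Gamma_v,\Q_v)$ does not split over finite or parabolic subgroups relative to $\Q_v$ (this is the defining property of the JSJ vertex groups; alternatively one notes that a finite splitting of $\Gamma_v$ relative to $\Q_v$ would, by relative accessibility, be incompatible with $\Gamma$ not splitting relative to $\P$ over a finite subgroup). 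Hence the edge group is loxodromic, and $\Gamma_v$ splits over a loxodromic subgroup relative to $\Q_v$ — contradicting the choice of $T_\ell$ in Proposition~\ref{prop:Accessibility}, whose vertex groups do not split over loxodromic subgroups relative to $\Q_v$. Therefore $\Gamma_v$ fixes a point of $R$. The main obstacle I anticipate is the bookkeeping in the previous paragraph: carefully confirming that the $\Gamma_v$-action on $R$ is genuinely relative to the \emph{full} family $\Q_v=\Inc_v\cup\P_{|\Gamma_v}$ (both the parabolic pieces and the incident loxodromic pieces must be handled, the latter via Lemma~\ref{lem: twoendedfix}), and pinning down that the elementary edge group emerging from Corollary~\ref{cor: relativerips thm} is loxodromic rather than finite or parabolic, using finiteness of interval stabilizers together with the non-splitting hypotheses.
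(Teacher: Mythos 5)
Your proposal is correct and follows essentially the same route as the paper: restrict the action on $R$ to $\Gamma_v$, verify it is non-nesting and relative to $\Q_v$ (using Corollary~\ref{cor:RelHypDendrite} for $\P_{|\Gamma_v}$ and Lemma~\ref{lem: twoendedfix} for $\Inc_v$) with finite interval stabilizers, apply Corollary~\ref{cor: relativerips thm}, and rule out finite or parabolic edge groups because such a splitting of $\Gamma_v$ relative to $\Q_v$ would extend to a splitting of $\Gamma$ relative to $\P$. The only caveat is that the exclusion of finite/parabolic splittings is not a stated ``defining property'' of the vertex groups of $T_\ell$ in Proposition~\ref{prop:Accessibility}; it is exactly the extension argument you give as your alternative, which is what the paper uses.
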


\begin{proof}
Recall that $\Gamma_v$ is finitely generated and hyperbolic relative to $\Q_v = \Inc_v \cup \P_{|\Gamma_v}$.
By Proposition~\ref{prop: reldendrite}, the action of $\Gamma$ on $R$ is relative to $\P$, so that the restricted action of $\Gamma_v$ on $R$ is relative to $\P_{|\Gamma_v}$.
The subgroups $\Inc_v$ are also finitely generated, since they are virtually cyclic.
By Lemma~\ref{lem: twoendedfix}, the action on $R$ is also relative to the family of loxodromic edge stabilizers $\Inc_v$.

If the action of $\Gamma_v$ on $R$ had no fixed point, it would be a nontrivial non-nesting action relative to $\Q_v$ such that every interval stabilized by $\Gamma_v$ has a finite stabilizer by Proposition~\ref{prop: reldendrite}.
But then $\Gamma_v$ would split over an elementary subgroup relative to $\Q_v$ by Corollary~\ref{cor: relativerips thm}.
Any splitting of $\Gamma_v$ relative to $\Q_v$ over a finite or parabolic subgroup extends to a splitting of $\Gamma$ relative to $\P$ over finite or parabolic subgroups, so $\Gamma_v$ has no such splitting.
Thus $\Gamma_v$ must split over a loxodromic subgroup relative to $\Q_v$, contradicting the choice of $T_\ell$.
\end{proof}

\begin{proposition}
\label{prop:NoSplitCutPoint}
Let $(\Gamma,\P)$ be relatively hyperbolic.
Suppose $\Gamma$ does not split over a finite or parabolic subgroup relative to $\P$.
Then $M=\boundary(\Gamma,\P)$ is connected and every cut point of $M$ is a parabolic point.
\end{proposition}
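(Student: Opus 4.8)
The plan is to dispose of connectedness immediately and then rule out a nonparabolic cut point by a contradiction that runs the dendrite quotient of the boundary through the relative Rips machine and back into the accessibility results of this section.

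We may assume $(\Gamma,\P)$ is nonelementary, the elementary case being trivial (since $\P$ is nonempty, $\Gamma$ is then parabolic and $M$ is a single point). Because $\Gamma$ does not split over a finite subgroup relative to $\P$, the pair is one ended relative to $\P$, so $M$ is connected by Proposition~\ref{prop: stallings}, and $\Gamma$ is finitely generated by Theorem~\ref{thm:OsinFinGen}. It remains to show every cut point of $M$ is parabolic, and I would argue by contradiction: suppose $M$ has a nonparabolic cut point $c$, and set $C=\Gamma(c)$, so every point of $C$ is again a nonparabolic cut point. Let $D=D(M,C)$ be the dendrite quotient and $R$ its interior. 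Since $\Gamma$ does not split relative to $\P$ over a finite or parabolic subgroup, Corollary~\ref{cor:RelHypDendrite} applies: the action of $\Gamma$ on $D$ is a convergence group action with no global fixed point, and the action on the topological $\R$--tree $R$ is non-nesting, relative to $\P$, with finite (hence elementary) interval stabilizers. In particular the action on $R$ is nontrivial, since $\Gamma$ fixes no point of $D\supseteq R$.

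Next I would apply the relative Rips machine. Corollary~\ref{cor: relativerips thm} applies to the nontrivial, non-nesting action of the finitely generated group $\Gamma$ on $R$ relative to $\P$ with elementary arc stabilizers, and produces a splitting of $\Gamma$ over an elementary subgroup relative to $\P$. By hypothesis that subgroup is neither finite nor parabolic, so $\Gamma$ splits over a loxodromic subgroup relative to $\P$. This is then fed back into the accessibility picture of Proposition~\ref{prop:Accessibility}(2), which provides an $(\mathbb{L},\P)$--tree $T_\ell$ whose vertex stabilizers $\Gamma_v$ are finitely generated and do not split over loxodromic subgroups relative to $\Q_v$. If $T_\ell$ were trivial then $\Gamma=\Gamma_v$ for its single vertex, and Lemma~\ref{lem:RigidFix} would give $\Gamma$ a fixed point in $R\subseteq D$, contradicting Corollary~\ref{cor:RelHypDendrite}; hence $T_\ell$ has an edge.

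For each edge $e$ of $T_\ell$ the loxodromic stabilizer $\Gamma_e$ fixes a point of $R$ by Lemma~\ref{lem: twoendedfix}; in fact, as the proof of that lemma shows via the convergence dynamics on $D$, an infinite order element of $\Gamma_e$ has a unique fixed point in $D$, so $\Gamma_e$ fixes a unique point $z_e\in R$. By Lemma~\ref{lem:RigidFix}, each $\Gamma_v$ fixes a point $p_v\in R$, and if $e$ is incident to $v$ then $\Gamma_e\le\Gamma_v$ fixes $p_v$, forcing $p_v=z_e$. Running this over all edges and using that $T_\ell$ is connected, all the $p_v$ coincide with a single point $z\in R$ fixed by every vertex stabilizer of $T_\ell$; since $e\mapsto z_e=z$ is $\Gamma$--equivariant, $z$ is fixed by $\Gamma$, contradicting the fact that $\Gamma$ acts on $D$ without a fixed point (Corollary~\ref{cor:RelHypDendrite}). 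Thus $M$ has no nonparabolic cut point. I expect the hard part to be exactly this last step: promoting the purely local data of Lemma~\ref{lem:RigidFix} --- each vertex group fixes \emph{some} point of $R$ --- to a single global fixed point. This succeeds only because loxodromic edge stabilizers are rigid on $D$, fixing exactly one point, a rigidity that rests on the convergence dynamics of $D$ and the finiteness of interval stabilizers from Proposition~\ref{prop: reldendrite}.
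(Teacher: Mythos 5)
Your proof is correct and follows essentially the same route as the paper: connectedness and finite generation from Proposition~\ref{prop: stallings} and Theorem~\ref{thm:OsinFinGen}, then a contradiction via the dendrite quotient (Corollary~\ref{cor:RelHypDendrite}), the tree $T_\ell$ of Proposition~\ref{prop:Accessibility}, Lemma~\ref{lem:RigidFix} for vertex fixed points in $R$, and the infiniteness of the loxodromic edge stabilizers to force all these fixed points to coincide into a global $\Gamma$--fixed point. Your extra opening application of Corollary~\ref{cor: relativerips thm} to $\Gamma$ itself and your use of convergence dynamics (rather than the finiteness of interval stabilizers) to pin down the edge groups' unique fixed point are harmless variants of steps the paper absorbs into Lemma~\ref{lem:RigidFix} and the finite-interval-stabilizer argument.
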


\begin{proof}
By Proposition~\ref{prop: stallings} and Theorem~\ref{thm:OsinFinGen}, the space $M$ is connected and the group $\Gamma$ is finitely generated.
We suppose, by way of contradiction that $M$ has a nonparabolic cut point $c$ with orbit $C=\Gamma (c)$.
Let $D=D(M,C)$ be the dendrite quotient of $M$ given by Proposition~\ref{prop: reldendrite}.
If $R$ is the topological $\R$--tree obtained from $D$ by removing all terminal points, then each interval of $R$ is stabilized by a finite subgroup of $\Gamma$.
In particular, an infinite subgroup of $\Gamma$ cannot fix more than one point of $R$.

Consider the simplicial $(\mathbb{L},\P)$--tree $T_\ell$ given by Proposition~\ref{prop:Accessibility}. For each vertex $v$ of $T_\ell$, the stabilizer $\Gamma_v$ has a fixed point $p_v$ in $R$.  Since $\Gamma_v$ is infinite, its fixed point $p_v$ must be unique.
For any edge $e$ of $T_\ell$ joining vertices $w$ and $w$, the group $\Gamma_e$ stabilizes the arc $[p_v,p_w]$ in $R$.
Since $\Gamma_e$ is also infinite, it follows that the points $p_v$ and $p_w$ fixed by $\Gamma_v$ and $\Gamma_w$ must be equal, so all vertex groups have the same unique fixed point $p$ in $R$.
Since $p = \Fix(\Gamma_{g(v)}) = \Fix( g \Gamma_v g^{-1}) = g(p)$ for any $g \in \Gamma$ and any vertex $v$, the action on $R$ leaves $p$ fixed.
This conclusion contradicts Proposition~\ref{prop: reldendrite}, which states that $\Gamma$ does not have a fixed point in the dendrite $D$.
\end{proof}

Recall that a \emph{Peano continuum} is a compact, connected, locally connected, metrizable space.
Extending a result of Bestvina--Mess \cite{BestvinaMess91}, Bowditch shows in \cite[\S 9]{Bowditch01_Peripheral} that a relatively hyperbolic group pair satisfying the conclusion of  Proposition~\ref{prop:NoSplitCutPoint} has locally connected boundary. 
We provide details for the benefit of the reader.

\begin{proposition}
\label{prop:PeanoNoCutPoint}
Let $(\Gamma,\P)$ be relatively hyperbolic.
Suppose $\Gamma$ does not split over a finite or parabolic subgroup relative to $\P$.
Then $M=\boundary(\Gamma,\P)$ is a Peano continuum without cut points.
\end{proposition}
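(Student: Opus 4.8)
The plan is to reduce to the finitely generated case and to combine the absence of non-parabolic cut points established in Proposition~\ref{prop:NoSplitCutPoint} with Bowditch's peripheral splitting theorem and his local connectedness criterion. First I would dispose of the elementary case, in which $M$ has at most two points and the conclusion is immediate, and assume henceforth that $(\Gamma,\P)$ is nonelementary. Since $\Gamma$ does not split relative to $\P$ over a finite subgroup, $\Gamma$ is one ended relative to $\P$, so $M$ is connected by Proposition~\ref{prop: stallings}. Since $\Gamma$ does not split relative to $\P$ over a finite or parabolic subgroup, Theorem~\ref{thm:OsinFinGen} shows that $\Gamma$ is finitely generated, hence countable, so $M$ is metrizable by \cite{Gerasimov09}; thus $M$ is a continuum and the action of $\Gamma$ on it is a minimal convergence group action, indeed a $32$--action of the pair $(\Gamma,\P)$.

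By Proposition~\ref{prop:NoSplitCutPoint}, every cut point of $M$ is a parabolic point, so it remains to rule out parabolic cut points. Suppose, for contradiction, that $M$ has a cut point; then it has a parabolic cut point $p$, fixed by a maximal parabolic subgroup which, after conjugation, we may take to be some $P \in \P$, and $C = \Gamma(p)$ is a $\Gamma$--invariant family of parabolic cut points. Here I would invoke Bowditch's analysis of parabolic cut points \cite{Bowditch01_Peripheral}, which applies because $\Gamma$ is finitely generated: from the bipartite adjacency pattern whose vertices are the points of $C$ and the maximal subcontinua of $M$ not separated by any point of $C$ --- equivalently, from the cut point tree dual to $C$ --- one extracts a nontrivial simplicial splitting of $\Gamma$ relative to $\P$, without a global fixed vertex (because $p$ is a genuine cut point) and with every edge stabilizer contained in a conjugate of $P$, hence parabolic. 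This contradicts the hypothesis, so $M$ has no cut points.

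It remains to prove that $M$ is locally connected. Here I would use the criterion, going back to Bowditch \cite{Bowditch99_Connectedness,Bowditch99_Boundaries}, that a continuum carrying a minimal convergence group action and having no global cut point is locally connected. Following the Bestvina--Mess strategy, one realizes $M$ as the Gromov boundary of the cusped space $X$ of $(\Gamma,\P)$, a locally finite $\delta$--hyperbolic graph on which $\Gamma$ acts cocompactly away from its horoballs \cite{GrovesManning08}; a hypothetical failure of local connectedness of $\partial X$ produces, by a limiting argument exploiting the action, arbitrarily small subsets of $M$ that locally separate it, and the absence of cut points rules these out through the convergence property. Combining these facts, $M$ is compact, connected, locally connected, and metrizable --- a Peano continuum --- with no cut points.

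The main obstacle is that the cited results of Bowditch were originally established only for finitely presented $\Gamma$ whose peripheral subgroups are one or two ended, finitely presented, and free of infinite torsion. The point is that, once $\Gamma$ does not split over a finite or parabolic subgroup relative to $\P$, Theorem~\ref{thm:OsinFinGen} restores finite generation of $\Gamma$ and of every $P \in \P$, and neither the construction of a peripheral splitting from a parabolic cut point nor the local connectedness argument actually uses finite presentability or the endedness or torsion hypotheses on the peripheral subgroups; those hypotheses entered Bowditch's program solely in order to rule out non-parabolic cut points, a task now accomplished by Proposition~\ref{prop:NoSplitCutPoint}.
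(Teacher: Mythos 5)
Your reductions (elementary case, connectedness via Proposition~\ref{prop: stallings}, finite generation and metrizability via Theorem~\ref{thm:OsinFinGen}, and the use of Proposition~\ref{prop:NoSplitCutPoint} to see that any cut point is parabolic) agree with the paper. But the order of the two remaining steps is where your argument breaks: you propose to rule out parabolic cut points \emph{before} proving local connectedness, by extracting a nontrivial simplicial splitting over parabolic subgroups directly from the bipartite pattern of the orbit $C=\Gamma(p)$ and the maximal subcontinua it does not separate. That extraction is not available at this stage. In Bowditch's work the implication ``parabolic cut point $\Rightarrow$ peripheral splitting'' is established only for boundaries that are already Peano continua: it runs through \cite[Prop.~5.1]{Bowditch99_Boundaries} (a parabolic cut point of a \emph{locally connected} boundary yields a separating horoball) together with \cite[Thm.~1.2]{Bowditch99_Boundaries}, and the splitting results of \cite{Bowditch01_Peripheral} likewise assume local connectedness. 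Without local connectedness the cut point pretree dual to $C$ need not be discrete, so your ``bipartite adjacency pattern'' need not be a simplicial tree with finitely many edge orbits, and there is no Bass--Serre splitting to contradict the hypothesis. Nor can you substitute the dendrite machinery of Sections~\ref{sec:CutPointPretree}--\ref{sec:  minimalcodense} of this paper: Proposition~\ref{prop:CutPointPretree} and Lemma~\ref{lem:K1Nontrivial} use essentially that the chosen cut points are \emph{nonparabolic} (so that stabilizers of cut-point vertices are finite or loxodromic, and Levitt's theorem applies to the finitely generated peripheral subgroups), and this is exactly what fails for a parabolic cut point orbit. So the sentence in your last paragraph asserting that the construction of a peripheral splitting from a parabolic cut point needs no extra hypotheses is wrong on the decisive point: it needs local connectedness.

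The correct order, and the one the paper follows, is the reverse: first prove local connectedness, using only the absence of nonparabolic cut points. If $M$ were not locally connected, \cite[Thm.~1.1]{Bowditch99_Boundaries} gives a separating horoball in the cusped space, whose basepoint is a cut point by the Bestvina--Mess argument \cite{BestvinaMess91}; Proposition~\ref{prop:NoSplitCutPoint} forces this cut point to be parabolic, and then \cite[Thm.~1.2]{Bowditch99_Boundaries} converts the parabolic-based separating horoball into a splitting over a parabolic subgroup relative to $\P$, a contradiction. Only now, with $M$ a Peano continuum, does \cite[Prop.~5.1]{Bowditch99_Boundaries} apply to exclude parabolic cut points. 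Relatedly, the ``criterion'' you invoke for your final step---that a continuum with a minimal convergence action and no global cut point is locally connected---is not a theorem in that generality; the actual statement is specific to boundaries realized via the cusped space and proceeds through separating horoballs as above, so even that step should be phrased as in \cite{Bowditch99_Boundaries} rather than as a general convergence-group fact.
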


\begin{proof}
Since $\Gamma$ is finitely generated, $M$ is the Gromov boundary of a certain $\delta$--hyperbolic space $X$ that has constant horospherical distortion in the sense of Bowditch \cite{Bowditch99_Boundaries}.
In particular, $M$ is metrizable.

Suppose $M$ is not locally connected. 
Since $M$ is connected, \cite[Thm.~1.1]{Bowditch99_Boundaries} implies that the space $X$ has a separating horoball based at a point $\eta\in M$. 
The base point of any separating horoball in $X$ is a cut point of $M=\boundary X$ (see \cite[Prop.~3.4]{HruskaRuane_Hyperbolic}).
By Proposition~\ref{prop:NoSplitCutPoint}, the cut point $\eta$ is a parabolic point of $M$.
However, since $\Gamma$ does not split over a parabolic subgroup relative to $\P$, the space $X$ cannot have a separating horoball based at a parabolic point \cite[Thm.~1.2]{Bowditch99_Boundaries}.
We conclude that $M$ is locally connected, so that $M$ is a Peano continuum.

Finally, if the Peano continuum $M$ had a cut point $\eta$, then by
Proposition~\ref{prop:NoSplitCutPoint} 
such a point would be parabolic. By \cite[Prop.~5.1]{Bowditch99_Boundaries} the space $X$ would contain a separating horoball based at $\eta$, which is impossible.
\end{proof}

The proof of Theorem~\ref{thm: intro main} combines Proposition~\ref{prop:PeanoNoCutPoint} and Theorem~\ref{thm: localconnectednessInfiniteCase}.

\begin{proof}[Proof of Theorem~\ref{thm: intro main}]
One-endedness implies that $M$ is connected.
Consider the JSJ tree of cylinders $T_{JSJ}$ for $\Gamma$ over parabolic subgroups relative to $\P$ given by Proposition~\ref{prop:JSJfinitelygenerated}.
For each vertex $v \in V_0(T_{JSJ})$, the stabilizer $\Gamma_v$ is hyperbolic relative to $\Q_v$ and does not split relative to $\P$ over any finite or parabolic subgroup.
By Proposition~\ref{prop:PeanoNoCutPoint}, the boundary $\boundary(\Gamma_v,\Q_v)$ is a Peano continuum with no cut points.
Thus, by Theorem~\ref{thm: localconnectednessInfiniteCase}, the boundary $M$ is locally connected.
We also conclude that $M$ has a cut point if and only if the tree $T_{JSJ}$ is nontrivial, which happens if and only if $\Gamma$ splits relative to $\P$ over a parabolic subgroup.

Recall that $M$ is the limit of the tree system $\Theta$ associated to $T_{JSJ}$ by Proposition~\ref{prop:TreeOfSpaces}.
We show that a nonparabolic point of $M$ cannot be a cut point, using the notation from Section~\ref{sec:BoundaryTreeSystem}.
For each $v \in V_0(T_{JSJ})$, the boundary $M$ maps onto $M_v = \boundary(\Gamma_v,\Q_v)$ by a retraction that collapses to a point each halfspace $H(\vec{e}\,)$ not containing $M_v$.
It follows that any nonparabolic cut point of $M$ contained in $M_v$ would also be a cut point of $M_v$. Thus no such cut point exists.
The subspace $\bigcup_v M_v$ is a dense connected subspace of $M$ since it contains the vertex set $V(K)$, where $K$ is any fine hyperbolic graph $K$ with a relatively hyperbolic action compatible with $T_{JSJ}$ in the sense of Definition~\ref{defn:Compatible}. If $\xi$ is in its complement, then
\[
   {\textstyle \bigcup_v M_v \subset M\setminus\{\xi\} \subset \overline{\bigcup_v M_v}=M.}
\]
Thus $M-\{\xi\}$ is connected and $\xi$ is also not a cut point of $M$.
\end{proof}

We conclude by observing that the proof of Theorem~\ref{thm: intro main} also establishes Theorem~\ref{thm: intro cut point}.

\bibliographystyle{alpha}
\bibliography{relative}

\end{document}